\documentclass[11pt,reqno]{amsart}
\usepackage{hyperref}
\usepackage{geometry}
\usepackage{enumitem}
\usepackage{amsmath}
\usepackage{amssymb}
\usepackage{amsthm}
\usepackage{amsrefs}
\usepackage{amsfonts}
\usepackage[dvipsnames]{xcolor}
\usepackage{mathtools}
\usepackage{stackengine}
\usepackage{verbatim}
\usepackage{tikz-cd} 
\usepackage{mathrsfs}
\usepackage{bbm}
\usepackage{float}

\usepackage{color}
\definecolor{red}{rgb}{1,0,0}
\def\red{\color{red}}
\definecolor{green}{rgb}{0.0,0.8,0.0}

\definecolor{blue}{rgb}{0,0,1}

\definecolor{black}{rgb}{0,0,0}

\definecolor{grey}{rgb}{0.333,0.333,0.333}

\definecolor{lightgrey}{rgb}{0.666,0.666,0.666}

\definecolor{white}{rgb}{1,1,1}

\numberwithin{equation}{section}

\makeatletter
\@namedef{subjclassname@2020}{%
  \textup{2020} MSC}
\makeatother

\hypersetup{
	colorlinks   = true, 
	urlcolor     = blue, 
	linkcolor    = purple, 
	citecolor   = blue 
}

\def\XXint#1#2#3{{\setbox0=\hbox{$#1{#2#3}{\int}$ }
\vcenter{\hbox{$#2#3$ }}\kern-.6\wd0}}

\makeatletter
\newcommand*{\rom}[1]{\expandafter\@slowromancap\romannumeral #1@}

\newcommand{\ind}{\protect\raisebox{2pt}{$\chi$}}

\newcommand{\SL}{\operatorname{SL}}

\newcommand{\X}{\mathcal{X}}
\newcommand{\e}{\epsilon}
\newcommand{\cD}{\mathcal{D}}
\newcommand{\cF}{\mathcal{F}}
\newcommand{\N}{\mathbb{N}}
\newcommand{\R}{\mathbb{R}}
\newcommand{\Z}{\mathbb{Z}}
\newcommand{\bfe}{\mathbf{e}}

\newcommand{\Prob}{\text{Prob}}
\newcommand{\supp}{\text{supp}}

\newcommand{\id}{\mathbbm{1}}
\newcommand{\zer}{{0}}
\newcommand{\cE}{\mathscr{E}}

\newcommand{\cS}{\mathcal{S}}
\newcommand{\cG}{\mathcal{G}}
\newcommand{\tnu}{\widetilde{\nu}}

\newcommand{\Q}{\mathbb{Q}}

\newcommand{\cP}{\mathcal{P}}
\newcommand{\cB}{\mathcal{B}}
\newcommand{\vol}{\operatorname{vol}}
\newcommand{\cA}{\mathcal{A}}
\newcommand{\T}{\mathbb{T}}

\newcommand{\zA}{\mathcal{Z}(\cA)}
\newcommand{\cZ}{\mathcal{Z}}

\newcommand{\tXn}{\widetilde{\X}_{n+1}}
\newcommand{\tHn}{\widetilde{H}_{n+1}}

\begin{document}
\theoremstyle{plain}
\newtheorem{thm}{Theorem}[section]
\newtheorem{lem}[thm]{Lemma}
\newtheorem{prop}[thm]{Proposition}
\newtheorem{cor}[thm]{Corollary}
\newtheorem{question}{Question}
\newtheorem{con}{Conjecture}
\theoremstyle{definition}
\newtheorem{defn}[thm]{Definition}
\newtheorem{exm}[thm]{Example}
\newtheorem{nexm}[thm]{Non Example}
\newtheorem{prob}[thm]{Problem}

\theoremstyle{remark}
\newtheorem{rem}{Remark}[section]

\def\one{\mathbbm{1}}

\def \toprob {\,\,\buildrel\Prob\over\longrightarrow\,\,}
\def \toas {\,\,\buildrel\text{\rm a.s.}\over\longrightarrow\,\,}
\def \todist {\,\,\buildrel\text{\rm d}\over\longrightarrow\,\,}
\def \tovague {\,\,\buildrel\text{\rm v}\over\longrightarrow\,\,}
\def \toweak {\,\,\buildrel\text{\rm w}\over\longrightarrow\,\,}
\def \eqdist {\buildrel\text{\rm d}\over =}

\title[Fine-scale statistics for $\mathbb{Q}^n$]{Fine-scale statistics for $\mathbb{Q}^n$}

\author{Gaurav Aggarwal}
\address{\textbf{Gaurav Aggarwal} \\
Institut f\"ur Mathematik, Universit\"at Z\"urich, 8057 Z\"urich, Switzerland}
\email{gaurav.aggarwal@math.uzh.ch}

\author{Anish Ghosh}
\address{\textbf{Anish Ghosh} \\
School of Mathematics,
Tata Institute of Fundamental Research, Mumbai, India 400005}
\email{ghosh@math.tifr.res.in}

\author{Jens Marklof}
\address{\textbf{Jens Marklof} \\
School of Mathematics,
University of Bristol, BristolBS8 1UG, U.K.}
\email{j.marklof@bristol.ac.uk}

\date{}

\subjclass[2020]{}
\keywords{}


\begin{abstract}  
We study the distribution of rational points in $\mathbb{R}^n$, with denominators restricted to the interval $[Q-\Delta, Q]$, and $Q,\Delta\to\infty$ such that $\Delta/Q\to 0$. Previous results in the literature, due to Hall and others, were limited to Farey sequences, where the window size $\Delta$ is of the same order as $Q$. We prove the convergence of fine-scale statistics in a range of scaling limits and express the limit laws in terms of natural probability measures on the space of affine lattices. The key technical ingredient of our approach is an equidistribution theorem for slowly expanding horospheres, with some new exotic limit measures. Our techniques furthermore allow us to answer a recent question by Anderson, Boca, Cobeli and Zaharescu concerning the directional statistics of lattice points. 
\end{abstract}

\maketitle


\section{Introduction}

The objective of the present study is to establish new limit theorems for the local statistics of rational points in $\Q^n$. We will be interested in the fine-scale distribution of the multiset
\begin{align}\label{FQ}
    \cF_{Q, \Delta} = \left\{ \frac{p}{q} : p \in \Z^n,\; q\in [Q-\Delta, Q]\cap\N \right\},
\end{align}
where $n\in\N$ and $0\leq \Delta\leq Q$ with $Q$ large. ``Multiset'' means that we record points with their multiplicity. For example, ``0'' would be counted with multiplicity equal to the number of integers in the interval $[Q-\Delta, Q]$. In the case $Q=\Delta$, the set $\cF_{Q,Q}$ yields the Farey points of level $Q$ (if we only consider reduced fractions) whose gap distribution was studied by Hall \cite{Hall1970} in dimension $n=1$; see also \cites{KZ1997,BCZ2001,BZ2005} for other local statistics and \cite{Marklof2013} for results in general dimension. These techniques extend also to the case when $\Delta=c Q$ for any fixed $c\in(0,1]$. We will therefore focus on limits when $\Delta$ is significantly smaller than $Q$. 
In this scaling, it is natural to consider all rationals, i.e., not just reduced fractions with $\gcd(p,q)=1$, since denominators are restricted to a small window and the occurrence of multiplicity is rare. The fine-scale statistics of \eqref{FQ} with the additional assumption $\gcd(p,q)=1$ is more complicated; we will discuss this in a future study. 
The standing assumption for our investigation is therefore that
\begin{equation}\label{scalim}
Q\to\infty, \qquad \Delta\to\infty, \qquad \frac{\Delta}{Q} \to 0.
\end{equation}

 The case of finite $\Delta$ is elementary in the present setting, since the local statistics are given by a superposition of rigid lattices. This changes if we impose $\gcd(p,q)=1$. In this case, in dimension $n=1$ and $\Delta=0$ (with $Q\in\N$), Hooley \cites{Hooley1,Hooley2,Hooley3,Hooley4} proved that the gap statistics is Poissonian when $Q/\varphi(Q)\to\infty$, where $\varphi$ is Euler's totient function.

\subsection{Gap statistics}\label{sec1.1}

We start our discussion in dimension $n=1$ with one of the most popular tests of pseudo-randomness of a deterministic sequence: the gap distribution. 
We restrict $\cF_{Q, \Delta}$ to the interval $\cD_{\e,\alpha}=\alpha+\e\cD$, for a given bounded interval $\cD\subset\R$ of length $|\cD|\neq 0$, and $\epsilon$ either fixed, or tending to zero with $\e Q \to \infty$. We keep $\alpha$ fixed throughout.
Let us label the elements of the multiset $\cF_{Q, \Delta}\cap \cD_{\epsilon,\alpha}$ by
\begin{equation}\label{xis}
\xi_1 \leq \xi_2 \leq \ldots \leq \xi_N . 
\end{equation}
A counting argument shows that
\begin{align}\label{Ntot}
  N= |\cF_{Q, \Delta}\cap \cD_{\epsilon,\alpha} | = \sum_{Q-\Delta \leq q \leq Q}  \big(\epsilon q |\cD| + O(1) \big) = \epsilon Q\Delta |\cD| + O(\epsilon \Delta^2 + \Delta).
 \end{align}
Thus the average gap between consecutive $\xi_j$ is asymptotic to $(Q\Delta)^{-1}$.
Furthermore, assuming \eqref{scalim}, we have for any interval $\cA\subset\cD$,
\begin{align}\label{asymp dim 1}
    \frac{|\cA_{\epsilon,\alpha} \cap\cF_{Q, \Delta}|}{|\cD_{\epsilon,\alpha} \cap\cF_{Q, \Delta}|} \to  \frac{|\cA|}{|\cD|} .
\end{align}
This in turn implies that
\begin{equation}
\frac{|\{j\leq N: \xi_j \in \cA_{\epsilon,\alpha}\}|}{N} \to \frac{|\cA|}{|\cD|} . 
\end{equation}
That is, the $\xi_j$ are uniformly distributed on all scales that are large compared to $Q^{-1}$. 

The gap distribution of $(\xi_j)_{j=1}^N$ is defined as
\begin{equation}
\label{eq:def PN}
P_N(s) =\frac{1}{N-1} \; \big|\big\{ j\leq N-1 :  \xi_{j+1} -\xi_j > (Q\Delta)^{-1} s \big\} \big| ,
\end{equation}
where we measure gaps on the scale of the asymptotic average gap size $(Q\Delta)^{-1}$. This scaling ensures that the expected value of $P_N$ is asymptotically one, i.e.,
\begin{equation}\label{expected}
\int_0^\infty P_N(s) ds =\frac{Q\Delta}{N-1} \sum_{j=1}^{N-1} (\xi_{j+1} -\xi_j) = \frac{Q\Delta}{N-1} (\xi_N -\xi_1) \to 1, 
\end{equation}
since uniform distribution implies that $\xi_N -\xi_1\sim \epsilon|\cD|$. We will prove the following limit theorem for the gap distribution.

 \begin{thm}
    \label{thm:main0}
    Let $\alpha \in \R$, $\cD\subset\R$ an interval of length $|\cD|\neq 0$, and $\sigma\in (0,\infty]$. Then there exist a continuous probability density $p$ on $\R_{\geq 0}$ and piecewise continuous probability densities $p_{0,\cD}$ on $\R_{\geq 0}$ such that, for any sequence of $(Q,\Delta,\epsilon)$ satisfying \eqref{scalim} and $\e \Delta\to \sigma$, we have:

\begin{enumerate}
\renewcommand{\labelenumi}{\rm (\roman{enumi})}
\item 
If $\alpha=p/q\in\Q$ with $\gcd(p,q)=1$ and $\sigma<\infty$, then
\begin{equation}
 P_N(s) \to 
 \int_s^{\infty} p_{0,q\sigma\cD}(t) dt
 \end{equation}

\item
If $\alpha\notin\Q$ or $\sigma=\infty$, then
\begin{equation}
 P_N(s) \to 
 \int_s^{\infty} p(t) dt .
 \end{equation}
 \end{enumerate}
\end{thm}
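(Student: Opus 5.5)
We plan to derive Theorem~\ref{thm:main0} from an equidistribution statement on the space of affine lattices, via the standard reduction of gap statistics to void probabilities. First we rewrite the point set. A point $p/q\in\cF_{Q,\Delta}$ with $p/q\in\cD_{\e,\alpha}$ corresponds to the integer vector $(p,q)\in\Z^2$, and the rescaled gap statistics involve the quantities $Q\Delta(p/q-\alpha)$ and $(Q-q)/\Delta$. Put $u=(Q-q)/\Delta\in[0,1]$. Then $Q\Delta(p/q-\xi)\approx \Delta(p-q\xi)$ to leading order, because $q/Q=1-u\Delta/Q\to1$. So for a test point $\xi=\alpha+\e x$ with $x\in\cD$, the points of $\cF_{Q,\Delta}$ within distance $s(Q\Delta)^{-1}$ of $\xi$ correspond to the points of $\Z^2 g_{Q,\Delta}n(\xi)$ lying in a fixed compact set $\{(y,u): |y|<s,\ u\in[0,1]\}$. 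Here $n(\xi)=\begin{pmatrix}1&0\\-\xi&1\end{pmatrix}$ is a horospherical translate and $g_{Q,\Delta}=\operatorname{diag}(\Delta,\Delta^{-1})$, followed by a translation by $Q/\Delta$ in the second coordinate; the translation is what makes the lattice affine. The first step is therefore to show that $P_N(s)$ equals, up to $o(1)$, minus the derivative of the void probability
\[E(s)=\frac{1}{|\cD|}\bigl|\{x\in\cD: (\text{affine lattice at }\alpha+\e x)\cap C_s=\emptyset\}\bigr|,\]
with $C_s$ the window above. The relation $P_N(s)=-E'(s)$ (suitably normalized) is the usual Hall-type argument. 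We apply it to the limiting functions and pass to the limit using monotonicity in $s$.

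The second step, which is the core, is an equidistribution theorem for the expanding horosphere pieces $\{\Gamma\, n(\alpha+\e x)\, a_{Q,\Delta} : x\in\cD\}$ in $\mathrm{ASL}(2,\Z)\backslash\mathrm{ASL}(2,\R)$. This is the "equidistribution for slowly expanding horospheres" announced in the abstract. The horospherical segment has length $\e\cD$ and is expanded by $\Delta^2$ in the relevant coordinate, so its effective length is about $\e\Delta\cdot\Delta$.

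If $\alpha$ is irrational or $\e\Delta\to\infty$, we expect convergence to the Haar measure on the space of affine lattices, which gives density $p$. The proof would first use Ratner/Shah-type equidistribution of long horocycles in $\SL(2,\Z)\backslash\SL(2,\R)$. It would then handle the torus fibre by showing that the translation $Q/\Delta$, which grows because $\Delta/Q\to0$, equidistributes the affine part. A Weyl-sum / van der Corput estimate along the horocycle should do this.

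If $\alpha=p/q$ and $\e\Delta\to\sigma<\infty$, the linear part collapses onto a closed-horocycle-type limit. The base point $n(p/q)$ is a rational cusp, and for the lattice $\Z^2 n(p/q)$ the vector $(q,p)$ maps to $(q,0)$-type short vectors. The limit then lives on a lower-dimensional orbit, parametrized by $x\in q\sigma\cD$ with a random torus translation. This gives the exotic measure and hence $p_{0,q\sigma\cD}$. Piecewise continuity comes from the finitely many jump configurations of the one-dimensional strip.

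The final step is to deduce convergence of $E(s)$ from the equidistribution. This requires checking that the boundary of $\{\Lambda: \Lambda\cap C_s=\emptyset\}$ has measure zero for each limit measure. For Haar measure this is standard. For the exotic measures we would verify it directly from the explicit parametrization, possibly excluding a countable set of $s$, which is harmless by monotonicity. We then show $N$ normalization via \eqref{Ntot} and establish continuity/density properties. I expect the main obstacle to be the equidistribution of the affine (torus) component. The expansion is only logarithmic-rate slow relative to $Q$, so we need an effective equidistribution rate in the linear part that beats the fibre oscillation. I would try to obtain this via spectral-gap effective equidistribution of horocycles combined with Fourier expansion in the torus variable.
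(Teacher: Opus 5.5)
Your overall architecture matches the paper's. You encode the points of $\cF_{Q,\Delta}$ near $\alpha+\e\xi$ as the affine lattice $a(\Delta)u(-\alpha-\e\xi)\Z^2+(0,-Q/\Delta)^{\top}$, prove convergence of the resulting measures on $\widetilde{\X}_2$ to Haar or to $\mu^{(\sigma)}_{\lambda,x}$, check null boundaries, and pass from voids to gaps via \eqref{eq:void to gap 1}. However, neither dynamical input is actually supplied, and the one you do specify would not work.

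For the linear part, ``Ratner/Shah-type equidistribution of long horocycles'' does not cover this regime. The piece $\{a(\Delta)u(-\alpha-\e\xi)\Gamma_2:\xi\in\cD\}$ has length $\e\Delta^2|\cD|\asymp\sigma\Delta$ on a closed horocycle of period $\Delta^2$. That is exactly the square-root scale. It lies outside Shah's theorem (fixed pieces) and outside Hejhal--Str\"ombergsson (length $\geq$ period$^{1/2+\delta}$), and at this scale the answer depends on the arithmetic of $\alpha$. The paper's key device is \eqref{eq: lem reduction 1}, which for $n=1$ reads
\[
a(\Delta)u(-\e\xi)=M_{\e\Delta\xi}\,u(\e\xi)\begin{pmatrix}1&0\\-\e^{-1}\xi^{-1}&1\end{pmatrix},\qquad M_c=\begin{pmatrix}0&-c\\c^{-1}&0\end{pmatrix}.
\]
So the segment is a bounded, $\xi$-dependent twist ($M_{\e\Delta\xi}\to M_{\sigma\xi}$, $u(\e\xi)\to\id_2$) of a piece of length $\asymp\e^{-1}$ of the lower-triangular unipotent orbit through the \emph{fixed} point $u(-\alpha)\Gamma_2$. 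That orbit is closed iff $\alpha\in\Q$, which is the source of the Haar/exotic dichotomy. The paper then applies the generalised Dani--Margulis Theorem~\ref{thm: gen Dani Margulis} together with the joint-equidistribution Proposition~\ref{prop: single to joint equi}. It reaches $\sigma=\infty$ only indirectly, via monotonicity in $\e$ (Corollary~\ref{cor: Gen Dan Mar}) and the diagonal argument of Proposition~\ref{prop: double}.

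The fibre step, which you single out as the main obstacle, is where your plan actually fails. You want an effective rate for the linear part that beats the torus oscillation, but no such rate exists in the generality of the theorem. In case (i) the linear part does not converge to Haar at all. In case (ii) with $\sigma<\infty$, its convergence is governed by the lower-unipotent orbit through $u(-\alpha)\Gamma_2$, which converges arbitrarily slowly for Liouville-type $\alpha$. And nothing links $\Delta$ to $Q$ beyond $\Delta/Q\to0$; there is no ``logarithmic'' relation.

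The paper needs no rate. Replacing $\xi$ by $\xi+\zeta/(\e\Delta Q)$ does not change the limit (Lemma~\ref{lem: reduction 1 0}), while
\[
\Bigl[a(\Delta)u\bigl(-\alpha-\e\xi-\tfrac{\zeta}{\Delta Q}\bigr),\bigl(\begin{smallmatrix}0\\-Q/\Delta\end{smallmatrix}\bigr)\Bigr]=\Bigl[u\bigl(-\tfrac{\zeta\Delta}{Q}\bigr),\bigl(\begin{smallmatrix}-\zeta\\0\end{smallmatrix}\bigr)\Bigr]\Bigl[a(\Delta)u(-\alpha-\e\xi),\bigl(\begin{smallmatrix}0\\-Q/\Delta\end{smallmatrix}\bigr)\Bigr],
\]
and $u(-\zeta\Delta/Q)\to\id_2$ precisely because $\Delta/Q\to0$. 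Hence every limit point is invariant under the horizontal translations $[\id_2,(y,0)^{\top}]$ (Lemma~\ref{lem:reduction 1 1}). Over a base lattice without horizontal vectors this action is uniquely ergodic, which gives Lebesgue measure on the fibres in case (ii).

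Be aware that in case (i) this alone does not suffice for $n=1$. Every lattice in the support of the limit contains a horizontal vector (for $\alpha=0$ it is $M_{\sigma\xi}(0,1)^{\top}=-(\sigma\xi,0)^{\top}$), so the horizontal flow on each fibre is periodic. One must also show that the transversal coordinate equidistributes jointly with the base parameters $(\xi,\{1/(\e\xi)\})$; for $\alpha=0$ that coordinate equals $\{-\e Q\xi\}$. This is a qualitative van der Corput bound for the phases $m_1\e Q\xi+m_2/(\e\xi)$, using only $\e Q\to\infty$ and $\e\to0$.

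The paper's Proposition~\ref{lem: reduction to X} passes over this point by asserting $H_{n+1}$-invariance of the base limit. In case (i) that limit is only invariant under $M_{\sigma\xi}H_2M_{\sigma\xi}^{-1}=\{u(s)\}$, which preserves horizontal vectors. So a Weyl-sum argument does have a place, but a qualitative one, not one driven by spectral-gap rates.
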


The two limit distributions in Theorem \ref{thm:main0} are compatible, in that we have
\begin{equation}
\lim_{\sigma\to\infty} \int_s^{\infty} p_{0,\sigma\cD}(t) dt = \int_s^{\infty} p(t) dt.
\end{equation}
We will prove that the limit density $p$ is given by the formula
\begin{equation}\label{p(s)}
p(s) = \tfrac{6}{\pi^2} \times
\begin{cases}
1 & (s\leq 1) \\
 \frac1s + 
2\,\Big(1-\frac1s\Big)^2 \log\Big(1-\frac1s\Big)  -\frac{1}{2}\, \Big(1-\frac2s\Big)^2 \log \Big| 1-\frac2s\Big|  & (s>1) ,
\end{cases}
\end{equation}
which is evidently independent of $\alpha,\sigma,\cD$.
The expected value is $\int_0^\infty s\, p(s) ds=1$, which is consistent with \eqref{expected}, and we have a heavy tail $p(s) \sim  \frac{4}{\pi ^2} s^{-3}$ for $s\to\infty$. A numerical illustration of Theorem \ref{thm:main0} (ii) is given in Figure \ref{fig1} for $\alpha\notin\Q$ and $\sigma<\infty$, and in Figures \ref{fig1B} for $\alpha\in\Q$ and $\sigma=\infty$.

The function $p(s)$ also describes the level spacing distribution for a two-dimensional quantum harmonic oscillator with random frequencies (or, equivalently) the gap distribution of the fractional parts of $n\omega$ mod 1 for random $\omega$ \cites{Greenman1996,PSZ2016}, as well as the distribution of free path lengths in the two-dimensional periodic Lorentz gas in the low-density limit \cites{Dahlqvist1997,BZ2007}. Ref.~\cite{Marklof2021} explains why the same limit density \eqref{p(s)} arises in these different settings.

\begin{figure}
\begin{center}
\includegraphics[width=0.5\textwidth]{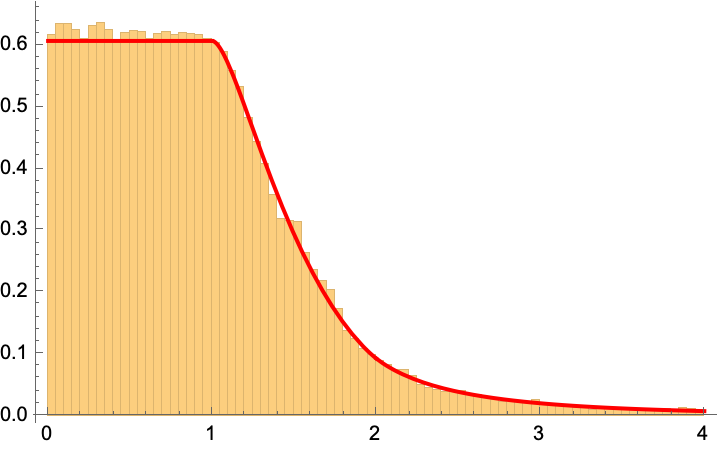}
\end{center}
\caption{Histogram of the gap distribution with $\alpha=\sqrt2-1$, $Q=10^5$, $\sigma=1$, $\cD=[1.2,1.7]$. The continuous curve is the limiting density $p(s)$.} \label{fig1}
\end{figure}

\begin{figure}
\begin{center}
\includegraphics[width=0.5\textwidth]{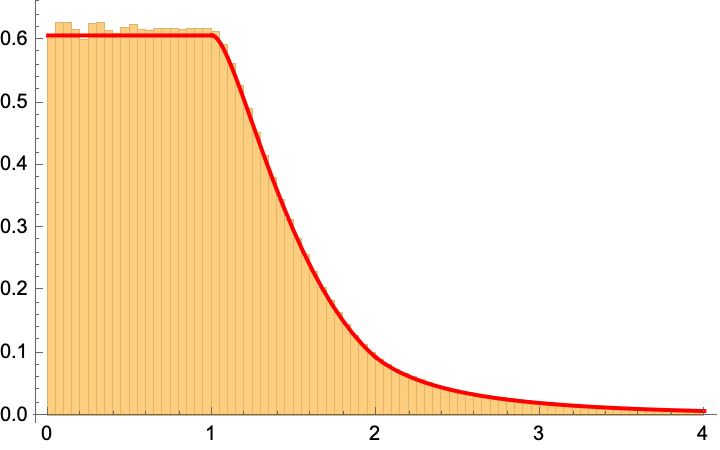}
\end{center}
\caption{Histogram of the gap distribution with $\alpha=0$, $Q=10^5$, $\sigma=10^3$, $\cD=[1.2,1.7]$. The continuous curve is the limiting density $p(s)$.} \label{fig1B}
\end{figure}

Let us turn to $p_{0,\cD}(s)$, where (in view of the reflection symmetry of $\cF_{Q,\Delta}$ about the origin) it is natural to assume $\cD\subset\R_{\geq 0}$. Consider the points $n\omega$ mod 1, $n=1,\ldots,k$, and denote by $s_{k,1}(\omega),\ldots,s_{k,k}(\omega)$ the $k$ gaps between consecutive points on $\R/\Z$; assume here $0<\omega<1$. 
For example, if $k=1$, we have $s_{1,1}(\omega)=1$, and for $k=2$, we have $s_{2,1}(\omega)=\omega$ and $s_{2,2}(\omega)=1-\omega$ (or vice versa). More generally, the classic three gap theorem tells us that, for each given $\omega$ and $k$, the set $\{ s_{k,j}(\omega) : j\leq k\}$ has at most three distinct elements; see \cite{MS2017} and references therein. 
We will show in Section \ref{sec:introproofs} that
\begin{equation}\label{p_alf}
p_{0,\cD}(s) = \frac{1}{s |\cD|} \sum_{k=1}^\infty \sum_{j=1}^k \int_0^1 \chi_{\cD}\big( s_{k,j}(\omega)^{-1} s \big) \max\big( 1 - |k-s_{k,j}(\omega)^{-1} s|, 0\big) d\omega,
\end{equation}
where $\chi_{\cD}$ is the characteristic function of the sets $\cD\subset\R_{\geq 0}$. Since $\cD$ is assumed to be bounded, so is $s_{k,j}(\omega)^{-1}s$ and the sum over $k$ has at most finitely many non-zero terms. This implies the piecewise continuity of $p_{0,\cD}(s)$. We also note that if $b=\sup \cD$, then $s\leq s_{k,j}(\omega) b\leq b$, and hence the support of $p_{0,\cD}$ is contained in $[0,b]$.

Using $s_{k,1}(\omega)+\ldots + s_{k,k}(\omega)=1$, formula \eqref{p_alf} yields for the expected value 
\begin{equation}\label{p_alf_expected}
\int_0^\infty s\, p_{0,\cD}(s)\, ds = \frac{1}{|\cD|}  \int_\cD \min(s , 1) \, ds .
\end{equation}
Thus the expected limiting gap is strictly less than $1$ if $\cD\cap[0,1]$ has positive Lebesgue measure, and is equal to one otherwise (recall we assume here $\cD\subset\R_{\geq 0}$). This in turn means that the limit of the expected gap for finite $N$ \eqref{expected} does in general not coincide with the expected value of the limit distribution, unlike for $p(s)$. The reason for this phenomenon is the occurrence of large gaps that are not statistically significant for convergence in distribution, but which are picked up by the expected value for each finite $N$, due to the higher weighting given to large gaps.

{\em Example 1.} $\cD=[a,b]$ with $0\leq a < b \leq 1$. Only the $k=1$ term contributes, and we have (recall that here $s_{1,1}(\omega)=1$)
\begin{equation}\label{p_alf1}
p_{0,\cD}(s) = \frac{1}{b-a} \, \chi_{[a,b]}(s).
\end{equation}

{\em Example 2.} $\cD=[a,b]$ with $1\leq a < b \leq 2$. Now the $k=1$ and $k=2$ terms contribute, and we have
\begin{equation}\label{p_alf2}
p_{0,\cD}(s) = \frac{2}{b-a} \times
\begin{cases}
\log\frac{b}{a} +\frac1b-\frac1a & (s\leq a) \\
\log\frac{b}{s} +\frac1b-\frac12 & (a< s < b) \\
0 & (s> b).
\end{cases}  
\end{equation}

\begin{figure}
\begin{center}
\includegraphics[width=0.5\textwidth]{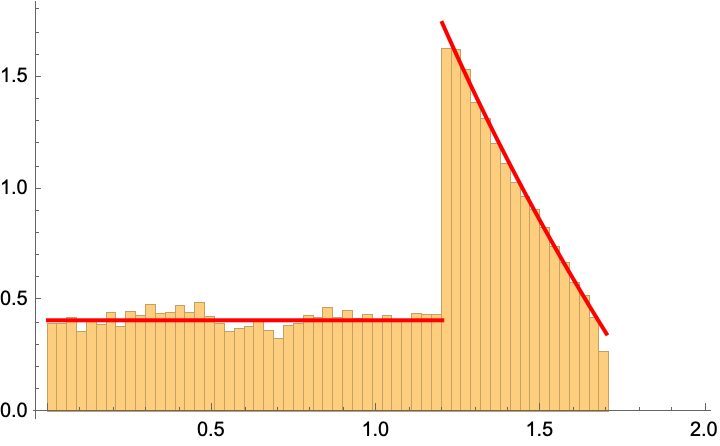}
\end{center}
\caption{Histogram of the gap distribution with $\alpha=0$, $Q=10^5$, $\sigma=1$, $\cD=[1.2,1.7]$. The continuous curve is the limiting density $p_{0,\cD}(s)$.} \label{fig2}
\end{figure}
\begin{figure}
\begin{center}
\includegraphics[width=0.5\textwidth]{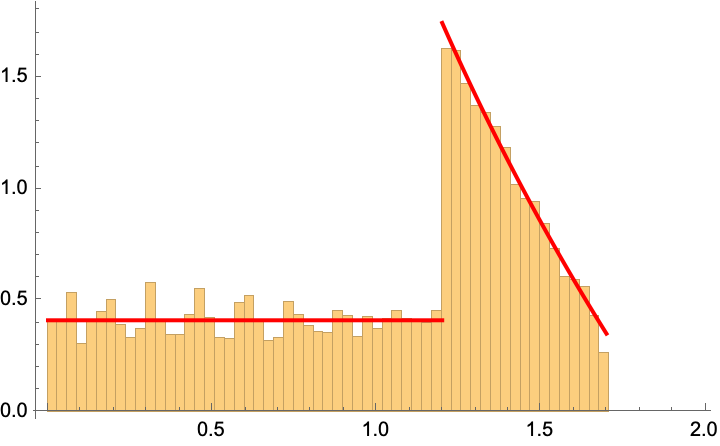}
\end{center}
\caption{Histogram of the gap distribution with $\alpha=3/5$, $Q=10^5$, $\sigma=1$, $\cD=[0.24,0.34]$. The continuous curve is the limiting density $p_{0,5\cD}(s)$.} \label{fig2.1}
\end{figure}

{\em Example 3.} $\cD=[a,b]$ with $0\leq a \leq 1 < b \leq 2$. The distribution is now a combination of the distributions from Example 1 (for the interval $[a,1]$) and Example 2 (for the interval $[1,b]$). We have
\begin{equation}\label{p_alf3}
p_{0,\cD}(s) = \frac{2}{b-a} \times
\begin{cases}
\log b +\frac1b-1 & (s \leq a) \\
\log b +\frac1b-\frac12 & (a< s\leq 1) \\
\log\frac{b}{s} +\frac1b -\frac12 & (1< s \leq b) \\
0 & (s> b).
\end{cases}  
\end{equation}

\begin{figure}
\begin{center}
\includegraphics[width=0.5\textwidth]{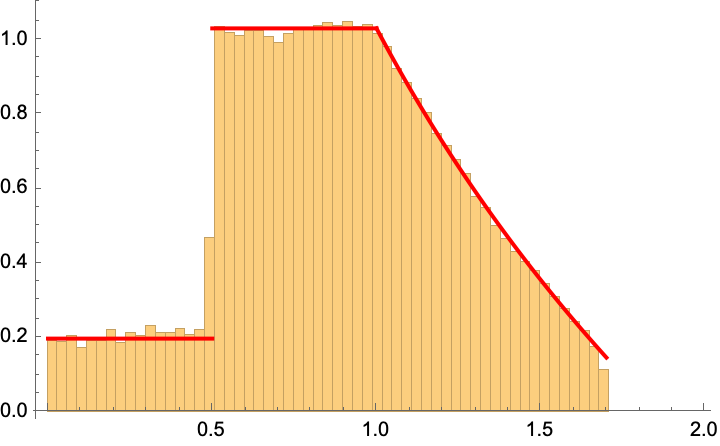}
\end{center}
\caption{Histogram of the gap distribution with $\alpha=0$, $Q=10^5$, $\sigma=1$, $\cD=[0.5,1.7]$. The continuous curve is the limiting density $p_{0,\cD}(s)$.} \label{fig3}
\end{figure}
\begin{figure}
\begin{center}
\includegraphics[width=0.5\textwidth]{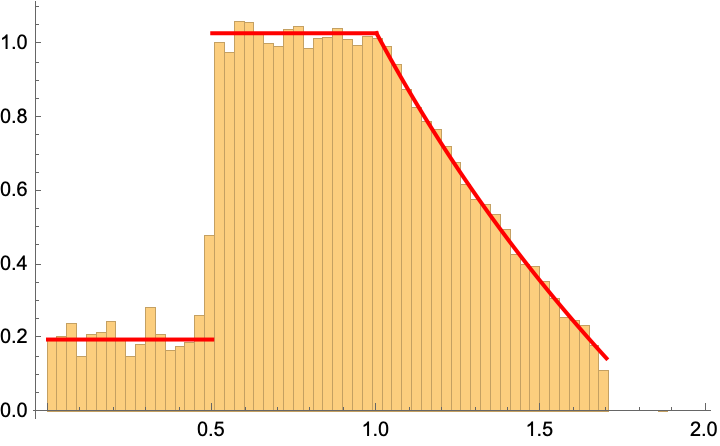}
\end{center}
\caption{Histogram of the gap distribution with $\alpha=3/5$, $Q=10^5$, $\sigma=1$, $\cD=[0.1,0.34]$. The continuous curve is the limiting density $p_{0,5\cD}(s)$.} \label{fig3.1}
\end{figure}

The limit distributions in Theorem~\ref{thm:main0}, for $\alpha=0$ and $\sigma<\infty$ finite, also arise in the gap distribution of angles between two-dimensional lattice points observed by a ``fast-moving'' observer, as studied by Anderson, Boca, Cobeli and Zaharescu \cite{ABCZ}. Theorem~\ref{thm:main0} for $\sigma=\infty$ in fact allows to extend the scaling limits and thus answers a question posed in \cite{ABCZ}. We will return with more details on this point at the end of this introductory section. 

In the present study we will not discuss scaling limits corresponding to $\sigma=0$, except to say that in the case $\alpha=0$ almost all gaps accumulate at zero, so that
\begin{equation}\label{zerosigma}
P_N(s) \to 0
\end{equation}
for all $s> 0$. That is, the gap distribution converges weakly to the Dirac mass $\delta_0$ at zero. We will explain this at the end of Section \ref{sec:introproofs}.

\subsection{Local statistics in higher dimension}
\label{subsec:Local statistics in higher dimension}
Although there are natural generalisations of gap statistics or nearest-neighbour distributions in higher-dimensional settings, it is more convenient to study a different fine-scale statistics, namely the distribution of points in small, randomly placed test sets. We will then extend this to express the fine-scale distributions of $\cF_{Q, \Delta}$ in terms of point processes.

As in dimension one, we restrict the rationals in the multiset $\cF_{Q, \Delta}$ to $\cD_{\epsilon,\alpha}$, the $\epsilon$-dilated and $\alpha$-translated copy of a given set $\cD\subset\R^n$, i.e.,
\begin{equation}
\cD_{\epsilon,\alpha} = \alpha+\epsilon\cD = \{ x \in\R^n : \epsilon^{-1} (x-\alpha)\in\cD \},
\end{equation}
where $\epsilon>0$ (fixed or tending to zero) and $\alpha\in\R^n$ (fixed).
We assume in the following that $\cD$ is bounded with boundary of Lebesgue measure zero and non-empty interior. 

A lattice point counting argument shows that, in the limit \eqref{scalim}, we have
\begin{align}
  |\cF_{Q, \Delta}\cap \cD_{\epsilon,\alpha} | \sim \epsilon^n \vol(\cD) \sum_{Q-\Delta \leq q \leq Q}  q^n  \sim \epsilon^n \vol(\cD)  Q^n \Delta .
 \end{align}
This implies that, for any bounded $\cA\subset\cD$ with boundary of measure zero,
\begin{align}\label{asymp}
    \frac{|\cA_{\epsilon,\alpha} \cap\cF_{Q, \Delta}|}{|\cD_{\epsilon,\alpha} \cap\cF_{Q, \Delta}|} \to  \frac{\vol(\cA)}{\vol(\cD)} .
\end{align}
As in the one-dimensional setting, this means that the points in $\cD_{\epsilon,\alpha} \cap\cF_{Q, \Delta}$ are uniformly distributed in $\cD_{\epsilon,\alpha}$ with respect to the Lebesgue measure.

To test the pseudo-randomness properties of $\cF_{Q, \Delta}$, we will now go beyond the above scales and test the distribution on ``microscopic'' scales comparable to the average separation between the points in $\cF_{Q, \Delta}$. This average separation is, in view of uniform distribution \eqref{asymp} and the fact that $\vol(\cD_{\epsilon,\alpha}) = \epsilon^n \vol(\cD)$, asymptotic to $\eta_{Q,\Delta}=Q^{-1} \Delta^{-1/n}$.
We are interested in the number of points of $\cF_{Q,\Delta}$ in a microscopic test set of size $\eta_{Q,\Delta}$. In this scaling, we can expect only a finite number of points to fall in the scaled test set. We now translate the microscopic test set by a random vector $\xi$, and ask for the probability that the scaled and randomly shifted set contains exactly a given number of points. 
That is, for $k \in \Z_{\geq 0}$ and $\cA,\cD\subset \R^n$ bounded, with boundary of Lebesgue measure zero and non-empty interior, define
\begin{align}
\label{eq: def 1 P}
    E_Q(k,\cA)= \frac{\vol\left(\left\{ \xi \in \cD_{\epsilon,\alpha} : |(\xi+\eta_{Q,\Delta} \cA) \cap \cF_{Q, \Delta}\}|= k\right\} \right)}{\vol(\cD_{\epsilon,\alpha})}.
\end{align}
Here $E_Q(k,\cA)$ depends of course also on $\alpha,\epsilon,\cD,\Delta$.
For the expected value of \eqref{eq: def 1 P} we have 
\begin{equation}
\sum_{k=1}^\infty k E_Q(k,\cA)  = \frac{1}{\vol(\cD_{\epsilon,\alpha})} \int_{\cD_{\epsilon,\alpha}}  |(\xi+ \eta_{Q,\Delta} \cA ) \cap \cF_{Q,\Delta}| \, d\xi\to 1,
\end{equation}
where the convergence to 1 follows from uniform distribution \eqref{asymp}.

The following proves the existence of limit distributions in various scaling limits.

\begin{thm}
    \label{thm:main}
    Let $\alpha \in \R^n$, $\cA$, $\cD\subset\R^n$ bounded with boundary of Lebesgue measure zero, and $\sigma\in (0,\infty]$. Then there exist  probability distributions $E(k,\cA)$ and $E_{0,\cD}(k,\cA)$ such that, for any $k\in\Z_{\geq 0}$, and any sequence of $(Q,\Delta,\epsilon)$ satisfying \eqref{scalim}, and $\e \Delta^{1/n}\to \sigma$, we have:
    
\begin{enumerate}
\renewcommand{\labelenumi}{\rm (\roman{enumi})}
\item 
 If $\alpha=p/q\in\Q^n$ with $\gcd(p,q)=1$ and $\sigma<\infty$, then
\begin{equation}
 E_Q(k,\cA) \to  E_{0,q\sigma\cD}(k,\cA) . 
 \end{equation}

\item
If $\alpha\notin\Q^n$ or $\sigma=\infty$, then
\begin{equation}
 E_Q(k,\cA) \to E(k,\cA) .
\end{equation}
\end{enumerate}
\end{thm}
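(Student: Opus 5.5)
We reduce the counting problem to an equidistribution statement on the space of affine lattices and then pass to the limit using the equidistribution theorem for slowly expanding horospheres.

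\textbf{Lattice reformulation.} Write $\xi=\alpha+\epsilon x$ with $x\in\cD$. A point $p/q$ lies in $\xi+\eta_{Q,\Delta}\cA$ exactly when
\begin{equation*}
(p-q\xi)\,q^{-1}\eta_{Q,\Delta}^{-1}\in\cA .
\end{equation*}
Since $q\in[Q-\Delta,Q]$ and $\Delta/Q\to0$, we have $q^{-1}\eta_{Q,\Delta}^{-1}=Q^{-1}\eta_{Q,\Delta}^{-1}(1+o(1))=\Delta^{1/n}(1+o(1))$. The points counted are therefore, up to a small dilation error, the points $(\Delta^{1/n}(p-q\xi),\,(Q-q)/\Delta)$ of the lattice $\Z^{n+1}$ that lie in the cylinder $\cA\times[0,1]$. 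Set
\begin{equation*}
n(x)=\begin{pmatrix}1_n&-x\\0&1\end{pmatrix},\qquad a_\Delta=\mathrm{diag}(\Delta^{1/n}1_n,\Delta^{-1}).
\end{equation*}
Then the counted points form the set $\Z^{n+1}n(\alpha+\epsilon x)a_\Delta$, shifted by $(0,\dots,0,-Q/\Delta)$ in the last coordinate, intersected with $\cA\times[0,1]$. This gives
\begin{equation*}
E_Q(k,\cA)=\frac{1}{\vol(\cD)}\,\vol\bigl\{x\in\cD:\ \#\bigl(\Z^{n+1}g_Q(x)+v_Q\bigr)\cap(\cA\times[0,1])=k\bigr\}+o(1),
\end{equation*}
where $g_Q(x)=n(\alpha)n(\epsilon x)a_\Delta$ and $v_Q=(0,-Q/\Delta)$. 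The vector $v_Q$ runs off to infinity, but it acts on $\Z^{n+1}$ only modulo the lattice. We therefore track the affine lattice $(\Z^{n+1}+ (0,\{Q\}\dots))g$ properly; equivalently, we work on $\mathrm{ASL}(n+1,\Z)\backslash\mathrm{ASL}(n+1,\R)$, where the translation part is pushed by the diagonal flow. The $o(1)$ dilation error is handled by sandwiching $\cA$ between slightly shrunk and enlarged sets, using that $\partial\cA$ has measure zero.

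\textbf{Equidistribution.} We then need the limit distribution of the pieces of slowly expanding horosphere $\{n(\alpha)n(\epsilon x)a_\Delta : x\in\cD\}$. Conjugating gives $n(\epsilon x)a_\Delta=a_\Delta\, n(\epsilon\Delta^{1/n}\Delta\, x)$, so the horosphere is expanded by the factor $\epsilon\Delta^{1+1/n}$. The relevant parameter is the size $\epsilon\Delta^{1/n}\to\sigma$ of the $x$-region measured against the translation scale. We split into three cases.
\begin{enumerate}
\item If $\alpha\notin\Q^n$, or if $\sigma=\infty$, the measures converge to the natural $\mathrm{ASL}(n+1,\R)$-invariant probability measure $\mu$ on the space of affine lattices. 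We then set $E(k,\cA)=\mu\{\Lambda:\#\Lambda\cap(\cA\times[0,1])=k\}$.
\item If $\alpha=p/q$ and $\sigma<\infty$, the lattice $\Z^{n+1}n(p/q)$ is commensurable with $\Z^{n+1}$. The horosphere piece then has finite $\sigma$-size in the direction transverse to the translations, and it equidistributes only in the fibres over the $x$-variable. The limit is an exotic measure that depends on $q\sigma\cD$; after rescaling $x\mapsto qx$ this produces $E_{0,q\sigma\cD}$.
\end{enumerate}
Here we invoke the paper's equidistribution theorem for slowly expanding horospheres, including its exotic limit measures.

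\textbf{Passing to the limit.} Since $\cA\times[0,1]$ has boundary of measure zero with respect to the limit measures, a standard Siegel-type argument shows that the counting function is continuous almost everywhere. A portmanteau argument then yields convergence for each fixed $k$. Because the expected value is asymptotically $1$ by uniform distribution, there is no escape of mass, and the limits $E(k,\cA)$ and $E_{0,\cD}(k,\cA)$ are probability distributions.

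\textbf{Main obstacle.} The main obstacle is the equidistribution step in the rational case with $\sigma<\infty$: identifying the exotic limit measure, and showing that it depends only on $q\sigma\cD$. Our first approach would be to reduce to $\alpha=0$ by using the congruence subgroup that stabilises $\Z^{n+1}n(p/q)$, and then to apply mixing of the diagonal flow on the translation torus fibres.
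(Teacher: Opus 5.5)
Your proposal is correct and follows the paper's own route. You rewrite $E_Q(k,\cA)$ as the $\tnu_{Q,x}$-measure of a lattice-point counting condition in $\tXn$, invoke Theorem~\ref{thm: main dynamical theorem} for the limit (Haar measure, or $\mu^{(q\sigma)}_{\lambda,\widetilde{\Gamma}_{n+1}}$ in the rational case with $\sigma<\infty$), and pass to the limit by a portmanteau argument, using that these limit measures are Lebesgue on the torus fibres so that affine lattices meeting the boundary of the cylinder form a null set.

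The only difference is cosmetic: you absorb the factor $q/Q=1+O(\Delta/Q)$ by sandwiching $\cA$ between shrunk and enlarged sets, whereas the paper builds it exactly into the moving set $\cZ(\cA,Q)$ of \eqref{eq: def C A Q} and handles it with Lemma~\ref{lem: main appendix}. Also note that your $n(x)$ and $a_\Delta$ are written for column vectors acting on the left, so $\Z^{n+1}n(\cdot)a_\Delta$ should read $a_\Delta n(\cdot)\Z^{n+1}$, and the conjugation identity should be $a_\Delta n(y)=n(\Delta^{1+1/n}y)a_\Delta$.
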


Following the strategy of \cite{Marklof2013} for Farey fractions, the first step of the proof of Theorem~\ref{thm:main} translates the question into an equidistribution problem for sequences of horospheres in the space of affine lattices. The new feature of our setting is that the horospheres expand at different rates, which depend on the choice of $\sigma$. We establish equidistribution in these scaling limits using measure rigidity techniques. A key ingredient here is a generalization of a theorem of Dani and Margulis~\cite{DM93} on the equidistribution of expanding translates of unipotent orbits, which is of independent interest. Dani and Margulis proved that averages along expanding segments of one-parameter unipotent orbits become equidistributed with respect to Haar measure on the ambient homogeneous space. Our result extends their theorem in two directions: first, from one-parameter to multi-parameter unipotent flows, and second, from individual initial points to varying probability measures. That is, we study the distribution of translates $u_t^{(k)}x$, where $x$ is sampled according to a sequence of measures $\mu_k$ and $t$ ranges over expanding cuboids in $\R^d$. Under suitable assumptions, we prove convergence to Haar measure. The precise statement is given in Section~\ref{sec: Gen Dan Mar}.

Our equidistribution theorem allows us to express the limiting fine-scale statistics in Theorem~\ref{thm:main} in terms of random lattices. In case (ii), we will show that
\begin{equation}\label{Estar}
E(k,\cA) = \mathbb{P}\big( | \Lambda \cap \cZ(\cA)| = k\big),
\end{equation}
where $\cZ(\cA)=\cA\times[-1,0]$ and $\Lambda\subset\R^{n+1}$ is a random affine lattice of co-volume one, distributed according to the Haar probability measure on the space of affine lattices (we will provide precise definitions in Section \ref{sec:Main}). Due to the translation invariance of the limit measure, the interval $[-1,0]$ can be replaced by any unit interval. 

To connect \eqref{Estar} with existing literature, define for $\xi>0$,
\begin{equation}\label{Estar2}
E(k,\cA,\xi) = \mathbb{P}\big( | \Lambda \cap \cZ(\cA,\xi)| = k\big),
\end{equation}
with $\cZ(\cA,\xi)=\cA\times[0,\xi]$. Then, by the previous remark, $E(k,\cA,1)=E(k,\cA)$. Furthermore, the $\SL(n+1,\R)$-invariance of the Haar probability measure on the space of affine lattices implies that, for $r>0$,
\begin{equation}\label{Estar3}
E(k,r\cA,\xi) = E(k,\cA,r^n \xi) .
\end{equation}
The distribution $E(k,\cA,\xi)$ arises in natural lattice point problems \cite[\S 3]{MarStro}.
In particular, if $\cA=\cB_1^n$ (the unit ball in $\R^n$), then $F(\xi)=E(0,\cB_1^n,\xi)$ describes the first hitting time distribution in the $(n+1)$-dimensional periodic Lorentz gas \cite[\S 4]{MarStro}. Precise tail asymptotics for the probability density $\Phi(\xi)=-F'(\xi)$, for small and large $\xi>0$, are given in \cite[Cor.~1.3 \& Thm.~1.13]{MarStroGAFA}.
These asymptotics translate to tail estimates for the limiting void density 
\begin{equation}
\rho_0(r) := -\frac{d}{dr}  E(0,\cB_r^n) 
\end{equation}
in our setting via the relation
\begin{equation}
\rho_0(r) =  -\frac{d}{dr} F(r^n) = n r^{n-1} \Phi(r^n),
\end{equation}
The asymptotics for $\Phi(\xi)$ also imply bounds for more general bounded test sets $\cA$ with non-empty interior. If $\cA$ contains a ball of radius $\e_1$ and is contained in a ball of radius $\e_2$, we have
\begin{equation}
E(0,\cB_{\e_2 r}^n) \leq E(0,r\cA) \leq E(0,\cB_{\e_1 r}^n)
\end{equation}
and therefore, for large $r$, 
\begin{equation}
E(0,r\cA) \asymp r^{-n} .
\end{equation}

In dimension $n=1$, it is a general fact \cite{Marklof2007} that the gap distribution converges if and only if the void distribution ($k=0$) does, and that the limits are related via the well known formula 
\begin{equation}
\label{eq:void to gap 1}
P(s) = - \frac{d}{ds} E(0,[0,s]) .
\end{equation}
Thus, the limiting gap density is given by
\begin{equation}
\label{eq:void to gap 2}
p(s) = - \frac{d}{ds} P(s) =  \frac{d^2}{ds^2} E(0,[0,s]) .
\end{equation}
Since the free flight distribution is the derivative of the first hitting time distribution, this in particular shows that the gap density $p(s)$ is the same as the free path density for the periodic Lorentz gas; see \cite{Marklof2021} for more details.

In the case (i) of Theorem~\ref{thm:main}, the limit distribution $E_{0,q\sigma\cD}(k,\cA)$ satisfies the same formula as \eqref{Estar}, except that $\Lambda=\Lambda_{0,q\sigma \cD}$ is distributed according to a different probability measure $\nu_{0,q\sigma\cD}$ on the space of affine lattice, which now depends on the choice of $\alpha$, $\sigma$ and $\cD$. The limit measure is non-standard and, although invariant under the horospherical action, it is not ergodic. It is furthermore invariant under the substitution $(\sigma,\cD)\mapsto (r\sigma,r^{-1}\cD)$ for any $r>0$.
In dimension $n=1$, with $\cD\subset\R_{\geq 0}$, we have for example
\begin{equation}\label{EkL}
E_{0,\cD}(k,\cA) = \mathbb{P}\big( | \Lambda_{0,\cD} \cap \cZ(\cA)| = k\big)
\end{equation}
with
\begin{equation}\label{LambdaAD}
\Lambda_{0,\cD} = a(t) u(\omega)(\Z^2+ z) ,\qquad a(t) = \begin{pmatrix}
    t & 0 \\ 0 & t^{-1}
\end{pmatrix}, \quad u(\omega) = \begin{pmatrix}
    1 & \omega \\ 0 &1
\end{pmatrix},
\end{equation}
where $t$ is a random variable uniformly distributed in $\cD$, $\omega$ is uniformly distributed in the unit interval $[0,1]$, and $z$ is uniformly distributed in the unit square $[0,1]^2$.

\subsection{Point processes}

Theorem \ref{thm:main} generalises to the convergence of point processes as follows. We can associate with the multiset $\cF_{Q,\Delta}$ the locally finite measure
\begin{equation}
X_Q = \sum_{q\in [Q-\Delta, Q]} \sum_{p \in \Z^n} \delta_{\eta_{Q,\Delta}^{-1}(q^{-1} p - \xi)} ,
\end{equation}
which can be viewed as a point process if $\xi$ is uniformly distributed in $\cD_{\epsilon,\alpha}$ as assumed above. Here $\delta_x$ denotes the Dirac mass at the point $x\in\R^n$. We will prove the following.

\begin{thm}
    \label{thm:main2}
    Let $\alpha \in \R^n$, $\cA$, $\cD\subset\R^n$ bounded with boundary of Lebesgue measure zero, and $\sigma\in (0,\infty]$. Then there exist point processes $X $ and $X_{0,\cD} $ such that for any sequence of $(Q,\Delta,\epsilon)$ satisfying \eqref{scalim}, and $\e \Delta^{1/n}\to \sigma$, we have:
\begin{enumerate}
\renewcommand{\labelenumi}{\rm (\roman{enumi})}
\item 
 If $\alpha=p/q\in\Q^n$ with $\gcd(p,q)=1$ and $\sigma<\infty$, then
\begin{equation}
 X_Q \todist  X_{0,q\sigma\cD}  .
 \end{equation}

\item 
If $\alpha\notin\Q^n$ or $\sigma=\infty$, then
\begin{equation}
 X_Q \todist X  .
\end{equation}
\end{enumerate}
\end{thm}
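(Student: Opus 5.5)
The plan is to follow the strategy of \cite{Marklof2013}. First we realise $X_Q$ as a point set of the form $\Lambda\cap$ (a slab), for a random affine lattice $\Lambda$ drawn from an expanding horosphere in the space of affine lattices. Then we deduce convergence in distribution from an equidistribution theorem for these horospheres.

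Concretely, write $\xi=\alpha+\epsilon x$ with $x$ uniform in $\cD$. The point $\eta_{Q,\Delta}^{-1}(p/q-\xi)$ lies in a bounded test set $\cA$ exactly when the vector $(p-q\xi,\,q)\in\Z^{n+1}$, after a suitable diagonal rescaling, lies in a set close to $\cZ(\cA)=\cA\times[-1,0]$. Set
\[
D_Q=\operatorname{diag}\bigl(Q\eta_{Q,\Delta}^{-1}\cdot Q^{-1},\dots,\Delta^{-1}\bigr),
\]
normalised to determinant one; in fact $(q/Q)\,\eta_{Q,\Delta}^{-1}(p/q-\xi)=\eta^{-1}_{Q,\Delta}Q^{-1}(p-q\xi)$, with the last coordinate $(q-Q)/\Delta\in[-1,0]$. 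Then $X_Q(\cA)$ equals, up to an error coming from the factor $q/Q=1+O(\Delta/Q)\to1$, the count $|\Z^{n+1}\,n_-(\xi)\,D_Q\cap \cZ(\cA)|$ in row convention. The translation by $Q$ in the last coordinate turns the lattice into an affine lattice. Here $n_-(\xi)$ denotes the unipotent matrix with $-\xi$ in the off-diagonal block.

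The error from $q/Q$ is handled by sandwiching $\cA$ between slightly shrunk and enlarged sets $\cA_\pm$. The joint counts $(X_Q(\cA_1),\dots,X_Q(\cA_m))$ are then functions of the random affine lattice $\Lambda_Q=(\Z^{n+1}+(0,\dots,0,-Q/\Delta)\cdot{}) \,n_-(\alpha+\epsilon x)D_Q$ with $x$ uniform in $\cD$. By the standard criterion (convergence of finite-dimensional distributions for sets with null boundary, \cite{Marklof2013}), the theorem reduces to showing that the law of $\Lambda_Q$ converges weakly to a measure $\nu$ on the space of affine lattices. We also need that $\nu$-almost surely no lattice point lies on $\partial\cZ(\cA)$, which follows from invariance of the limit under the horospherical translations in the $\R^n$ directions.

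The key step is identifying this limit. Splitting $n_-(\alpha+\epsilon x)=n_-(\alpha)n_-(\epsilon x)$ and commuting $n_-(\epsilon x)$ through $D_Q$ rescales $x$ by $\epsilon\Delta^{1/n}\to\sigma$. The orbit is therefore a horosphere of size $\sim\sigma$ based at the point $\Z^{n+1}n_-(\alpha)$ together with the shift determined by $Q/\Delta$, pushed by the diagonal flow $D_Q$.

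Case (ii) with $\sigma=\infty$ is the expanding regime. Case (ii) with $\alpha\notin\Q^n$ requires equidistribution of horospheres of fixed size based at a point which, under the flow, does not concentrate on a periodic orbit. In both cases I would invoke the generalised Dani--Margulis theorem of Section~\ref{sec: Gen Dan Mar}, with varying initial measures $\mu_k$ coming from the translation parameter $Q/\Delta$. This gives Haar measure, and hence the limit process $X=\Lambda\cap(\cdot\times[-1,0])$. Case (i) is different: for $\alpha=p/q$ the base point is periodic, $\Z^{n+1}n_-(p/q)$ differs from a $q$-rescaled standard lattice. A finite-size horosphere translated by the diagonal flow then equidistributes only in the fibre direction (via the affine translation, whose last coordinate $Q/\Delta$ varies and is equidistributed mod the lattice). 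This produces the non-ergodic measure $\nu_{0,q\sigma\cD}$, as in \eqref{LambdaAD}.

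I expect the main obstacle to be the rational case with finite $\sigma$. There one must prove convergence to the exotic measure, where the translation component becomes uniform on the torus while the linear part stays on an explicit compact family. The approach I would try first is Fourier expansion in the torus variable: nonzero characters give oscillatory integrals over $x\in\cD$ with phases growing like $Q/\Delta$ times a linear form, and these vanish in the limit. Uniformity near the boundary $\partial\cD$ and tightness (escape of mass) would be handled using Siegel-type bounds.
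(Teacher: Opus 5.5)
Your reduction to affine lattices is the same as the paper's (\eqref{eq: ww 3} and Lemma~\ref{lem: main appendix}). This covers the counts as $\widetilde\nu_{Q,x}$-measures of sets built from $\cZ(\cA_i,Q)$, the $q/Q$ sandwich, and finite-dimensional distributions. The genuine gap is the equidistribution step, Theorem~\ref{thm: main dynamical theorem}, which you never actually supply.

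\textbf{The scaling is off.} $a(\Delta)u(-\epsilon\xi)=u(-\epsilon\Delta^{1+1/n}\xi)\,a(\Delta)$, so the orbit is a horospherical box of side $\asymp\sigma\Delta$ at $a(\Delta)x$. It is not a fixed-size horosphere pushed by the flow. A fixed-size piece would equidistribute for \emph{every} base point by Margulis's thickening argument, which contradicts case (i). The regime $\sigma<\infty$ is exactly the critical scale where the limit depends on $x$.

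\textbf{Generalised Dani--Margulis cannot be applied to this box.} Its initial points $a(\Delta)x$ need not converge. For very well approximable $\alpha$ they escape to the cusp along some sequences, and for any $\alpha$ that is not badly approximable they can converge into $\cS(U)$. So the hypothesis $\mu_i\to\mu$ with $\mu(\cS(U))=0$ fails. The shift $Q/\Delta$ lives only in the torus fibre and does not help with this.

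\textbf{The missing idea is the factorisation \eqref{eq: lem reduction 1}.} It writes $a(\Delta)u(-\epsilon\xi)$ as the product of three factors:
\begin{itemize}
\item a factor converging to $M_{\sigma\xi}$;
\item a segment of length $\asymp\epsilon^{-1}\Delta^{(n-1)/n}\to\infty$ of the \emph{transverse} one-parameter unipotent group with $\beta$ in entry $(n+1,n)$;
\item the expanding $\SL_n(\R)\ltimes\R^n$-horosphere $a_n(\Delta^{(n-1)/n})u_n(-\zeta)x$.
\end{itemize}
The third factor equidistributes in $\widetilde{\X}_n$ for every $\alpha\notin\Q^n$ (Shah). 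This supplies initial measures whose limit gives $\cS(U)$ measure zero, and generalised Dani--Margulis then yields Haar measure with no Diophantine condition on $\alpha$. Proposition~\ref{prop: single to joint equi} decouples the slowly varying $M$-factor from the fast parameter.

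\textbf{$\sigma=\infty$ is not a direct application either.} The paper combines four ingredients: the finite-$\sigma$ results; the convergence of the finite-$\sigma$ limits to Haar as $\sigma\to\infty$; monotonicity in $\epsilon$ (Corollary~\ref{cor: Gen Dan Mar}, which starts Dani--Margulis at a generic point already reached by a smaller horosphere); and the diagonal argument of Proposition~\ref{prop: double}.

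\textbf{Case (i) for $n\ge2$.} Your description of case (i) is only right for $n=1$. For $n\ge2$ the linear part does not stay in a compact family. It equidistributes along $M_{\sigma\xi}\cdot H_{n+1}x$, which is a closed but non-compact orbit. Identifying this limit needs expanding-horosphere equidistribution in $\X_n(N)$ (Margulis thickening, as in the paper's Step C) in addition to a fibre argument. Siegel-type bounds give tightness but not the limit.

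\textbf{Boundary null sets.} Horizontal-translation invariance only shows that almost surely no point lies on $\partial\cA\times[-1,0]$. The faces $\cA\times\{0\}$ and $\cA\times\{-1\}$ require the translation part to be uniform on the whole fibre torus, as in Lemma~\ref{lem: zero measure of thin set}. For $n=1$ in case (i), every lattice in the support of the limit contains the horizontal vector $-(\sigma\xi,0)$. So horizontal invariance says nothing about the vertical phase. That uniformity must come from averaging a phase like $Q\epsilon\xi$ over $\xi$, which is where your Fourier idea is genuinely needed.
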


Convergence is defined in the vague topology on the space of locally finite Borel measures of $\R^n$.
The limiting processes are given by
\begin{equation}
\label{eq:defXpointprocess}
X  = \sum_{\substack{(x,y)\in \Lambda\\ -1\leq y\leq 0}} \delta_x , \qquad
X_{0,\cD}   = \sum_{\substack{(x,y)\in \Lambda_{0,\cD} \\ -1\leq y\leq 0}} \delta_x ,
\end{equation}
with the same random lattices $\Lambda$ and $\Lambda_{0,\cD}$ as above.

The proof of Theorem \ref{thm:main2} follows from the same equidistribution result, as above, for slowly expanding horospheres. There are two routes, either by applying the continuous mapping theorem (using the map from the space of affine lattices to the space of locally finite measures, cf.~\cite{MV2017}) or by proving convergence in finite dimensional distribution, i.e., by extending Theorem \ref{thm:main} to the joint distribution in several test sets $\cA_1,\ldots,\cA_r$, for any $r\in\N$. We will follow the latter route. 

\subsection{Directional statistics from a distant observer}\label{intro:ABCZ}

In a recent paper~\cite{ABCZ}, Anderson et al.\ studied the distribution of angles formed by rays from an observer to lattice points in an expanding square, where the distance between the observer and the square is significantly larger than the sides of the square. Their precise setting (using the notation of \cite{ABCZ}) is as follows. Given $\alpha>1$, $t>0$, assume the observer is located at
\begin{equation}
P_{t,J} = (-tJ^\alpha,0).
\end{equation}
Define the half-squares 
\begin{equation}
R = [-1,1] \times [0,1], 
\qquad 
R_J = J \cdot R = [-J,J] \times [0,J],
\end{equation}
and let
\begin{equation}
N=N_J = |R_J \cap \Z^2|.
\end{equation}
For each $P \in R_J \cap \Z^2$, consider the angle $\angle P_{t,J} P O$, with $O=(0,0)$, and order these angles as
\begin{equation}
0=\alpha_{J,1} \le \cdots \le \alpha_{J,N}.
\end{equation}
The average gap is
\begin{equation}
\Delta_{J,\mathrm{av}} = \frac{1}{N-1}(\alpha_{J,N} - \alpha_{J,1}),
\end{equation}
and define the gap distribution function
\begin{equation}
G_{t,J}(\xi) = \frac{\left| \left\{ j\leq N-1 : \alpha_{J,j+1} - \alpha_{J,j} > \xi \Delta_{J,\mathrm{av}} \right\} \right|}{N-1} , 
\quad \xi > 0.
\end{equation}

The following theorem yields a limit gap distribution for all $\alpha>1$ and hence extends the main result of \cite{ABCZ}, which was restricted to $\alpha=2$.

\begin{thm}\label{thm:ABCZ}
Let $\alpha>1$, $t>0$. For every $\xi > 0$, 
\begin{equation}
\lim_{J\to\infty} G_{t,J}(\xi) = G_t(\xi),
\end{equation}
where
\begin{equation}
G_t(\xi) =
\begin{cases}
0 & \text{if } \alpha>2 \\[6pt]
\displaystyle \int_{\xi}^\infty p_{0,\,[0,2/t]}(s)\,ds
& \text{if } \alpha=2 \\[10pt]
\displaystyle \int_{\xi}^\infty p(s)\,ds
& \text{if } \alpha<2,
\end{cases}
\end{equation}
and the densities $p_{0,\cD}$ and $p$ are as in Theorem~\ref{thm:main0}.
\end{thm}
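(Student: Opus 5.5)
We reduce Theorem~\ref{thm:ABCZ} to Theorem~\ref{thm:main0} with $n=1$, $\alpha=0$ (the rational base point $0=0/1$, so $q=1$), by identifying the angles $\angle P_{t,J}PO$ with points of a multiset of the form $\cF_{Q,\Delta}$ in a small neighbourhood of $0$.

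\textbf{Step 1: reduction to slopes.} For $P=(a,b)\in R_J\cap\Z^2$ with $b\ge 1$, the angle at $P$ between the directions to $P_{t,J}$ and to $O$ is a smooth function of the slopes. Since $|P|\le \sqrt2 J\ll tJ^\alpha$, the ray $PP_{t,J}$ is almost horizontal, and a Taylor expansion gives
\[
\angle P_{t,J}PO=\arctan\frac{a}{b}+\frac{\pi}{2}-\frac{\pi}{2}+\frac{b}{tJ^\alpha}+O\Big(\frac{J^2}{J^{2\alpha}}\Big)
\]
up to the sign conventions (the relevant quantity is the difference between the direction of $P$ from $O$ and from the observer). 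To leading order the angle is therefore a monotone function of $a/b+\text{(perturbation)}$. A cleaner route is to rewrite the ordering in terms of $x=b/a$ or $a/b$. I will choose coordinates in which the angle is $\phi(P)=\theta_1(P)-\theta_2(P)$, with $\theta_1$ the polar angle of $P$ and $\theta_2\approx b/(tJ^\alpha)$. Near the axis $b=0$ the gaps are governed by points $b/a$ with $a\in[J-\Delta,J]$-type windows.

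Concretely, I expect the following structure. The $\alpha_{J,j}$ near $0$ arise from points with small $b/a$. After scaling by $N\sim 2J^2$, the relevant local configuration becomes the multiset $\{p/q\}$ with $q$ ranging over the $x$-coordinate and with an effective window $\Delta\asymp J$, $Q\asymp tJ^{\alpha}$. The identification $\epsilon\Delta\to\sigma$ then gives $\sigma\asymp J^{2-\alpha}\cdot(2/t)$.

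\textbf{Step 2: matching the three regimes.} With $Q\sim tJ^\alpha$ and $\Delta\sim 2J$ we have $\Delta/Q\to0$ because $\alpha>1$, so \eqref{scalim} holds. The target window has $\epsilon\asymp J^{1-\alpha}$, hence $\epsilon\Delta\asymp J^{2-\alpha}$. This tends to $\infty$ if $\alpha<2$, which is Theorem~\ref{thm:main0}(ii) and gives $p$. It is constant if $\alpha=2$, which is case (i) with $\cD$ scaled to $[0,2/t]$ after normalising constants, giving $p_{0,[0,2/t]}$. It tends to $0$ if $\alpha>2$, which is the $\sigma=0$ regime; there \eqref{zerosigma} gives $G_t\equiv0$, and I will supply that argument directly: almost all points lie on a few rigid lines and the gaps collapse.

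\textbf{Step 3: error control.} We must show that the nonlinear corrections in Step~1 (terms of relative size $J/ (tJ^\alpha)$ and $J^{2-2\alpha}$) are $o$ of the mean gap $\Delta_{J,\mathrm{av}}\asymp J^{-2}$ for all but $o(N)$ indices. We must also show that the normalisation $\Delta_{J,\mathrm{av}}$ matches $(Q\Delta)^{-1}$, and that the boundary of $R_J$ and the unreduced-multiplicity conventions contribute negligibly. Since $G_{t,J}$ is a counting function of gaps, a perturbation of all points by $o(\text{mean gap})$ changes it by $o(1)$ at continuity points. Continuity of $p$, and piecewise continuity of $p_{0,\cD}$, then finish the argument.

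\textbf{Main obstacle.} The main obstacle is Step~1: finding an exact change of variables under which the angle ordering coincides with the ordering of fractions in $\cF_{Q,\Delta}\cap\cD_{\epsilon,0}$, with the correct identification of $\cD$. The density of denominators varies across the window, so one likely needs a version of Theorem~\ref{thm:main0} with a smooth weight over $q$, or a decomposition into subwindows. I would first try the projective map $(a,b)\mapsto (a+tJ^\alpha)/b$ and check whether it linearises the angle ordering exactly.
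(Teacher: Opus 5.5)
There is a genuine gap, and it sits in the step you defer as the ``main obstacle''. First, a consistency problem. The angles whose gaps are measured must be the angles at the observer, $\arctan\bigl(b/(a+tJ^\alpha)\bigr)$ for $(a,b)\in R_J\cap\Z^2$; this is the reading used in the paper's proof of Theorem~\ref{thm:ABCZ22}. It is what gives $\Delta_{J,\mathrm{av}}\sim(2tJ^{1+\alpha})^{-1}$ and the limit $p_{0,[0,2/t]}$.

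Your Step~1 instead uses the interior angle $\theta_1-\theta_2$ at $P$. Its leading term is the direction of $P$ seen from $O$, and its mean gap is $\asymp J^{-2}$, the scale you use in Step~3. At that scale, $\theta_2\approx b/(tJ^\alpha)$ can be as large as $J^{1-\alpha}/t$, so it is not a negligible correction when $\alpha<3$. Steps~1 and~3 are therefore inconsistent with Step~2, which tacitly switches to the observer's angle. They must be redone at scale $(J\vartheta)^{-1}$ with $\vartheta=tJ^\alpha$. That part is routine, and your $\sigma$ bookkeeping in Step~2 is correct. Your worry about a varying density of denominators is also harmless: replacing the cut $0\le b\le J$ by $0\le b/(a+\vartheta)\le J/\vartheta$ changes only $O(J^3/\vartheta)=o(N)$ points, so no weighted version of Theorem~\ref{thm:main0} is needed.

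The real problem is that the slopes $b/(a+\vartheta)$ have denominators in $\Z+\vartheta$, which are not integers unless $tJ^\alpha\in\N$. The configuration is the affine lattice $\Z^2+(0,\vartheta)^{\top}$, not $\Z^2$. So the exact change of variables to $\cF_{Q,\Delta}$ that you are seeking does not exist, and Theorem~\ref{thm:main0} does not apply.

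A perturbation argument only partly helps. Write $\vartheta=m+\theta$ with $m\in\Z$, $\theta\in[0,1)$, and set $\beta=b/(a+\vartheta)$. Then $b/(a+m)=\beta(1+\theta/(a+m))$. After the global rescaling $\beta\mapsto\beta(1+\theta/m)$, the remaining displacement is $O(J^2/\vartheta^3)$. This is $o\big((J\vartheta)^{-1}\big)$ if and only if $J^3/\vartheta^2\to0$, i.e.\ $\alpha>3/2$. That rescues your route for $\alpha>3/2$, including the case $\alpha=2$ of Anderson et al. For $1<\alpha\le 3/2$, however, points with different denominators near the same slope are displaced relative to one another by amounts comparable to or larger than the mean gap. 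Theorem~\ref{thm:main0}, which only controls $\Z^2$, says nothing about such configurations.

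The missing idea is the paper's Step~3:
\begin{enumerate}
\item Express the void probabilities through measures $\mu_J$ on $\widetilde{\X}_2$ with the $J$-dependent base point $z_J=[I_2,(0,-\vartheta)^{\top}]\widetilde{\Gamma}_2$.
\item The projection of $\mu_J$ to $\X_2$ forgets the translation. It converges by Theorem~\ref{thm: main dynamical theorem} with $(Q,\Delta,\epsilon)=(\vartheta,J,J/\vartheta)$.
\item The argument of Lemma~\ref{lem:reduction 1 1} does not depend on the base point, so every limit is $\widetilde{U}_2$-invariant.
\item Unique ergodicity on the torus fibres, as in the proof of Proposition~\ref{lem: reduction to X}, then identifies the limit.
\item Lemma~\ref{lem: main appendix} and the void-to-gap relation conclude.
\end{enumerate}

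Your plan for $\alpha>2$ is fine in spirit and needs no integrality. For almost every point, the point with the same numerator and denominator one smaller lies within $O(J/\vartheta^2)=o\big((J\vartheta)^{-1}\big)$.
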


This answers a question of Anderson et al.~\cite{ABCZ}, who had asked for a proof of the existence and shape of the limiting gap distribution for $\alpha\neq 2$. The proof of Theorem \ref{thm:ABCZ} is essentially the same as Theorem~\ref{thm:main0}; see Section~\ref{sec:ABCZ}. The tools developed in the present work allow generalisations of Theorem~\ref{thm:ABCZ} to more general lattices, domains, and higher dimensions, which we will discuss in forthcoming work.

\subsection*{Plan of the paper}

Section~\ref{sec: Gen Dan Mar} establishes a generalised version of the Dani–Margulis theorem~\cite[Thm.~2]{DM93} on the limiting distributions of multi-parameter unipotent flows when both the unipotent subgroups and the initial measures vary simultaneously. The main result of this section, Theorem~\ref{thm: gen Dani Margulis}, should be of independent interest. It is used in Sections~\ref{sec:Finite case} and~\ref{sec: sigma finite is enough}.
The core of the paper is devoted to proving the key technical tool: a classification of limit measures of slowly expanding horospheres in the space of affine lattices. Section~\ref{sec:Main} introduces the geometric setting, states the main ergodic-theoretic result of this work (Theorem~\ref{thm: main dynamical theorem}), and provides an outline of its proof. The full argument is then carried out in Sections~\ref{sec: Reduction to X}--\ref{sec: sigma finite is enough}. Section~\ref{AppB} collects technical lemmas on convergence of measures that are used in the proofs of Theorems~\ref{thm:main0},~\ref{thm:main} and~\ref{thm:main2} in Section~\ref{sec:introproofs}. Section \ref{sec:ABCZ} provides the proof of Theorem~\ref{thm:ABCZ}.

\subsection*{Acknowledgements}

This project was supported by the Royal Society Yusuf Hamied International Exchange Award IES\textbackslash R1\textbackslash 241328, which is gratefully acknowledged.  G.A. and A.G. gratefully acknowledge support from the Department of Atomic Energy, Government of India (project12-R\&D-TFR-5.01-0500).  G.A. gratefully acknowledges support from the Swiss National Science Foundation (grant 200020-212617). A.G. gratefully acknowledges support from a J.C. Bose grant and a grant from the Infosys Foundation.  J.M. gratefully acknowledges support from the Engineering and Physical Sciences Research Council (grant EP/W007010/1). Data supporting this study are included within the article.

\section{A generalised Dani-Margulis theorem}
\label{sec: Gen Dan Mar}

This section proves a generalisation of the classical equidistribution theorem of Dani and Margulis~\cite{DM93} for expanding translates of unipotent flows. Their result establishes equidistribution for one-parameter unipotent flows acting on homogeneous spaces, starting from a sequence of initial points $x$. We extend this in two directions: (i) from one-parameter to multi-parameter unipotent actions, and (ii) from sequences of initial points to sequences of probability measures describing the initial conditions (the original setting of points will then be a special case by taking as the measure a Dirac point mass).

Let $G$ be a connected Lie group and $\Gamma$ a discrete subgroup of $G$. For any closed subgroup $U \subset G$, we define the \emph{singular set}
\begin{multline}
\cS(U) := \{ x \in G/\Gamma : \exists\, H < G \text{ proper, } U \subset H, \\ \text{ such that } H \cdot x \text{ admits a finite } H\text{-invariant measure} \}.
\end{multline}
We call its complement the \emph{generic set}
\begin{equation}
\label{eq:def g U}
\cG(U) := (G/\Gamma)\smallsetminus \cS(U).
\end{equation}

In this section we prove the following theorem.

\begin{thm}
    \label{thm: gen Dani Margulis}
    Let $G$ be a connected Lie group and $\Gamma$ be a lattice in $G$. Let $\nu$ denote the $G$-invariant probability measure on $G/\Gamma$. Suppose $\{u^{(i)}_t: t \in \R^k\}$ is a sequence of unipotent subgroups of $G$ converging to a unipotent subgroup $U=\{u_t: t \in \R^k\}$; that is 
    \begin{equation}
    u^{(i)}_t \rightarrow u_t   
    \end{equation}
    for all $t \in \R^k$, as $i \rightarrow \infty$. Suppose $\mu_i$ is a sequence of Borel probability measures on $G/\Gamma$ converging weakly  to a Borel probability measure $\mu$ such that $\mu(\cS(U))=0$. Suppose  
    \begin{equation}
    I_i= [\alpha_1^{(i)}, \beta_1^{(i)}] \times \cdots \times [\alpha_k^{(i)}, \beta_k^{(i)}]
    \end{equation}
    is sequence of subsets of $\R^k$ such that for all $j$, we have $\alpha_j^{(i)} \leq 0 \leq \beta_j^{(i)}$ and 
    \begin{equation}
        |\beta_j^{(i)} - \alpha_j^{(i)}| \rightarrow \infty,
    \end{equation}
   as $i \rightarrow \infty$. Then for any continuous bounded function $f$ on $G/\Gamma$, we have
    \begin{equation}\label{lambdaiF}
\lambda_i(f):=   \frac{1}{m_{\R^k}(I_i)} \int_{I_i} \int_{G/\Gamma} f(u^{(i)}_t x) \, d\mu_i(x) \, dt \rightarrow \int_{G/\Gamma} f\, d\nu. 
    \end{equation}
\end{thm}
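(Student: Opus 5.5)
The proof follows the standard Dani–Margulis / Mozes–Shah scheme. We show that every weak-$*$ limit of the measures $\lambda_i$ is a probability measure invariant under $U$. We then use Ratner's measure classification together with the linearisation technique to show that it gives zero mass to the tube sets where the singular set lives. By the assumption $\mu(\cS(U))=0$, this forces the limit to be $\nu$.

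\textbf{Step 1: non-escape of mass.} We first show that the family $\{\lambda_i\}$ is tight. Since $\mu_i\to\mu$ weakly, for every $\epsilon>0$ there is a compact $K\subset G/\Gamma$ with $\mu_i(K)>1-\epsilon$ for all $i$. Apply the quantitative non-divergence of polynomial trajectories (Dani, Kleinbock–Margulis, in the multi-parameter form) uniformly for $x\in K$. The maps $t\mapsto u^{(i)}_t$ are polynomial of bounded degree, and their coefficients converge. This gives a compact $K'$ with
\begin{equation*}
m_{\R^k}\bigl(\{t\in I_i: u^{(i)}_t x\notin K'\}\bigr)\le\epsilon\, m_{\R^k}(I_i)
\end{equation*}
for all $x\in K$ and all large $i$. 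Points whose orbit is trapped near a finite-volume subset are handled inside the linearisation step below, since they lie in $\cS(U)$.

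\textbf{Step 2: invariance.} Let $\lambda$ be a weak limit of $\lambda_i$. Fix $s\in\R^k$. Because each side length of $I_i$ tends to infinity, the Følner property gives $m(I_i\triangle (I_i+s))/m(I_i)\to0$. Since $u^{(i)}_s\to u_s$ and $f$ is uniformly continuous on compacts, combining this with tightness shows that $\lambda$ is $U$-invariant. Here we use $u^{(i)}_{t+s}=u^{(i)}_s u^{(i)}_t$. We may reduce to $f$ compactly supported.

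\textbf{Step 3: linearisation.} By Ratner's theorem, the ergodic components of $\lambda$ are homogeneous measures. If $\lambda\neq\nu$, then some proper $H\in\mathcal H$ (a countable family, up to $\Gamma$-conjugacy, of subgroups with closed finite-volume orbits) satisfies $\lambda(\pi(N(H,U)))>0$. Following Dani–Margulis and Mozes–Shah, we use the representation $V=\bigwedge^{\dim H}\mathfrak g$ and the vector $p_H$. Take a compact subset $C$ of the singular tube $N(H,U)$ that avoids the self-intersections. Given $\epsilon>0$, choose a neighbourhood $\Phi$ of $C$ in $V$ as in Dani–Margulis Prop.~4.2; here multi-parameter polynomial maps of bounded degree are used. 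This gives: for every compact $F\subset G/\Gamma\smallsetminus\pi(\overline{N(H,U)}\text{-singular})$ there is a neighbourhood $\Psi$ of $\pi(C)$ such that for $x=g\Gamma\in F$ and all large $i$,
\begin{equation*}
m\bigl(\{t\in I_i: u^{(i)}_t x\in\Psi\}\bigr)\le\epsilon\, m(I_i).
\end{equation*}
The point is that for $x$ generic, the polynomial maps $t\mapsto u^{(i)}_t g\gamma p_H$ must eventually leave $\Phi$ on most of $I_i$, because each of their sides grows. The "$(C,\alpha)$-good" property of polynomials of bounded degree, uniform in $i$, is what converts this into the density estimate above. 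Apply this with $F$ a compact set of $\mu$-measure $>1-\epsilon$ disjoint from $\cS(U)$; such $F$ exists by $\mu(\cS(U))=0$. Choose an open neighbourhood of $F$ so that $\mu_i$ still has mass $\geq 1-2\epsilon$ on it. The estimate must hold uniformly on this neighbourhood, which is guaranteed by compactness and the continuity of the construction in $x$. Then $\lambda(\Psi^\circ)\le 3\epsilon$, which contradicts $\lambda(\pi(C))>0$.

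The main obstacle is Step 3: making the Dani–Margulis linearisation estimate uniform simultaneously in the moving groups $u^{(i)}$, in the starting points ranging over a neighbourhood of a compact generic set, and over multi-dimensional boxes $I_i$ containing $0$ with all sides $\to\infty$. I would first establish a lemma that a polynomial map $\R^k\to V$ of degree $\le d$ satisfies the $(C,\alpha)$-good inequality on boxes, with constants depending only on $(k,d)$. The condition $\alpha_j\le0\le\beta_j$ guarantees that the starting value $g\gamma p_H$ is attained in the box. The argument then runs as follows. If for some $\gamma$ the whole polynomial stays in a slightly larger neighbourhood on the entire box $I_i$, then, since each side grows, a limiting argument forces $u_t g\gamma p_H$ to be bounded, hence constant in $t$. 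This places $x$ in $\cS(U)$, a contradiction.
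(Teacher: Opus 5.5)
Your argument is sound in outline, but it takes a different route from the paper.

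\textbf{How the paper argues.} The paper never runs a $k$-parameter linearisation. It splits $u^{(i)}_t$ into $k$ commuting one-parameter pieces and applies, coordinate by coordinate with Fubini, two one-parameter results of Dani--Margulis as black boxes:
\begin{enumerate}
\item non-divergence (Theorem~\ref{thm: DM93 Thm 6.1}), which gives tightness;
\item avoidance of a neighbourhood of the \emph{whole} singular set $\cS(U)$ (Theorem~\ref{thm: DM93 Thm 1}).
\end{enumerate}
Both are uniform over \emph{all} one-parameter unipotent subgroups and all $T\ge 0$, so the moving groups $u^{(i)}$ cost nothing. The price is that Theorem~\ref{thm: DM93 Thm 1} only applies to starting points in a compact subset of $\cG(U)$. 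Since $\cS(U)$ is typically dense, no open neighbourhood of such a set avoids it. The paper therefore first smooths $\mu_i$ by convolution over shrinking balls, so that $\widetilde\mu_i\ll\nu$ and $\widetilde\mu_i(\cS(U))=0$ (Lemma~\ref{lem: zero measure of sub hom spaces}). It then uses $\sigma$-compactness (Lemma~\ref{lem: sequence zero measure sigma compact}) to obtain a single compact $F_0\subset\cG(U)$ carrying mass at least $1-\delta$ for every $i$.

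\textbf{How your route differs, and what each buys.} You argue by contradiction with one $H$ and one compact $C\subset N(H,U)$ satisfying $\lambda(\pi(C))>0$. Only the single closed set $\pi(\eta_H^{-1}(D))\subset\pi(N(H,U))$ must then be avoided. So an open neighbourhood $O$ of a compact generic $F$ suffices, and you need neither smoothing nor the $\sigma$-compactness lemma. What you pay is heavier machinery:
\begin{enumerate}
\item a $k$-parameter version of the Dani--Margulis dichotomy on boxes with independent side lengths, which needs multivariable $(C,\alpha)$-good estimates and a Besicovitch-type covering in place of the decomposition into maximal intervals;
\item a multi-parameter non-divergence theorem valid for arbitrary $G/\Gamma$, which you could instead obtain by the paper's iteration.
\end{enumerate}

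\textbf{Three fixes to your sketch.}
\begin{enumerate}
\item The uniformity on $O$ is cleanest via $0\in I_i$, not via ``continuity''. Take $\overline{O}$ compact and disjoint from the set $\pi(\eta_H^{-1}(D))$, which is closed because $\Gamma p_H$ is discrete. Then take $\Phi$ so small that $\overline{\Phi}\cap\eta_H(\pi^{-1}(\overline{O}))=\emptyset$. Alternative (a) of the dichotomy then already fails at $t=0$, for every $x\in O$ and every $i$. The growth of $I_i$ is needed only for $U$-invariance.
\item Your limiting argument, as written, only shows that $g\gamma p_H$ is $U$-fixed, i.e.\ $(g\gamma)^{-1}Ug\gamma\subset N^1_G(H)$. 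This is strictly weaker than $g\gamma\in N(H,U)$, and an extra step is needed to conclude that $x\in\cS(U)$.
\item Non-divergence has no exceptional set here, so the closing remark of your Step~1 is unnecessary.
\end{enumerate}
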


\begin{rem}
Theorem~\ref{thm: gen Dani Margulis} extends \cite[Thm.~2]{DM93}, which treats the case of a single-parameter unipotent flow and for $\mu_i$ and $\mu$ equal to Dirac measures.
\end{rem}

The proof of the theorem requires several steps. First of all, we need to show that any subsequential limit of the measures $\lambda_i$ defined in \eqref{lambdaiF}
is a probability measure. To prove this, we will need the following result from \cite{DM93}.

\begin{thm}[{\cite[Thm.~6.1]{DM93}}]
\label{thm: DM93 Thm 6.1}
   Let $G$ be a connected Lie group and $\Gamma$ be a lattice in $G$. Let $F$ be a compact subset of $G/\Gamma$ and let $\e>0$ be given. Then there exists a compact subset $K$ of $G/\Gamma$ such that for any unipotent one-parameter subgroup $\{v_t\}$ of $G$, any $x \in F$, and any $T \geq 0$,
    \begin{equation}
    m_{\R} \left\{ t \in [0,T]: v_tx \in K\right\}  \geq (1-\e) T.
    \end{equation}
\end{thm}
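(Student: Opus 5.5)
The plan is to prove this as a quantitative non-divergence statement. The approach follows Dani's and Margulis' polynomial-growth method, in the form later systematised by Kleinbock--Margulis through $(C,\alpha)$-good functions. I would first do the model case $G=\SL(m,\R)$, $\Gamma=\SL(m,\Z)$, and then reduce the general case to it and to a rank-one analogue.

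\textbf{Model case.} For $g\Gamma\in G/\Gamma$ and a primitive subgroup $\Delta\subset\Z^m$ of rank $r$, set $\psi_{\Delta}(g)=\|g\,w_\Delta\|$, where $w_\Delta\in\bigwedge^r\Z^m$ is the wedge of a basis of $\Delta$. By Mahler's criterion, the sets
\[
K_\delta=\{g\Gamma : \psi_\Delta(g)\ge \delta \text{ for all primitive } \Delta\}
\]
are compact and exhaust $G/\Gamma$.

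For a unipotent one-parameter group $v_t=\exp(tN)$ with $N$ nilpotent, every function $t\mapsto \psi_\Delta(v_t g)^2$ is a polynomial of degree at most $2r(m-1)$. This bound is independent of $N$, and that independence is the source of uniformity over all $\{v_t\}$. Polynomials of bounded degree $d$ are $(C,\alpha)$-good on $\R$ with constants depending only on $d$: for any interval $I$,
\[
m_\R\{t\in I:|f(t)|<\e\}\le C\,(\e/\sup_I|f|)^\alpha\, m_\R(I).
\]

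Since $F$ is compact, there is $\rho>0$ with $\psi_\Delta(g)\ge\rho$ for all $g\Gamma\in F$ and all $\Delta$. So on $I=[0,T]$ each function satisfies $\sup_{[0,T]}\psi_\Delta(v_t g)\ge\psi_\Delta(g)\ge\rho$. This is where both hypotheses enter: that $x$ lies in a fixed compact set, and that the interval starts at $0$.

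\textbf{The main obstacle: the flag induction.} One cannot simply sum over the infinitely many $\Delta$. Instead I would run the Kleinbock--Margulis induction over flags of primitive subgroups. One covers $[0,T]$ by maximal subintervals on which some $\Delta$ is ``marked'' (small but not too small), using the submodularity inequality $\psi_{\Delta\cap\Delta'}\psi_{\Delta+\Delta'}\le\psi_\Delta\psi_{\Delta'}$ to ensure the marked subgroups form a flag. A Besicovitch-type covering then gives
\[
m_\R\{t\in[0,T]: v_t x\notin K_\delta\}\le C'(\delta/\rho)^{\alpha} T,
\]
with $C'$ depending only on $m$. Choosing $\delta$ small makes this at most $\e T$.

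\textbf{Reduction for general $G$.} First I would pass to the case where $G$ is semisimple with trivial centre and no compact factors. This step quotients by the solvable radical and the maximal compact normal subgroup, and uses that the image of $\Gamma$ is a lattice (Auslander, or Raghunathan's results for this setting). The fibres of the quotient map are compact or carry a compact homogeneous structure, so compactness transfers back. Next, decompose into irreducible factors, which is harmless after passing to finite index.

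For irreducible factors of real rank $\ge2$, Margulis arithmeticity realises $\Gamma$ commensurably inside $\mathbf G(\Z)\subset\SL(m,\Z)$. Then $G/\Gamma$ embeds properly in $\SL(m,\R)/\SL(m,\Z)$ (closed orbit), and the model case applies; unipotent elements stay unipotent under this embedding.

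For rank one I would use the Garland--Raghunathan description of the cusps: finitely many $\Gamma$-conjugacy classes of parabolics, with height functions $\|g w\|$ for $w$ in a discrete orbit $\Gamma w_P$ in a representation. These functions are again bounded-degree polynomials along unipotent orbits, and distinct cusps are ``disjoint'' (one cannot be deep in two simultaneously). The single-step version of the covering argument then suffices.

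I expect the flag induction and the uniformity in the reduction steps (commensurability, finite index, compact fibres) to be the delicate parts; the polynomial estimates themselves are routine.
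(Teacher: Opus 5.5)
The paper does not prove this statement. It is quoted verbatim from \cite[Thm.~6.1]{DM93} and used as a black box, in the proof of Lemma~\ref{lem: zero measure of sub hom spaces} and in Steps~1 and~4 of the proof of Theorem~\ref{thm: gen Dani Margulis}. There, the uniformity over all unipotent one-parameter subgroups and all $T\ge 0$ is what makes the iterated Fubini argument work, so there is no internal proof to compare with.

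Measured against the literature, your outline is the standard argument and I see no genuine gap:
\begin{itemize}
\item reduce to semisimple groups without compact factors;
\item treat arithmetic lattices inside $\SL(m,\R)/\SL(m,\Z)$, using polynomial behaviour along unipotent orbits and a flag induction over primitive subgroups;
\item treat rank-one lattices via the Garland--Raghunathan cusp structure.
\end{itemize}
You locate the source of uniformity correctly: the degree bound does not depend on the nilpotent generator, and $\sup_{[0,T]}\psi_\Delta(v_tg)\ge\psi_\Delta(g)\ge\rho$ because $0\in[0,T]$ and $x\in F$. The Kleinbock--Margulis covering even gives a rate $C'(\delta/\rho)^\alpha$, which is more than the statement needs.

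Three steps need more care than your sketch gives them.

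\textbf{First, the quotient.} The normal subgroup to divide out is $N=RS_c$: the radical times the compact factors of a Levi subgroup. $R\Gamma$ alone need not be closed when $G/R$ has compact factors. For this $N$, $N\Gamma$ is closed and $\Gamma\cap N$ is cocompact in $N$. That is what makes $G/\Gamma\to G'/\Gamma'$ proper, where $G'=G/N$ and $\Gamma'$ is the image of $\Gamma$.

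\textbf{Second, arithmeticity.} Margulis arithmeticity in general only gives a surjection $\mathbf G(\R)^0\to G$ with compact kernel. So $G/\Gamma$ need not literally embed in $\SL(m,\R)/\SL(m,\Z)$. The cheapest fix is to split by uniformity. If $\Gamma$ is uniform the theorem is trivial: take $K=G/\Gamma$. If $\Gamma$ is non-uniform, the $\Q$-simple group $\mathbf G$ is $\Q$-isotropic, so $\mathbf G(\R)$ has no compact factors and the kernel is finite. Alternatively, lift the unipotent one-parameter subgroup to $\mathbf G(\R)^0$ and project with compact fibres, as in your first reduction.

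\textbf{Third, rank one.} State the two facts you take from Garland--Raghunathan explicitly:
\begin{itemize}
\item $\{g\Gamma:\|g\gamma w_{P_i}\|\ge\delta \text{ for all }\gamma,i\}$ is compact;
\item there is $\delta_0>0$ such that all $v\in\bigcup_i\Gamma w_{P_i}$ with $\|gv\|<\delta_0$ are proportional.
\end{itemize}
Then the argument runs as follows. The components of $\{t\in[0,T]:\|v_tgv\|<\delta_0\}$ belonging to non-proportional $v$ are disjoint. On the closure of each component the supremum is at least $\min(\rho,\delta_0)$, because its left endpoint is either $0$ or a point where the value equals $\delta_0$. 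Summing the $(C,\alpha)$-good estimate over components then gives the bound.
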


Using Theorem~\ref{thm: DM93 Thm 6.1}, we obtain tightness of the sequence of measures $(\lambda_i)_i$. By a standard argument, any subsequential limit is invariant under the action of the unipotent group $U$. Then, by Ratner's measure classification theorem \cites{Ratner91A}, every such limit measure is an integral of homogeneous measures. To conclude that the limit measure equals $\nu$, it therefore suffices to show that it gives zero measure to the set $\cS(U)$. For this, we will use the following result from \cite{DM93}.

\begin{thm}[{\cite[Thm.~1]{DM93}}]
\label{thm: DM93 Thm 1}
    Let $G$ be a connected Lie group and $\Gamma$ be a discrete subgroup of $G$. Suppose $W$ is a closed subgroup of $G$ which is generated by the unipotent elements contained in it. Let $F$ be a compact subset of $\cG(W)$. Then for any $\e>0$, there exists a neighbourhood $\Omega$ of $\cS(W)$ such that for any unipotent one-parameter subgroup $\{v_t\}$ of $G$, any $x \in F$, and any $T \geq 0$,
    \begin{equation}
    m_{\R} \left\{ t \in [0,T]: v_tx \in \Omega\right\} \leq \e T.
    \end{equation}
\end{thm}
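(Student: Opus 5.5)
This is the Dani--Margulis linearization theorem, and I would follow their linearization strategy. First, describe $\cS(W)$ algebraically. Let $\mathcal{H}$ be the countable family of proper closed connected subgroups $H<G$ such that $H\Gamma/\Gamma$ carries a finite $H$-invariant measure and $H\cap\Gamma$ is Zariski dense in $H$ (in the Ratner sense). For $H\in\mathcal H$ put $X(H,W)=\{g\in G: W\subset gHg^{-1}\}$. Using Ratner's theorem, which makes every finite-volume orbit of a subgroup containing $W$ homogeneous of this type, one checks that
\[
\cS(W)=\bigcup_{H\in\mathcal H} X(H,W)\Gamma/\Gamma .
\]
Next, linearize. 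Take $V=\bigwedge^{\dim H}\mathfrak g$ with the adjoint representation, and let $p_H\in V$ be a wedge of a basis of $\mathfrak h$. One checks that the orbit $\Gamma p_H$ is discrete, using that $\Gamma\cap N_G(H)$ is a lattice in a subgroup preserving $p_H$ up to $\pm1$. One also checks that $X(H,W)=\{g: g p_H\in A_H\}$, where $A_H=\{v\in V: v\wedge \operatorname{Ad}(w)\text{-condition}\}$ is the linear subspace of vectors whose associated subalgebra contains $\mathfrak w$.

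The key analytic input is that, for any unipotent one-parameter subgroup $\{v_t\}$ and any $g$, the map $t\mapsto v_t g\gamma p_H$ is a polynomial map of degree at most $d=d(G)$. Hence it is $(C,\alpha)$-good: for every interval $J$ and every $\delta>0$,
\[
m\{t\in J:\|\phi(t)\|<\delta\}\le C\Big(\tfrac{\delta}{\sup_J\|\phi\|}\Big)^{\alpha}|J|.
\]
The same holds for the distance of $\phi(t)$ to the subspace $A_H$, measured in a complementary coordinate. Using this, for compact $D\subset A_H$ one builds neighbourhoods $\Phi$ of $D$ in $V$ with the following property. For $x=g\Gamma\in F$ and every $T$, either the set of $t\in[0,T]$ with $v_tg\gamma p_H\in\Phi$ for some $\gamma$ has measure $\le\e T$, or some $g\gamma p_H$ lies in $A_H$. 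The second alternative means $x\in\cS(W)$, which is excluded by $F\subset\cG(W)$ once we use compactness of $F$.

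Then I would globalize. By Theorem~\ref{thm: DM93 Thm 6.1}, up to an $\e T$ error we may restrict to times with $v_tx$ in a fixed compact $K$. On $K$, only finitely many conjugacy classes in $\mathcal H$ are relevant, by a volume argument. For each such class, $\Omega$ is taken as the union of the images of the tubes $\{g: gp_H\in\Phi\}$. We order these classes by dimension and induct: near the "self-intersection" set, the union over $H'\in\mathcal H$ of dimension smaller than $H$ of the sets $X(H,W)\cap X(H',W)$, the discreteness of $\Gamma p_H$ may fail to separate distinct $\gamma p_H$. We therefore first remove a small neighbourhood of lower-dimensional singular pieces, using the inductive hypothesis. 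Outside it, each point near $X(H,W)\Gamma$ comes from a unique $\gamma p_H$ in a bounded region of $V$, and the polynomial estimate applies to a single polynomial on each subinterval.

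I expect this self-intersection step to be the main obstacle: making the choice of $\Phi$ uniform in $x\in F$ and in the subgroup $\{v_t\}$, and controlling it so that the polynomial estimate still applies on each subinterval. My first attempt would be the Dani--Margulis trick of choosing compact $D\subset A_H$ and shrinking $\Phi$ so that, for $g$ in a fixed compact set, $g\gamma_1p_H,g\gamma_2p_H\in\Phi$ forces $\gamma_1p_H=\pm\gamma_2p_H$ unless $g$ lies in the lower-dimensional singular set.
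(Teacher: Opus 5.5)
The paper gives no proof of this statement; it quotes it from Dani--Margulis \cite{DM93}. Your local machinery follows their linearization route: the sets $X(H,W)$, the subspace $A_H$, $(C,\alpha)$-good estimates for $t\mapsto v_tg\gamma p_H$, and induction on $\dim H$ to control self-intersections.

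\textbf{The gap is in the globalization: the $\Omega$ you build is not a neighbourhood of $\cS(W)$.}

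A tube $\{g\Gamma : g\Gamma p_H\cap\Phi\neq\emptyset\}$ over a compact $D\subset A_H$ only covers the part of $X(H,W)\Gamma/\Gamma$ where the relevant $\|g\gamma p_H\|$ is bounded. That norm is unbounded on $X(H,W)\Gamma/\Gamma\cap K$. Take $G=\SL_2(\R)$, $\Gamma=\SL_2(\Z)$ and $W$ the upper unipotent group. Only one $\Gamma$-class of $\mathcal H$ is relevant, but $\|g\gamma p_H\|$ is proportional to the length of the closed horocycle through $g\Gamma$. Arbitrarily long closed horocycles pass through any compact set with non-empty interior.

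Your claim that ``only finitely many classes are relevant on $K$'' is also false in general. For example, in $\SL_2(\R)^2/\SL_2(\Z)^2$ with $W$ the diagonally embedded unipotent group, the graphs $\{(h,\delta h\delta^{-1})\}$, $\delta\in\mathrm{GL}_2^+(\Q)$, give infinitely many classes whose pieces meet a fixed compact set. Since every point of $\cS(W)$ must lie in $\Omega$, nothing can be discarded.

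The fix is a countable exhaustion.
\begin{itemize}
\item $\mathcal H$ is countable and each $X(H,W)$ is closed, so $\cS(W)=\bigcup_i C_i\Gamma/\Gamma$ with $C_i\subset X(H_i,W)$ compact.
\item For each $i$, run your dimension induction. First cover $C_i\cap X(H',W)$ for the countably many smaller $H'\in\mathcal H$. Then treat the compact remainder, which avoids the lower-dimensional singular set, with a single polynomial per interval.
\item This yields an open $\Psi_i\supset C_i\Gamma/\Gamma$ with $m_{\R}\{t\in[0,T]: v_tx\in\Psi_i\}\le 2^{-i}\e T$ for all $x\in F$, all $\{v_t\}$ and all $T\ge 0$.
\item Set $\Omega=\bigcup_i\Psi_i$.
\end{itemize}
The uniformity in $x$, $\{v_t\}$ and $T$ is exactly what makes this summation legitimate. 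It also removes your appeal to Theorem~\ref{thm: DM93 Thm 6.1}. That result is unavailable here anyway, because the statement only assumes $\Gamma$ discrete, not a lattice. It is not needed, since the per-piece dichotomy holds for every $x\in G/\Gamma$.

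Two smaller corrections.

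First, the polynomial argument does not give the alternative ``some $g\gamma p_H\in A_H$''. It gives ``$v_tg\gamma p_H\in\Phi$ for all $t\in[0,T]$'' for some $\gamma$. You exclude this at $t=0$ by shrinking $\Phi$ so that it misses $\widetilde F\Gamma p_H$, where $\widetilde F\subset G$ is a compact lift of $F$. That set is closed because $\Gamma p_H$ is discrete, and it is disjoint from $A_H$ because $F\cap\cS(W)=\emptyset$. This is where compactness of $F$ enters, and it must be done for each piece $i$ before $\Psi_i$ is chosen.

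Second, your justification of discreteness of $\Gamma p_H$ is wrong. $\Gamma\cap\operatorname{Stab}(\pm p_H)$ need not be a lattice in $\operatorname{Stab}(\pm p_H)$. For $H=\SL_2(\R)$ in the upper-left block of $\SL_3(\R)$ and $\Gamma=\SL_3(\Z)$, the stabiliser is $\cong\mathrm{GL}_2(\R)$ and the intersection is $\cong\mathrm{GL}_2(\Z)$. Such a fact would not imply discreteness anyway. Discreteness of $\Gamma p_H$ is a separate Dani--Margulis theorem, proved using the defining properties of $\mathcal H$: $H\cap\Gamma$ is a lattice in $H$, and the unipotent part of $H$ acts ergodically. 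You should cite it or prove it.
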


In order to apply the above result, we need to show for every $\e>0$, the existence of a compact subset $F_\e$ of $\cG(U)$ such that $\mu_i(F_\e)>1-\e$ for all $i$. However, this existence cannot be guaranteed. For example, one could take $\mu_i= \delta_{x_i}$, where $x_i$ is a sequence of points in $ \cS(U)$ converging to a point in $\cG(U)$. Hence, to apply Theorem~\ref{thm: DM93 Thm 1}, the first step is to avoid such cases, that is, we show that we can replace $(\mu_i)_i$ by a sequence $(\widetilde{\mu}_i)_i$ such that $\widetilde{\mu}_i(\cS(U))=0$, and the modified sequence of measures $\widetilde\lambda_i$ defined by
\begin{align}
    \label{eq: def measures appendix 2}
\widetilde\lambda_i(f):=   \frac{1}{m_{\R^k}(I_i)} \int_{I_i} \int_{G/\Gamma} f(u^{(i)}_t x) \, d\widetilde\mu_i(x)
\end{align}
still converges to the same limit as the $\lambda_i$, along every subsequence. We will need the following lemma.

\begin{lem}\label{lem: zero measure of sub hom spaces}
    Let $G$ be a connected Lie group and $\Gamma$ a lattice in $G$. 
    Let $U$ be a unipotent subgroup of $G$. 
    Suppose there exists $x \in G/\Gamma$ such that 
    \begin{align}
    \label{eq: 1 : lem: zero measure of sub hom spaces}
         \overline{U \cdot x} = G/\Gamma.
    \end{align}
    Then
    \begin{equation}
        \nu(\cS(U)) = 0,
    \end{equation}
    where $\nu$ denotes the $G$-invariant probability measure on $G/\Gamma$.
\end{lem}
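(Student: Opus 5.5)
The plan is to argue by contradiction, using the countable ``tube'' description of the singular set from Dani--Margulis together with the fact that the tubes are analytic subsets of $G$.

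\textbf{Step 1: countable decomposition of $\cS(U)$.} Let $\mathcal{H}$ be the collection of proper closed connected subgroups $H<G$ such that $H\cap\Gamma$ is a lattice in $H$ and the subgroup of $H$ generated by its unipotent one-parameter subgroups acts ergodically on $H/(H\cap\Gamma)$. A standard result (Ratner; Dani--Margulis \cite{DM93}) says that $\mathcal{H}$ is countable. For $H\in\mathcal{H}$ put
\[
X(H,U)=\{g\in G:\ U\subset gHg^{-1}\} = \{g\in G : \operatorname{Ad}(g^{-1})\operatorname{Lie}(U)\subset \operatorname{Lie}(H)\}.
\]
As in \cite{DM93}, every $x\in\cS(U)$ has the form $x=g\Gamma$ with $g\in X(H,U)$ for some $H\in\mathcal{H}$. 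Here one passes to the identity component and, via Ratner's theorem, to the unipotently generated part of a subgroup containing $U$ with a finite-volume orbit. Hence
\[
\cS(U)=\bigcup_{H\in\mathcal{H}}\pi\big(X(H,U)\big),
\]
where $\pi:G\to G/\Gamma$ is the projection. In particular $\cS(U)$ is measurable.

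\textbf{Step 2: positive measure forces a tube to be everything.} Suppose $\nu(\cS(U))>0$. By countability there is an $H\in\mathcal{H}$ with $\nu(\pi(X(H,U)))>0$. Since $\pi$ is countable-to-one and locally measure-preserving, $X(H,U)$ has positive Haar measure in $G$.

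The defining condition is real-analytic in $g$: it says that certain minors of the matrix of $\operatorname{Ad}(g^{-1})$, applied to a basis of $\operatorname{Lie}(U)$ and taken modulo $\operatorname{Lie}(H)$, vanish. So $X(H,U)$ is the common zero set of real-analytic functions on the connected analytic manifold $G$. A real-analytic function vanishing on a set of positive measure vanishes identically. Therefore $X(H,U)=G$.

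\textbf{Step 3: contradiction with the dense orbit.} Take $x=g\Gamma$ with $\overline{Ux}=G/\Gamma$. Since $g\in X(H,U)$, we have $U\subset gHg^{-1}$. Hence
\[
Ux\subset gHg^{-1}\,g\Gamma = gH\Gamma/\Gamma .
\]
The set $gH\Gamma/\Gamma$ is closed, because $H\Gamma/\Gamma$ is a closed finite-volume orbit. So the density of $Ux$ gives $gH\Gamma=G$, that is, $H\Gamma=G$.

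Then $G/\Gamma=H\Gamma/\Gamma\cong H/(H\cap\Gamma)$ is a countable union of translates of $H$-orbits. By the Baire category theorem $H$ has nonempty interior, so $H$ is open in $G$. Since $G$ is connected, $H=G$, contradicting the properness of $H$. Hence $\nu(\cS(U))=0$.

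\textbf{Main obstacle.} The delicate point is Step 1: the precise reduction from ``some proper $H\supset U$ has a finite invariant measure on $Hx$'' to membership in a tube $X(H,U)$ with $H$ drawn from a fixed countable family. This relies on Ratner's theorem and the countability result of Dani--Margulis. I would quote it from \cite{DM93} rather than reprove it. The analyticity argument in Step 2 and the Baire argument in Step 3 are routine.
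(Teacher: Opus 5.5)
Your proof is correct, but it takes a different route from the paper's.

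\textbf{The paper's route.} The paper argues dynamically. It uses the dense orbit and Ratner's uniform distribution theorem to get $T^{-k}\int_{[0,T]^k}\delta_{u_tx}\,dt\to\nu$. Starting from $F_0=\{x\}\subset\cG(U)$, it iterates the Dani--Margulis non-divergence and avoidance estimates (Theorems~6.1 and~1 of \cite{DM93}) to build compact sets $F_1,\dots,F_k\subset\cG(U)$. A Fubini decomposition of $[0,T]^k$ along the coordinate one-parameter subgroups then shows the trajectory spends at most a $k\delta$ fraction of time outside $F_k$. Since $G/\Gamma\smallsetminus F_k$ is open, this gives $\nu(\cS(U))\le k\delta$.

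\textbf{Your route.} You need none of this quantitative machinery. Your only nontrivial input is the description of $\cS(U)$ as a countable union of tubes $\pi(X(H,U))$. Analyticity then gives a zero--one law: either $\nu(\cS(U))=0$, or some tube is all of $G$ and hence $\cS(U)=G/\Gamma$. The dense orbit, through the closedness of $gH\Gamma/\Gamma$ and Baire, only excludes the second alternative.

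\textbf{What each approach buys.} Your argument is shorter and avoids the equidistribution of $k$-parameter averages. It also shows that the hypothesis enters only through $\cG(U)\neq\emptyset$. Its cost is the reduction in Step~1: countability of $\mathcal H$, and passing from an arbitrary proper $H\supset U$ with a finite-volume orbit to a member of $\mathcal H$. You rightly quote this from \cite{DM93}, and the paper relies on the same description implicitly when it cites $\sigma$-compactness of $\cS(U)$. The paper's route has the advantage of reusing exactly the estimates and the coordinate-wise Fubini decomposition it needs anyway for Theorem~\ref{thm: gen Dani Margulis}.

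\textbf{Two small points.} The Lie-algebra description of $X(H,U)$ uses that $U$ is connected, which holds for $U=\{u_t:t\in\R^k\}$. The Baire step is cleanest in $G$ itself: write $G=H\Gamma=\bigcup_{\gamma\in\Gamma}H\gamma$ as a countable union of closed sets.
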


\begin{proof}
Let $\delta>0$ be given. Fix $x \in G/\Gamma$ satisfying \eqref{eq: 1 : lem: zero measure of sub hom spaces}.  Suppose $U=\{u_t: t \in \R^k\}$. Then, using Ratner's theorem \cites{Ratner91A}, we have
 \begin{align}
 \label{eq: 2: lem: zero measure of sub hom spaces}
     \lim_{T \rightarrow \infty} \frac{1}{T^k}\int_{[0,T]^k}  \delta_{u_t x} \, dt = \nu.
 \end{align}
 
 Note that using Theorems~\ref{thm: DM93 Thm 6.1} and~\ref{thm: DM93 Thm 1}, we see that for any compact subset $F$ of $\cG(U)$, there exists a compact subset $F'$ contained in $\cG(U)$ such that for any unipotent subgroup $\{v_t\}$ of $G$, any $y \in F$, and any $T \geq 0$,
 \begin{equation}
 m_{\R}\{t \in [0,T]: v_t y \in F'  \} \geq (1-\delta)T.
 \end{equation}

We apply the above fact recursively, to produce a sequence of compact sets $(F_j)_{j \in \Z_{\geq 0}}$ contained in $\cG(U)$, starting from $F_0=\{x\}$, so that for any unipotent subgroup $\{v_t\}$ of $G$, any $j \geq 0$, any $y \in F_j$, and any $T \geq 0$,
 \begin{equation}
 m_{\R}\{t \in [0,T]: v_t y \in F_{j+1}  \} \geq (1-\delta)T.
 \end{equation}

Then, using \eqref{eq: 2: lem: zero measure of sub hom spaces}, we have
\begin{align}
\label{eq: 3: lem: zero measure of sub hom spaces}
    \nu(\cS(U)) \leq \nu(G/\Gamma \setminus F_{k}) \leq \lim_{T \rightarrow \infty} \frac{1}{T^k} \; m_{\R^k} \{t \in [0,T]^k: u_t x \notin F_k\}.
\end{align} 
Let $u^{(i)}_{t_i}$ denote the element $u_{t}$ where $t \in \R^k$ has all zero entries except the $i$-th entry which equals $t_i$. Then one checks that the set
\begin{equation}
\{t \in [0,T]^k: u_t x \notin F_k\}
\end{equation}
is contained in the sets $\Omega_1 \cup \ldots \cup \Omega_k$, where $\Omega_j$ equals the set of all $t=(t_1, \ldots, t_k) \in [0,T]^k$ such that 
\begin{align}
    u_{t_{j-1}}^{(j-1)} \cdots u_{t_1}^{(1)} x \in F_{j_1}, \qquad
  u_{t_{j}}^{(j)} \cdots u_{t_1}^{(1)} x \notin F_{j}.
\end{align}
By Fubini's theorem and the definition of $F_j$, one notes that
\begin{align}
\label{eq: 4: lem: zero measure of sub hom spaces}
     m_{\R^k}(\Omega_j) \leq T^{k-1} \sup_{x \in F_{j-1}} \left( m_{\R}\{t \in [0,T] : u_t^{(j)}x \notin F_j \}\right) \leq \delta T^k.
\end{align}
Combining \eqref{eq: 3: lem: zero measure of sub hom spaces} and \eqref{eq: 4: lem: zero measure of sub hom spaces} implies that 
\begin{align}
    \nu(\cS(U)) \leq k \delta.
\end{align}
Since $k$ is fixed and $\delta>0$ is arbitrary, the lemma follows.
\end{proof}

Using Lemma~\ref{lem: zero measure of sub hom spaces}, we see that $\widetilde{\mu}_i$ can be taken to be the average of the pushforwards $g_*\mu_i$ over the small ball $B_{\varepsilon_i}(e)$ in $G$, normalized by the Haar measure of this ball, for sufficiently small $\varepsilon_i$. This is because the convolved measure is absolutely continuous with respect to $\nu$, and hence gives zero measure to the set $\cS(U)$ by Lemma~\ref{lem: zero measure of sub hom spaces}. Also, if $\e_i$ converges to zero rapidly, the measures $(\lambda_i)_i$ and~\eqref{eq: def measures appendix 2} have the same limit. Now to apply Theorem~\ref{thm: DM93 Thm 1}, we will use the following lemma and the fact that $\cS(U)$ is $\sigma$-compact.

\begin{lem}
\label{lem: sequence zero measure sigma compact}
    Suppose $\mu_i$ is a sequence of Borel probability measures on a locally compact second countable metric space $X$ converging weakly to a Borel probability measure $\mu$. Suppose $K$ is a $\sigma$-compact subset of $X$ such that 
    \begin{align}
        \label{eq: lem: sequence zero measure sigma compact 1}
         \mu(K)= \mu_i(K) =0,
    \end{align}
     for all $i \in \N$. Then for every $\e>0$, there exists an open subset $O$ of $X$ containing $K$ such that
    \begin{equation}
    \mu(O) \leq \e, \quad  \mu_i(O) \leq \e,
    \end{equation}
    for all $ i \in \N$.
\end{lem}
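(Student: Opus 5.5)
Write $K=\bigcup_{m\ge1}K_m$ with $K_1\subset K_2\subset\cdots$ compact, and use the metric $d$ on $X$. For $\delta>0$ let $K^\delta$ be the open $\delta$-neighbourhood of $K$. The natural first try would be $O=K^\delta$ for small $\delta$, but this can fail: $K$ is not closed, so $\mu(\overline{K})$ may be positive and weak convergence gives no uniform control. The plan is therefore to shrink the neighbourhood as we go out along the exhaustion. We take
\begin{equation*}
O=\bigcup_{m\ge1} K_m^{\delta_m}, \qquad K_m^{\delta_m}=\{x: d(x,K_m)<\delta_m\},
\end{equation*}
with radii $\delta_m>0$ chosen so that $\mu(K_m^{\delta_m})\le \e 2^{-m}$ and $\mu_i(K_m^{\delta_m})\le \e 2^{-m}$ for all $i$. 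Then $O$ is open, contains $K$, and has measure at most $\e$ under $\mu$ and every $\mu_i$.

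The main step is producing $\delta_m$ for a single compact set $C=K_m$ with $\mu(C)=\mu_i(C)=0$. We need $\delta>0$ with $\sup_i \mu_i(C^\delta)\le\eta$ and $\mu(C^\delta)\le\eta$.

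1. Since $\bigcap_\delta \overline{C^\delta}=C$ (as $C$ is closed) and $\mu(C)=0$, continuity from above gives $\delta_0$ with $\mu(\overline{C^{\delta_0}})<\eta$.
2. Since $\overline{C^{\delta_0}}$ is closed, the portmanteau theorem gives $\limsup_i \mu_i(\overline{C^{\delta_0}})\le \mu(\overline{C^{\delta_0}})<\eta$. Hence there is $i_0$ with $\mu_i(C^{\delta_0})<\eta$ for all $i>i_0$.
3. For each of the finitely many $i\le i_0$, continuity from above and $\mu_i(C)=0$ give $\delta^{(i)}$ with $\mu_i(C^{\delta^{(i)}})<\eta$.
4. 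Set $\delta=\min(\delta_0,\delta^{(1)},\dots,\delta^{(i_0)})>0$. Monotonicity in $\delta$ then gives the required bounds for every $i$ and for $\mu$.

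Applying this with $C=K_m$ and $\eta=\e 2^{-m}$ yields $\delta_m$, and countable subadditivity finishes the proof.

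The only delicate point is the uniformity over $i$. Closedness of $C$ (hence compactness of each $K_m$) is what makes portmanteau applicable and deals with the tail $i>i_0$, while finiteness handles the initial segment $i\le i_0$. Local compactness and second countability are not really needed beyond $X$ being metric.
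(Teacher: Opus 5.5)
Your proof is correct and takes essentially the same approach as the paper. For each compact piece you choose a neighbourhood whose closure is $\mu$-small, use the closed-set half of the portmanteau theorem for all $i$ beyond some $i_0$, and treat the finitely many remaining $\mu_i$ individually; you then take the union over the pieces with weights $\varepsilon 2^{-m}$. The only cosmetic difference is that you use explicit metric $\delta$-neighbourhoods, continuity from above and a minimum radius, where the paper uses outer regularity of the measures, normality of $X$ (an open $O_2$ with $\overline{O_2}\subset O_1$) and an intersection of open sets.
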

\begin{proof}
   {\bf Case 1:} Assume that $K$ is compact. Note that since $X$ is a locally compact second countable metric space, we see that $\mu$ and each $\mu_i$ is regular. Therefore using \eqref{eq: lem: sequence zero measure sigma compact 1}, there exists an open set $O_1$ containing $K$ such that 
   \begin{equation}
   \mu(O_1)< \frac{\e}{2}.
   \end{equation}
   Now, using the fact that the space $X$ is regular, we fix an open set $O_2$ such that
   \begin{equation}
   K \subset O_2 \subset \overline{O_2} \subset O_1.
   \end{equation}
    Using the definition of weak-convergence, we know that there exists a $j_0 \in \N$ such that for all $j > j_0$, we have
    \begin{equation}
    \mu_j(O_2)\leq \mu_j(\overline{O_2}) \leq \mu(\overline{O_2})+ \frac{\e}{2} \leq  \mu(O_1)+ \frac{\e}{2} \leq \e. 
    \end{equation}
    Also, using regularity of $\mu_1, \ldots, \mu_{j_0}$, there exist open subsets $O_1', \ldots, O_{j_0}'$ containing $K$ such that
    \begin{equation}
    \mu_j(O_j') \leq \e,
    \end{equation}
    for all $j=1, \ldots, j_0$. The lemma in this case now follows by taking 
    \begin{equation}
    O= O_2 \cap O_1' \cap  \cdots \cap O_{j_0}'. 
    \end{equation}
\medskip
    
   {\bf Case 2:} Assume that $K$ is not compact. Using $\sigma$-compactness of $K$, write $K$ as countable union of compact sets $\{K_j: j \in \N\}$. Using Case 1 for each $K_j$, we find an open set $O_j$ containing $K_j$ such that
    \begin{equation}
    \mu(O_j) \leq \frac{\e}{2^j}, \quad  \mu_i(O_j) \leq \frac{\e}{2^j},
    \end{equation}
    for all $ i \in \N$. The lemma in this case follows by taking $O= \bigcup_j O_j$. Hence, the lemma follows.
\end{proof}

Using Lemma~\ref{lem: sequence zero measure sigma compact} and Theorem~\ref{thm: DM93 Thm 1}, one can show that any limit measure of $(\widetilde\lambda_i)_i$,  and therefore of $(\lambda_i)_i$, must give zero measure to $\cS(U)$. This, combined with earlier discussion, implies that the limit must equal $\nu$, which completes the main steps of proof of Theorem~\ref{thm: gen Dani Margulis}. We now proceed with the formal proof.

\begin{proof}[Proof of Theorem~\ref{thm: gen Dani Margulis}] 
Fix a right invariant metric on the group $G$, and denote it by $d_G(\cdot, \cdot)$. Also fix a Haar measure on $G$, denote it by $m_G$. We need to show that the limit of the sequence of measures $(\lambda_i)_i$ exists and equals $\nu$. By the Banach--Alaoglu theorem, $(\lambda_i)_i$ is relatively compact in the vague topology. Hence, every subsequence of $(\lambda_i)_i$ admits a further subsequence converging vaguely to a measure $\lambda_\infty$ with total mass $\lambda_\infty(G/\Gamma)\leq 1$. To prove convergence, it therefore suffices to show that every such limit measure equals $\nu$. Accordingly, by passing to a subsequence, we may assume that $\lambda_i \to \lambda_\infty$ vaguely. We will show that $\lambda_\infty = \nu$. It will be convenient to denote by $u^{(i,j)}_{t_j}$ the element $u_{t}^{(i)}$ where $t \in \R^k$ has all zero entries except the $j$-th entry which equals $t_j$.

    \medskip

    {\bf Step 1: Control escape of mass.} We first show that $\lambda_\infty$ is a probability measure. 
Let $\delta>0$ be given. Fix a subset $F_0$ of $G/\Gamma$ such that 
    \begin{equation}\mu(F_0)> 1-\delta, \quad \mu_i(F_0)> 1-\delta \quad \text{ for all } i.\end{equation}
    Using Theorem~\ref{thm: DM93 Thm 6.1} iteratively, we find a sequence of compact subsets $(F_j)_{j \in \N}$ such that for every $j \geq 0$, for any unipotent one-parameter subgroup $\{v_t\}$ of $G$, any $x \in F_j$, and any $T \geq 0$, 
    \begin{equation}
    m_{\R} \left\{ t \in [0,T]: v_tx \in F_{j+1}\right\}  \geq (1-\delta) T.
    \end{equation}
   Then one checks that the set $\{(x,t) \in G/\Gamma \times I_i: u_t^{(i)} x \notin F_k\}$ is contained in the sets $\Omega_0({i}) \cup \cdots \cup \Omega_{k}(i)$, where for $j \geq 1$, $\Omega_j(i)$ denotes the set of all $(x,t)$ in $G/\Gamma \times I_i$ such that
    \begin{align}
        u^{(i,j-1)}_{t_{j-1}} \cdots  u^{(i,1)}_{t_{1}} x \in F_{j-1} , \qquad
        u^{(i,j)}_{t_{j}} \cdots  u^{(i,1)}_{t_{1}} x \notin F_{j}, 
    \end{align}
    and $\Omega_0(i)$ denote the set of all $(x,t)$ in $G/\Gamma \times I_i$ such that
   $x \notin F_{0}$. 
 
    Using Fubini's theorem and the definition of $F_j$, we know that $\mu_i \otimes m_{\R^k} (\Omega_{0}(i)) \leq \delta \cdot m_{\R^k}(I_i)$ and for all $1 \leq j \leq k$ and $i \in \N$, we have
        \begin{align}
        \mu_i \otimes m_{\R^k} (\Omega_{j}(i)) 
        &\leq \frac{m_{\R^k}(I_i)}{|\beta_j^{(i)}-\alpha_j^{(i)}|} \bigg( \sup_{x\in F_{j-1}} \left(m_{\R}\{ t \in [0, -\alpha_j^{(i)}]: u_{-t}^{(i,j)} x \notin F_j\} \right) \notag \\ & \qquad\qquad\qquad\qquad + \sup_{x\in F_{j-1}} \left(m_{\R} \{ t \in [0,\beta_j^{(i)}]: u_{t}^{(i,j)} x \notin F_j\} \right) \bigg) \notag \\
        &\leq \delta m_{\R^k}(I_i),
    \end{align}
    for all $1 \leq j \leq k$. Therefore,
    \begin{align}
      \lambda_\infty(G/\Gamma) &\geq \lambda_\infty(F_k) \geq \lim_{i \rightarrow \infty}  \frac{1}{m_{\R^k}(I_i)} \int_{I_i} \int_{G/\Gamma} \ind_{F_k}(u^{(i)}(t) x) \, d\mu_i(x) \, dt \notag \\
      &\geq 1- \lim_{i \rightarrow \infty} \sum_{j=0}^k \frac{1}{m_{\R^k}(I_i)} \int_{I_i} \int_{G/\Gamma} \ind_{\Omega_j(i)}(x, t) \, d\mu_i(x) \, dt  \notag \\
      &\geq 1- (k+1) \delta.
    \end{align}
    Since $\delta>0$ is arbitrary, we see that $\lambda_\infty(G/\Gamma)=1$.     
    
\medskip
    {\bf Step 2: Establish unipotent invariance.} We now show that $\lambda_\infty$ is invariant under the action of $U$. To see this, fix $s \in \R^k$ and a compactly supported smooth function $f$ on $G/\Gamma$. Then 
    \begin{align}
        \left| \lambda_\infty(f \circ u_s) - \lambda_\infty(f) \right| &\leq \left| \lambda_\infty(f \circ u_s) - \lambda_i(f \circ u_s) \right| + \left| \lambda_i(f \circ u_s) - \lambda_i(f \circ u^{(i)}_s)  \right| \nonumber \\
        &+ \left| \lambda_i(f \circ u^{(i)}_s)  - \lambda_i(f) \right| + \left| \lambda_i(f) - \lambda_\infty(f) \right|. \label{eq: yay 1}
    \end{align}
    Since $\lambda_i \rightarrow \lambda_\infty$, we see that the first and fourth terms converge to zero as $i \rightarrow \infty$. For the second term, note that
    \begin{align}
        \left| \lambda_i(f \circ u_s) - \lambda_i(f \circ u^{(i)}_s)  \right| \leq \|f\|_{C^1} d_G(u^{(i)}_s, u_s).
    \end{align}
    Since $u^{(i)}_s \rightarrow u_s$, we see that the second term also converges to zero as $i \rightarrow \infty$.
    For the third term, note that 
    \begin{align}
        \left| \lambda_i(f \circ u^{(i)}(s))  - \lambda_i(f) \right| &\leq  \frac{1}{m_{\R^k}(I_i)} \int_{G/\Gamma}  \left|   \int_{I_i + s} f(u^{(i)}(t) x) \, d\mu_i(x) \, dt -  \int_{I_i} f(u^{(i)}(t) x) \, d\mu_i(x) \right| \notag \\
        & \leq \frac{m_{\R^k}((I_i+s) \setminus I_i) + m_{\R^k}( I_i \setminus (I_i+s) ) }{m_{\R^k}(I_i)} \|f\|_{C^0} \rightarrow 0,
    \end{align}
    as $i \rightarrow \infty.$ Thus, on taking limit $i \rightarrow \infty$ in \eqref{eq: yay 1}, we see that 
    \begin{equation}
     \lambda_\infty(f \circ u_s) = \lambda_\infty(f).
    \end{equation}
    Since $s \in \R^k$ and the compactly supported smooth function $f$ on $G/\Gamma$ were arbitrary, we see that $\lambda_\infty$ is invariant under $U$.

\medskip

    {\bf Step 3: Reduction to measures satisfying $\mu_i(\cS(U))=0$.} In this step we show that without loss of generality, we may assume that $\mu_i$ satisfies $\mu_i(\cS(U))=0$ for all $i$. 
    To prove this, we fix $\e_i>0$, for each $i \in \N$, small enough so that for all $g \in G$ with $d_G(g,e) < \e_i$, we have
    \begin{equation}
    d_G(u^{(i)}_t g, u^{(i)}_t) \leq \frac{1}{2^i},
    \end{equation}
    for all $t \in I_i$. Then, define
    \begin{equation}
    \widetilde{\mu}_i = \frac{1}{m_G(B_{\e_i}(e))} \int_{B_{\e_i}(e)} g_*\mu_i \, dm_G(g),
    \end{equation}
    where $B_\e(e)= \{g \in G: d_G(g,e)< \e \}$. Note that for any smooth function $f $ on $G/\Gamma$, we have
    \begin{align}
        &\left| \frac{1}{m_{\R^k}(I_i)} \int_{I_i} (u^{(i)}_t)_*{\mu}_i(f) \, dt- \frac{1}{m_{\R^k}(I_i)} \int_{I_i} (u^{(i)}_t)_*\widetilde{\mu}_i(f) \, dt \right| \notag \\
        &\leq \frac{1}{m_{\R^k}(I_i)}  \int_{I_i} \int_{G/\Gamma} \left|  f(u^{(i)}_t x ) - \frac{1}{m_G(B_{\e_i}(e))} \int_{B_{\e_i}(e)} f(u^{(i)}_t g x )  \,dm_G(g)  \right| \, d\mu_i(x) dt \notag \\
        &\leq \frac{1}{m_{\R^k}(I_i)}  \int_{I_i} \int_{G/\Gamma} \|f\|_{C^1} d_G(u^{(i)}_t g, u^{(i)}_t ) \, d\mu_i(x) dt \leq  \frac{\|f\|_{C^1}}{2^i} \rightarrow 0.
    \end{align}
Therefore, the limit of \eqref{eq: def measures appendix 2} exists and equals $\lambda_\infty$. Along the same lines, one verifies that $\widetilde{\mu}_i \to \mu$ as $i \to \infty$. We have thus shown that the conclusion of Theorem~\ref{thm: gen Dani Margulis} holds for $(\mu_i)_i$ if it holds for  $(\widetilde{\mu}_i)_i$. It therefore suffices to prove Theorem~\ref{thm: gen Dani Margulis} for $(\widetilde{\mu}_i)_i$.

Since, by definition, $\widetilde{\mu}_i$ is absolutely continuous with respect to $\nu$, Lemma~\ref{lem: zero measure of sub hom spaces} implies that $\widetilde{\mu}_i(\cS(U))=0$ for every $i$. This is the only additional property we require to complete the proof. We will therefore consider in the following measures $\mu_i$ under the previous hypotheses plus $\mu_i(\cS(U))=0$ (and the $\widetilde{\mu}_i$ constructed above would be an example of such a measure).

\medskip

{\bf Step 4: Show that the singular set is null.} We now prove that $\lambda_\infty(\cS(U))=0$.
Fix \(\delta>0\). Since \(\mathcal S(U)\) is \(\sigma\)-compact (see e.g.\ \cite[\S2,3]{DM93}), by Lemma~\ref{lem: sequence zero measure sigma compact} applied with \(K=\mathcal S(U)\) and the fact that $\mu_i$ converges to the probability measure $\mu$, we see that there exists a compact set $F_0$ contained in $\cG(U)$ such that $\mu_i(F_0) \geq 1-\delta$ for all $i$.

Now we use Theorems~\ref{thm: DM93 Thm 6.1} and~\ref{thm: DM93 Thm 1} recursively to get a family of compact subsets $(F_j)_{j \in \N}$ contained in $\cG(U)$ such that for every $j \geq 0$, for any unipotent one-parameter subgroup $\{v_t\}$ of $G$, any $x \in F_j$, and any $T \geq 0$, we have
\begin{equation}
m_{\R}\{t \in [0,T]: v_t x \in F_{j+1}\} \geq (1-\delta)T.
\end{equation}

Then one checks that the set $\{(x,t) \in G/\Gamma \times I_i: u_t^{(i)} x \notin F_k\}$ is contained in the sets $\Omega_0({i}) \cup \cdots \cup \Omega_{k}(i)$, where for $j \geq 1$, $\Omega_j(i)$ denotes the set of all $(x,t)$ in $G/\Gamma \times I_i$ such that
    \begin{align}
        u^{(i,j-1)}_{t_{j-1}} \cdots  u^{(i,1)}_{t_{1}} x \in F_{j-1} ,\qquad 
        u^{(i,j)}_{t_{j}} \cdots  u^{(i,1)}_{t_{1}} x \notin F_{j}, 
    \end{align}
    and $\Omega_0(i)$ denote the set of all $(x,t)$ in $G/\Gamma \times I_i$ such that $x \notin F_{0}$. 
    
    Using Fubini's theorem and the definition of $F_j$, we know that 
    \begin{align}
    \mu_i \otimes m_{\R^k} (\Omega_{0}(i)) \leq \delta \cdot m_{\R^k}(I_i) .
    \end{align}
     Furthermore, for all $1 \leq j \leq k$ and $i \in \N$, we have
        \begin{align}
        &\mu_i \otimes m_{\R^k} (\Omega_{j}(i)) \notag \\
        &\leq \frac{m_{\R^k}(I_i)}{|\beta_j^{(i)}-\alpha_j^{(i)}|} \bigg( \sup_{x\in F_{j-1}} \left(m_{\R}\{ t \in [0, -\alpha_j^{(i)}]: u_{-t}^{(i,j)} x \notin F_j\} \right) \\ & \qquad\qquad\qquad\qquad + \sup_{x\in F_{j-1}} \left(m_{\R}\{ t \in [0,\beta_j^{(i)}]: u_{t}^{(i,j)} x \notin F_j\} \right) \bigg) \notag \\
        &\leq \delta m_{\R^k}(I_i),
    \end{align}
    for all $1 \leq j \leq k$. Therefore,
    \begin{align}
      \lambda_\infty(\cG(U)) &\geq \lambda_\infty(F_k) \geq \lim_{i \rightarrow \infty}  \frac{1}{m_{\R^k}(I_i)} \int_{I_i} \int_{G/\Gamma} \ind_{F_k}(u^{(i)}(t) x) \, d\mu_i(x) \, dt \notag \\
      &\geq 1- \lim_{i \rightarrow \infty} \sum_{j=0}^k \frac{1}{m_{\R^k}(I_i)} \int_{I_i} \int_{G/\Gamma} \ind_{\Omega_j(i)}(x,t) \, d\mu_i(x) \, dt  \notag \\
      &\geq 1- (k+1) \delta.
    \end{align}
    Since $\delta>0$ is arbitrary, we see that $\lambda_\infty(\cG(U)) =1$. In other words, we have $\lambda_\infty(\cS(U))=0$. 

    \medskip

{\bf Step 5: Conclusion.} Using Ratner's theorem \cites{Ratner91A} and ergodic decomposition theorems for finite invariant measures, the previous observations (Step 1, 2 and 4) imply that $\lambda_\infty$ equals $\nu$ and, as noted earlier, this completes the proof. Hence, the theorem follows.
\end{proof}

\section{Limit distributions for slowly expanding horospheres}\label{sec:Main}
For $n \geq 1$, define
\begin{align}
    G_{n+1}= \SL_{n+1}(\R), \quad \Gamma_{n+1} = \SL_{n+1}(\Z),
\end{align}
\begin{align}
    \widetilde{G}_{n+1} = \SL_{n+1}(\R)\ltimes\R^{n+1}, \quad \widetilde{\Gamma}_{n+1}= \SL_{n+1}(\Z)\ltimes\Z^{n+1},
\end{align}
where the multiplication law of $\widetilde{G}_{n+1}$ is given by considering $\widetilde{G}_{n+1}$ as subgroup of $\SL_{n+2}(\R)$ via the homomorphism
\begin{equation}
[A,b] \mapsto \begin{pmatrix}
    A & b \\ 0 & 1
\end{pmatrix}.
\end{equation}
That is, 
\begin{equation}
[A_1,b_1] [A_2,b_2] =[A_1A_2,b_1+A_1 b_2] . 
\end{equation}
Define the homogeneous spaces
\begin{equation}
\X_{n+1}=G_{n+1}/\Gamma_{n+1},\qquad \tXn = \widetilde{G}_{n+1}/\widetilde{\Gamma}_{n+1},
\end{equation}
and denote by $\mu_{\X_{n+1}}$ and $\mu_{\tXn}$ the unique $G_{n+1}$- and $\widetilde{G}_{n+1}$-invariant probability measures on $\X_{n+1}$ and $\tXn$, respectively.

For $s\in \R^n$ and $t>0$, define
\begin{align}
    \label{eq: def u n}
    u(s)= u_{n+1}(s)= \begin{pmatrix}
        \id_n & s \\ 0 & 1
    \end{pmatrix}, \quad a(t)= a_{n+1}(t)= \begin{pmatrix}
        t^{1/n}\id_n & 0 \\ 0 & t^{-1}
    \end{pmatrix}.
\end{align} 
Throughout this paper, we will consider $G_{n+1}$ as subgroup of $\widetilde{G}_{n+1}$ via the homomorphism
\begin{equation}
g \mapsto [g, 0],
\end{equation}
and therefore both $u(s), a(t)$ will be viewed as elements of $G_{n+1}$ or $\widetilde{G}_{n+1}$ as appropriate.

Let $\tHn$ denote the subgroup
\begin{equation}
\tHn= \left\{ \left[ \begin{pmatrix}
    A & 0 \\ {}^{\top}\! w & 1
\end{pmatrix}, \zeta \right]: A \in \SL_n(\R), w \in \R^n, \zeta \in \R^{n+1} \right\}.
\end{equation}
The group $\tHn$ is generated by unipotent elements and therefore, by Ratner's theorem~\cites{Ratner91A}, for every $x \in \tXn$, we have that the closure of the set $\tHn \cdot x$ is a homogeneous space and carries a unique $\tHn$-invariant probability measure. Throughout this paper, we will denote this measure by $ \mu_{\tHn \cdot x}$.

\medskip

Define the measure $\tnu_{Q,x}=\tnu_{Q,\Delta,\epsilon,x,\lambda}$ on $\tXn$ by 
     \begin{align}  \label{eq: def nu}
    \tnu_{Q,x}(f) = \int_{\R^n} f\left( \left[ a(\Delta)   u(-\e \xi), \begin{pmatrix}
            { \zer} \\ -Q\Delta^{-1}
        \end{pmatrix} \right] x \right) \, d\lambda(\xi),
        \end{align}
       with $f:\tXn \to \R$ bounded continuous. 
Similarly, define $\mu^{(\sigma)}_{\lambda,x}$ by
 \begin{align}
\label{eq: thm : main dynamical 2}
 \mu^{(\sigma)}_{\lambda,x}(f) &= 
 \int_{\R^n} \int_{\tXn} f(  M_{\sigma \xi} \, y)\, d\mu_{\tHn \cdot x}(y) \ d{\lambda}(\xi),
 \end{align}
 where $M_\xi\in\SL(n+1,\R)$ is any choice of matrix such that
 \begin{equation}\label{Mxi}
M_\xi \begin{pmatrix} 0 \\ 1 \end{pmatrix} = - \begin{pmatrix} \xi \\ 0 \end{pmatrix} .
\end{equation}
The $\tHn$-invariance of $\mu_{\tHn \cdot x}$ implies that the definition \eqref{eq: thm : main dynamical 2} is independent of the choice $M_\xi$. (To see this, let $M_\xi'$ be a second matrix satisfying \eqref{Mxi}. Then 
 \begin{equation}\label{Mxi2}
M_\xi^{-1} M_\xi' \begin{pmatrix} 0 \\ 1 \end{pmatrix} = \begin{pmatrix} 0 \\ 1 \end{pmatrix} ,
\end{equation}
and thus 
\begin{equation}\label{Mxi3}
M_\xi' = M_\xi \begin{pmatrix}
    A_0 & 0 \\ {}^{\top}\! w_0 & 1
\end{pmatrix}
\end{equation}
for suitable $A_0,w_0$.) A convenient choice of $M_\xi$ is 
 \begin{equation}
M_\xi = 
\begin{pmatrix}
     \id_{n-1}  &0 & -\xi' \\ 0&0& -\xi_n \\ 0& \xi_n^{-1} & 0 
     \end{pmatrix}  
     ,\qquad \xi=\begin{pmatrix} \xi' \\ \xi_n \end{pmatrix},
\end{equation}
if $\xi_n \neq 0$. In the following $\lambda$ is assumed to be absolutely continuous with respect to Lebesgue measure on $\R^n$. The above choice therefore gives a complete parametrisation on a set of full measure.

\medskip

The following equidistribution theorem is the main dynamical result of the present study. It serves as the key input for all limit theorems stated in the introduction.

\begin{thm}
    \label{thm: main dynamical theorem}
     Let $\alpha \in \R^n$ and $\lambda$ a probability measure on $\R^n$ absolutely continuous with respect to Lebesgue measure, and $\sigma\in (0,\infty]$. Fix a sequence of $(Q,\Delta,\epsilon)$ satisfying \eqref{scalim} and $\e \Delta^{1/n}\to \sigma$. Then for $x= u(-\alpha)\widetilde{\Gamma}_{n+1}$, we have:

\begin{enumerate}
\renewcommand{\labelenumi}{\rm (\roman{enumi})}
\item 
If  $\alpha \notin \Q^n$ or $\sigma=\infty$, then 
\begin{equation}\label{infact}
 \tnu_{Q,x} \tovague \mu_{\tXn}.
 \end{equation}

\item
If $\alpha \in \Q^n$ and $\sigma<\infty$, then $\tHn \cdot x$ is closed and it satisfies
\begin{equation}\label{infact 2}
 \tnu_{Q,x} \tovague \mu^{(\sigma)}_{\lambda,x} .
\end{equation} 
\end{enumerate}
 Moreover for $\alpha = p/q \in \Q^n$ with $\gcd(p,q)=1$, 
 \begin{equation}
 \label{eq:rational case}
     \mu^{(\sigma)}_{\lambda,x} = \mu^{(q\sigma)}_{\lambda,\widetilde{\Gamma}_{n+1}},
 \end{equation}
 and
 \begin{equation}
 \label{eq; convergence of rational case}
   \mu^{(\sigma)}_{\lambda,x} \tovague \mu_{\tXn},
   \end{equation}
   as $\sigma \to \infty$.
\end{thm}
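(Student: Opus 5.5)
The plan is to write the group element in \eqref{eq: def nu} as a unipotent orbit and then apply Theorem~\ref{thm: gen Dani Margulis} on small boxes of $\xi$. First I would use $[A,b]=[\id,b][A,0]$, the relation $(\zer,-Q\Delta^{-1})^{\top}=a(\Delta)(\zer,-Q)^{\top}$, and the conjugation identity $a(\Delta)u(s)a(\Delta)^{-1}=u(\Delta^{1+1/n}s)$. Together these give
\begin{equation*}
\left[ a(\Delta)u(-\e\xi), \begin{pmatrix} \zer \\ -Q\Delta^{-1}\end{pmatrix}\right] = v_{\Delta}(-\e\Delta^{1+1/n}\xi)\, g_\Delta ,
\end{equation*}
where $g_\Delta$ is independent of $\xi$, and $v_\Delta(t)=[u(t),(c_\Delta t,0)]$ with a scalar $c_\Delta$ built from $Q,\Delta$. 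The map $t\mapsto v_\Delta(t)$ is a homomorphism from $\R^n$, because $u(t)$ fixes vectors of the form $(y,0)$. Hence $\tnu_{Q,x}$ is the push-forward of $\lambda$ under $\xi\mapsto v_\Delta(-\e\Delta^{1+1/n}\xi)g_\Delta x$. Since $\lambda$ is absolutely continuous, I would approximate its density by step functions on small cubes $\xi_0+[0,r]^n$. On each cube the parameter $t$ then ranges over a box of side $\e\Delta^{1+1/n}r\sim\sigma\Delta r\to\infty$, and the starting point is $v_\Delta(-\e\Delta^{1+1/n}\xi_0)g_\Delta x$.

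Second, I would normalise the subgroups. I would conjugate (or rescale $t$ in each coordinate) so that the family $v_\Delta$ converges to a limiting unipotent group $U$. Depending on the relative size of the translational speed $c_\Delta$, the limit $U$ is either the horospherical group $\{u(t)\}$, or a group that also moves the translation part in $\R^n\times\{0\}$. The starting points, viewed as measures, must converge weakly. To arrange this I would thicken each starting point by a tiny average over a short box in the $\xi$-variable itself, exactly as in Step~3 of the proof of Theorem~\ref{thm: gen Dani Margulis}. Their projections to $\X_{n+1}$ are pieces of the expanding horosphere $a(\Delta)u(-\e\xi)u(-\alpha)\Gamma_{n+1}$, which equidistribute to $\mu_{\X_{n+1}}$ by the classical theorem because $\e\Delta^{1+1/n}\to\infty$. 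The limiting initial measure $\mu$ is therefore Haar on the base, with some distribution on the torus fibre.

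Third, the case split. If $\alpha\notin\Q^n$ or $\sigma=\infty$, I would show that the fibre distribution of $\mu$ is absolutely continuous, i.e.\ it does not concentrate on torsion translates. Then $\mu(\cS(U))=0$, and Theorem~\ref{thm: gen Dani Margulis} gives \eqref{infact}. If $\alpha=p/q$ and $\sigma<\infty$, then $x$ has rational translation part. In that case $\tHn\cdot x$ is closed: it is the orbit of a finite-index subgroup stabilising a torsion point, a lift of $\tHn\Gamma_{n+1}$. The linear drift $\xi\mapsto M_{\sigma\xi}$ survives in the limit because $\e\Delta^{1/n}\to\sigma$, so I would run the same argument inside the closed orbit of $M_{\sigma\xi}\tHn$, which gives \eqref{infact 2}.

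For \eqref{eq:rational case} I would conjugate $x=u(-p/q)\widetilde\Gamma_{n+1}$ by a diagonal element that scales by $q$, which turns $\mu_{\tHn x}$ into $\mu_{\tHn\widetilde\Gamma_{n+1}}$ and $\sigma$ into $q\sigma$. For \eqref{eq; convergence of rational case} I would note that as $\sigma\to\infty$ the measure $\mu^{(\sigma)}_{\lambda,x}$ is an average of translates $M_{\sigma\xi}$ of a closed $\tHn$-orbit along an expanding direction. This follows from Theorem~\ref{thm: gen Dani Margulis} again, or from mixing.

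The main obstacle is the second step: choosing the right normalisation of $v_\Delta$ so that a genuine limit group $U$ exists in every regime of $Q/\Delta$ versus $\e\Delta^{1+1/n}$, and then verifying $\mu(\cS(U))=0$. Concretely, the points are pushed far along the fibre direction by $Q/\Delta$, so I must show the torus coordinate becomes equidistributed or lands generically. I would first try to handle this via Weyl equidistribution of $\xi\mapsto\e Q\xi-\ldots$ modulo $\Z^n$ on the fibre when $\alpha$ is irrational.
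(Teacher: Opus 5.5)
There is a genuine gap at the step your whole argument relies on. In your second step you assert that the projections $a(\Delta)u(-\e\xi)u(-\alpha)\Gamma_{n+1}$ equidistribute to $\mu_{\X_{n+1}}$ ``by the classical theorem because $\e\Delta^{1+1/n}\to\infty$''. This is false in case (ii). For $\alpha=0$ the lattice $a(\Delta)u(-\e\xi)\Z^{n+1}$ contains $a(\Delta)u(-\e\xi)\bfe_{n+1}=(-\e\Delta^{1/n}\xi,\Delta^{-1})\to-(\sigma\xi,0)$. So the mass concentrates on lattices with a primitive vector at $-(\sigma\xi,0)$, which is exactly the non-Haar limit $\int\!\int f(M_{\sigma\xi}y)\,d\mu_{H_{n+1}\cdot x}(y)\,d\lambda(\xi)$. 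Your own third step, where the drift $M_{\sigma\xi}$ survives, contradicts your second. The classical theorem is about horosphere pieces of fixed size, but here the piece has size $\e\to0$ when $\sigma<\infty$. Equivalently, it is a long piece based at $a(\Delta)x$, and that base point runs into the cusp.

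Even in case (i), equidistribution on the base is the content of the theorem, not an input. In the paper it comes from the factorisation \eqref{eq: lem reduction 1}. There $a(\Delta)u(-\e\xi)$ is a factor converging to an admissible $M_{\sigma\xi}$, times a factor tending to the identity, times elements of $H_{n+1}$: the one-parameter unipotent with entry $-\e^{-1}\Delta^{(n-1)/n}\xi_n^{-1}$ in position $(n+1,n)$, and $a_n(\Delta^{(n-1)/n})u_n(-\xi'/\xi_n)$. The paper then needs:
\begin{itemize}
\item Shah's equidistribution in $\widetilde{\X}_n\subset\X_{n+1}$;
\item Theorem~\ref{thm: gen Dani Margulis} for that one-parameter group, after checking $\mu_{\widetilde{\X}_n}(\cS(U))=0$, for irrational $\alpha$;
\item a thickening plus fibrewise unique-ergodicity argument inside the closed orbit $H_{n+1}\cdot x$, for rational $\alpha$;
\item Proposition~\ref{prop: single to joint equi}, to decouple the $\xi$-dependence;
\item for $\sigma=\infty$ with $\e\to0$, the monotonicity in $\e$ (Corollary~\ref{cor: Gen Dan Mar}) and the diagonal argument of Proposition~\ref{prop: double}.
\end{itemize}
Your plan contains none of this mechanism.

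The normalisation you flag as the main obstacle cannot be carried out the way you describe, and there is no dichotomy of regimes. Your factorisation is correct with $c_\Delta=Q/\Delta$, but $c_\Delta\to\infty$ under \eqref{scalim} in every case. Any reparametrisation $t=L_\Delta s$ for which $v_\Delta(L_\Delta s)=[u(L_\Delta s),(c_\Delta L_\Delta s,0)]$ converges forces $L_\Delta=O(c_\Delta^{-1})\to0$. So the limit group lies in the normal translation subgroup $\{[\id_{n+1},\zeta]:\zeta\in\R^{n+1}\}$, and bounded conjugations do not change this. Its orbits are the compact fibre tori, so $\cS(U)=\tXn$ and Theorem~\ref{thm: gen Dani Margulis} gives nothing.

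Reaching $\{u(t)\}$ or $\{[u(t),(\kappa t,0)]\}$ instead requires conjugating by a divergent element such as $[\id_{n+1},(\zer,-Q\Delta^{-1})]$ or $a(Q/(\kappa\Delta))$. Pushing measures forward by divergent elements is uncontrolled: even the fibre translations $f\mapsto f\circ[\id_{n+1},\zeta]$ with $|\zeta|\to\infty$ are not equicontinuous on $\tXn$.

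The paper uses your rescaled group only to get invariance. Shifting $\xi$ by $-\zeta/(\e\Delta^{1/n}Q)$, which is invisible in the limit by Lemma~\ref{lem: reduction 1 0}, multiplies by $v_\Delta(\zeta/c_\Delta)=[u(\zeta\Delta/Q),(\zeta,0)]\to[\id_{n+1},(\zeta,0)]$. Hence every limit is $\widetilde{U}_{n+1}$-invariant (Lemma~\ref{lem:reduction 1 1}). Since the base limit is $H_{n+1}$-invariant, the $\widetilde{U}_{n+1}$-action is uniquely ergodic on almost every fibre, which forces Lebesgue conditional measures (Proposition~\ref{lem: reduction to X}). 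This disposes of the torus completely, without Weyl sums or genericity of torus coordinates.

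Finally, $x=[u(-\alpha),0]\widetilde{\Gamma}_{n+1}$ has zero translation part, not a rational one. Closedness of $\tHn\cdot x$ and \eqref{eq:rational case} come from the linear part. They follow from $H_{n+1}u(-\alpha)\Gamma_{n+1}=\operatorname{diag}(\id_{n-1},q^{-1},q)\,H_{n+1}\Gamma_{n+1}$, obtained via an $\SL_n(\Z)$ and an $\SL_2(\Z)$ reduction, and from the fact that this diagonal matrix normalises $H_{n+1}$.
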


Convergence is understood here in the vague topology, i.e., for test functions $f\in C_c(\tXn)$. Since the limits are probability measures, the above convergence also holds in the weak topology, i.e., for all bounded continuous test functions. 

\begin{rem}
The convergence of the pushforward of $\mu^{(\sigma)}_{\lambda,x}$ under the natural projection $\pi:\tXn \to \X_{n+1}$ has been studied in several settings. In the case $n=1$ and $\epsilon = \Delta^{1/2+\delta}$, Hejhal \cites{Hejhal} and Strömbergsson \cites{Str04} studied the resulting distribution and proved convergence to $\mu_{\X_{n+1}}$. More recently, Shah and Yang \cite{ShahYang} considered a more general setting in which $\lambda$ is supported on a non-degenerate submanifold, and studied this construction on general homogeneous spaces in the finite-$\sigma$ regime. They prove an abstract result which, as a consequence, yields convergence of $\pi_*(\mu^{(\sigma)}_{\lambda,x})$. Theorem~\ref{thm: main dynamical theorem} refines this picture by giving an explicit description of the limit measures, and by extending the analysis to $\tXn$.
\end{rem}

\begin{rem}
The probability measure $\mu^{(\sigma)}_{\lambda,x}$ is invariant under the action of the horospherical subgroup $U=\{ u(s) : s\in\R^n\}$. This follows from the fact that $U$ stabilises $(\begin{smallmatrix} \xi \\ 0 \end{smallmatrix})$, and hence
\begin{equation}
u(s) M_\xi = M_\xi 
\begin{pmatrix}
    A_0 & 0 \\ {}^{\top}\! w_0 & 1
\end{pmatrix}
\end{equation}
for some $A_0,w_0$.
Note, however, that the $U$-action is {\em not} ergodic unless the closure of $\tHn\cdot x$ is $\tXn$. The ergodic decomposition is apparent from definition \eqref{eq: thm : main dynamical 2}. There is no contradiction with the Mozes-Shah Theorem \cite{MozesShah}, since the measures $\tnu_{Q,x}$ (with limit point $\mu^{(\sigma)}_{\lambda,x}$) are not $U$-invariant.
\end{rem}

\begin{rem}
    For $n=1$ and $\alpha=0$ (i.e., for $x=\widetilde{\Gamma}_2$), the measure
$\mu^{(\sigma)}_{\lambda,x}$ is given explicitly by
    \begin{align}
    \label{eq:def mu sigma}
        \mu^{(\sigma)}_{\lambda,x}(f) &= \int_{\R} \int_{\T} \int_{\T^2}f\left(  \begin{pmatrix}
       0 & - \sigma\xi\\ (\sigma\xi)^{-1} &0
    \end{pmatrix} \begin{pmatrix}
        1 &0 \\ v &1
    \end{pmatrix} \left[\id_2, \zeta \right] \widetilde{\Gamma}_2 \right) \,\, d\zeta \,  dv \, d\lambda(\xi) \notag \\
    &= \int_{\R} \int_{\T} \int_{\T^2}f\left(  \begin{pmatrix}
        \sigma \xi &0\\  0&(\sigma \xi)^{-1}
    \end{pmatrix}  \begin{pmatrix}
        1 & v  \\ 0&1
    \end{pmatrix} \left[\id_2, \zeta \right]\widetilde{\Gamma}_2  \right) \, d\zeta \,  dv \, d\lambda(\xi),
    \end{align}
    where $\T=\R/\Z$. The last equality follows from 
    \begin{equation} \widetilde{\Gamma}_2=  \begin{pmatrix}
        0 &1\\ -1 &0
    \end{pmatrix}\widetilde{\Gamma}_2.\end{equation}
    and the fact that $dv$ and $d\zeta$ are invariant under 
    \begin{equation}
v\mapsto -v, \qquad \zeta \mapsto \begin{pmatrix}
        0 &1\\ -1 &0
    \end{pmatrix} \zeta .
\end{equation}
\end{rem}

\medskip

The proof of Theorem~\ref{thm: main dynamical theorem} is divided into several steps, which we outline below.

\subsection*{Step A: Reduction to $\SL_{n+1}(\R)/\SL_{n+1}(\Z)$}
\label{subsec: Reduction to SL nR}
Let
\begin{align}
    H_{n+1} = \left\{\begin{pmatrix}
    A & 0\\ {}^{\top}\! w & 1
\end{pmatrix}: A \in \SL_n(\R), w \in \R^n \right\}.
\end{align}
As for $\tHn$, the group $H_{n+1}$ is generated by unipotent elements in $\tHn$, and therefore by Ratner's theorem~\cites{Ratner91A}, for every $x \in \X_{n+1}$, we have that the closure of the set $H_{n+1} \cdot x$ is a homogeneous space and carries a unique $H_{n+1}$-invariant probability measure. Throughout this paper, we will denote this measure by
$\mu_{H_{n+1} \cdot x}$.

The first step in the proof of Theorem~\ref{thm: main dynamical theorem} is to show that it is enough to study the sequence of measures $\nu_{Q,x}$ on $\X_{n+1}$ defined by 
 \begin{align}  \label{eq: def nu small}
    \nu_{Q,x}(f) = \int_{\R^n} f\left( a(\Delta)   u(-\e \xi) x \right) \, d\lambda(\xi),
        \end{align}
        for $f \in C_c(\X_{n+1})$.
This is achieved by the following proposition, which is proved in Section~\ref{sec: Reduction to X}. 

\begin{prop}
    \label{lem: reduction to X}
     Let $\alpha \in \R^n$ and $\lambda$ a probability measure on $\R^n$ absolutely continuous with respect to Lebesgue measure, and $\sigma\in (0,\infty]$. Fix a sequence of $(Q,\Delta,\epsilon)$ satisfying \eqref{scalim} and $\e \Delta^{1/n}\to \sigma$. Assume that for every $x= u(-\alpha)\Gamma_{n+1}$, we have
     \begin{align}
     \label{eq:lem:reduction to X}
        \nu_{Q,x}(f) \to \begin{cases}
             \displaystyle
          \int_{\R^n} \int_{\X_{n+1}} f(  M_{\sigma \xi} \, y)\, d\mu_{H_{n+1} \cdot x}(y) \ d{\lambda}(\xi), \quad &\text{ if } \sigma < \infty \text{ and } \alpha \in \Q^n, \\[10pt]
           \mu_{\X_{n+1}}(f), \quad& \text{ otherwise, }
         \end{cases}
     \end{align}
   for all $f \in C_c(\X_{n+1})$. Then the conclusions \eqref{infact} and \eqref{infact 2} of Theorem~\ref{thm: main dynamical theorem} hold.
    \end{prop}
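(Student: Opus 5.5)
The plan is to identify every vague limit point of $(\tnu_{Q,x})$ via the torus-bundle structure of $\tXn\to\X_{n+1}$. The natural projection $\pi:\tXn\to\X_{n+1}$, $[g,w]\widetilde\Gamma_{n+1}\mapsto g\Gamma_{n+1}$, is proper, with fibres the tori $\R^{n+1}/g\Z^{n+1}$. Moreover $\pi_*\tnu_{Q,x}=\nu_{Q,\pi(x)}$, since the translation part $[\id,(0,-Q\Delta^{-1})]$ is killed by $\pi$. By hypothesis \eqref{eq:lem:reduction to X}, $\nu_{Q,\pi(x)}$ converges to a probability measure $\mu$: either $\mu_{\X_{n+1}}$, or $\int f(M_{\sigma\xi}y)\,d\mu_{H_{n+1}\cdot \pi(x)}(y)\,d\lambda(\xi)$. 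Hence $(\tnu_{Q,x})$ is tight (compact fibres), and every limit point $\tnu$ is a probability measure with $\pi_*\tnu=\mu$. It therefore suffices to show that $\tnu$ disintegrates over $\mu$ with Haar measure on $\mu$-almost every fibre. Indeed, $\mu_{\tXn}$ is exactly the fibrewise Haar lift of $\mu_{\X_{n+1}}$. Likewise, since $\tHn$ contains all translations $[\id,\zeta]$, the measure $\mu_{\tHn\cdot x}$ is the fibrewise Haar lift of $\mu_{H_{n+1}\cdot\pi(x)}$, and $\mu^{(\sigma)}_{\lambda,x}$ is the corresponding lift of the second limit. In case (ii), closedness of $\tHn\cdot x$ follows from that of $H_{n+1}\cdot\pi(x)$ together with compactness of the fibres.

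\textbf{Step 1: invariance under horizontal translations.} I would show that every limit point $\tnu$ is invariant under $[\id,(\eta,0)]$ for each $\eta\in\R^n$. The mechanism is that a tiny shift of $\xi$ produces a macroscopic horizontal translation. Put $a=a(\Delta)$ and $w=(0,-Q\Delta^{-1})$. Replacing $\xi$ by $\xi+h$ inserts the factor $u'=a\,u(-\e h)a^{-1}=u(-\e\Delta^{1+1/n}h)$. Using $[g,0][\id,v][g^{-1},0]=[\id,gv]$, one gets
\[
[\id,w][u',0]=[u',0][\id,u'^{-1}w],\qquad u'^{-1}w=w-(\e\Delta^{1/n}Q\,h,0).
\]
I would choose $h=-\eta/(\e\Delta^{1/n}Q)$. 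This tends to $0$, because $\e\Delta^{1/n}\to\sigma>0$ and $Q\to\infty$. Then the shifted integrand equals the original one translated by $[\id,(\eta,0)]$, up to a left factor $u'=u(\Delta Q^{-1}\eta)$ that tends to the identity since $\Delta/Q\to0$. Since $\lambda$ is absolutely continuous, $\|\lambda-\lambda(\cdot-h)\|_{TV}\to0$ as $h\to0$. Consequently, for $f\in C_c(\tXn)$ (uniformly continuous), $\tnu_{Q,x}(f\circ[\id,(\eta,0)])-\tnu_{Q,x}(f)\to0$, which gives the claimed invariance.

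\textbf{Step 2: upgrading to full fibre invariance.} Disintegrate $\tnu=\int \tnu_{g}\,d\mu(g\Gamma_{n+1})$, where $\tnu_g$ is a probability measure on the torus $\R^{n+1}/g\Z^{n+1}$. Horizontal translations preserve fibres, so Step 1 implies that $\mu$-almost every $\tnu_g$ is invariant under $\R^n\times\{0\}$. The closure of the image of $\R^n\times\{0\}$ in $\R^{n+1}/g\Z^{n+1}$ is the whole torus unless $g^{-1}(\R^n\times\{0\})$ is a rational hyperplane. Equivalently, the failure set is where ${}^{\top}\!g\,e_{n+1}$ is a real multiple of a vector of the dual lattice $\Z^{n+1}$. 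On the complement, each $\tnu_g$ is invariant under a dense subgroup of the torus, hence is Haar, and this completes the proof.

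\textbf{Main obstacle.} The main obstacle is showing that this exceptional set $E$ of lattices is $\mu$-null. For $\mu=\mu_{\X_{n+1}}$ this is clear: $E$ is a countable union of lower-dimensional subvarieties, so it has Haar measure zero. In the rational case I would argue as follows. For $g=M_{\sigma\xi}h$ with $h\in H_{n+1}$, note that ${}^{\top}\!h$ fixes $e_{n+1}$ (because $h$ has last column $e_{n+1}$). Hence ${}^{\top}\!g\,e_{n+1}={}^{\top}\!h\,{}^{\top}\!M_{\sigma\xi}e_{n+1}$. With the explicit $M_\xi$, the vector ${}^{\top}\!M_{\sigma\xi}e_{n+1}$ is proportional to $e_n$, so this becomes $\propto{}^{\top}\!h\,e_n$. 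I would then check that for $y=h\gamma\Gamma_{n+1}$ ranging over the closed orbit $H_{n+1}\cdot\pi(x)$, the condition ``${}^{\top}\!\gamma\,{}^{\top}\!h\,e_n\in\R\Z^{n+1}$'' cuts out, for each of the countably many dual vectors, a proper real-algebraic subset of $H_{n+1}$. That subset is null for the $H_{n+1}$-invariant measure, so $E$ is $\mu$-null after integrating over $\xi$. If this direct computation becomes messy, a fallback is the following. Step 1 already gives invariance under the horizontal translations $\R^n\times\{0\}$. Together with the $U$-invariance of $\mu^{(\sigma)}_{\lambda,x}$ and the conjugation action of $U$ on translations, which moves the horizontal hyperplane, this should allow one to generate all of $\R^{n+1}$ without any genericity computation.
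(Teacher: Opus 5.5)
Your outline follows the paper's proof. You use properness of $\pi$ and tightness, then invariance of every limit point under the horizontal translations $[\id,(\eta,0)]$ by shifting $\xi$ by $-\eta/(\e\Delta^{1/n}Q)$; this is Lemma~\ref{lem:reduction 1 1}, and your computation of it is correct. Finally you get Haar measure on almost every fibre because the horizontal subgroup is dense there. Your treatment of the case $\mu=\mu_{\X_{n+1}}$ is fine for all $n$.

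The step that fails is the rational case with $n=1$, $\alpha=p/q$, $\sigma<\infty$: there the exceptional set $E$ has full $\mu$-measure. Every $h\in H_{n+1}$ satisfies $h e_{n+1}=e_{n+1}$ (it is $h$, not ${}^{\top}\!h$, that fixes $e_{n+1}$), and $M_{\sigma\xi}e_{n+1}=-(\sigma\xi,0)^{\top}$. Hence each lattice $M_{\sigma\xi}h\,u(-\alpha)\Z^{n+1}$ in the support of $\mu$ contains the horizontal vector $M_{\sigma\xi}h\,u(-\alpha)(p,q)^{\top}=-q(\sigma\xi,0)^{\top}$.

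For $n\ge 2$ a single horizontal lattice vector does not make $\R^n\times\{0\}$ rational. Carrying out your computation there gives the condition ``last row of $A$ lies in $\R\Q^n$'', which is indeed null. For $n=1$ it does make the horizontal line rational. In your own formula, ${}^{\top}\!h\,e_1=e_1$ for every $h\in H_2$, and ${}^{\top}\!u(-\alpha)e_1=(1,-\alpha)^{\top}\in\R\Z^2$. So the condition holds identically, not on a proper subvariety.

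Consequently, over every point of the support the horizontal flow on the fibre is periodic. Step~1 only tells you that each $\tnu_g$ is invariant along closed horizontal circles. A limit putting all fibre mass on a single circle in each fibre would be equally consistent with Steps~1--2. This is exactly the regime that produces $p_{0,\cD}$ in Theorem~\ref{thm:main0}\,(i) and the case $\alpha=2$ of Theorem~\ref{thm:ABCZ}. The paper's own proof passes over the same point by asserting that the limit on $\X_{n+1}$ is $H_{n+1}$-invariant; in the rational case it is not (it is $U$-invariant).

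Your fallback does not close the gap. First, $U=\{u(s)\}$ fixes $\R^n\times\{0\}$ pointwise, so conjugating horizontal translations by $U$ yields only horizontal translations. Second, $U$-invariance is known for the target measure, not for an unidentified limit point $\tnu$. What is needed is an extra input: the transverse position of the shift $(0,-Q\Delta^{-1})$, relative to the horizontal rows, must equidistribute jointly with the base point. For $n=1$, $\alpha=0$, the decomposition \eqref{eq: lem reduction 1} gives
\[
\Bigl[a(\Delta)u(-\e\xi),\begin{pmatrix}0\\-Q\Delta^{-1}\end{pmatrix}\Bigr]\widetilde\Gamma_2=\Bigl[M_{\e\Delta\xi}\,u(\e\xi)\,v\bigl(-\tfrac{1}{\e\xi}\bigr),0\Bigr]\Bigl[\id,\begin{pmatrix}-\e\xi Q\\-Q\end{pmatrix}\Bigr]\widetilde\Gamma_2,\qquad v(t)=\begin{pmatrix}1&0\\t&1\end{pmatrix}.
\]
Now reduce $v(-1/(\e\xi))$ modulo $\Gamma_2$; this leaves the first coordinate of the translation unchanged. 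The base point becomes $M_{\e\Delta\xi}u(\e\xi)v(\theta)\Gamma_2$ with $\theta=\{-1/(\e\xi)\}$. The transverse coordinate of the shift, in units of the row spacing, is $\{-\e\xi Q\}+O(\e)$. Joint equidistribution of $(\xi,\{-1/(\e\xi)\},\{-\e\xi Q\})$ towards $\lambda$ times Lebesgue measure on $[0,1)^2$ follows from a van der Corput estimate, since $1/\e\to\infty$ and $\e Q\to\infty$. Combined with your Step~1, this yields the fibrewise Haar lift. An argument of this kind, or an equivalent one for general $p/q$, has to be supplied for $n=1$.
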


\subsection*{Step B: Joint equidistribution}
The goal of Step B is to prove Proposition~\ref{lem: reduction to X} for $\sigma< \infty$. The key observation in proving the latter is the following identity:
\begin{align}
   \nonumber 
     a(\Delta) u( - \e \xi ) &=  \begin{pmatrix}
        u_n(\xi' /\xi_n)&0  \\ 0 & 1
    \end{pmatrix}  \begin{pmatrix}
        \id_{n-1} &0&0 \\0 &0   &  -\e \Delta^{1/n}\xi_n   \\ 0& (\e \Delta^{1/n} \xi_n)^{-1} &0 
    \end{pmatrix} \begin{pmatrix}
        \id_{n-1} &0&0 \\ 0& 1      &  \e \Delta^{-(n-1)/n} \xi_n \\0 &0 &1
    \end{pmatrix}  \nonumber\\
    & \times \begin{pmatrix}
        \id_{n-1} &0&0 \\ 0&   1&0     \\ 0& -\e^{-1} \Delta^{(n-1)/n} \xi_n^{-1}& 1 
    \end{pmatrix}  \begin{pmatrix}
        a_n(\Delta^{(n-1)/n}) &0 \\ 0 & 1
    \end{pmatrix} \begin{pmatrix}
         u_n(-\xi' /\xi_n)&0  \\0 & 1
    \end{pmatrix}      .      \label{eq: lem reduction 1}
\end{align}
Let $\widetilde{\lambda}$ denote the measure on $\R^n$ obtained via pushforward of $\lambda$ under the map
\begin{equation}
\xi= (\xi', \xi_n)\mapsto (\xi'/\xi_n, \xi_n).
\end{equation}
It is clear that $\widetilde{\lambda}$ is absolutely continuous 
with respect to Lebesgue measure.

Using equality~\eqref{eq: lem reduction 1}, we see that 
equation~\eqref{eq:lem:reduction to X} holds for $\sigma<\infty$ once we show that
\begin{align}
\label{eq:lem: simplification via joint 2}
\int_{\R^n}
F\!\left(\zeta,\e\Delta^{1/n}\beta,
\frac{\e\beta}{\Delta^{(n-1)/n}},
\begin{pmatrix}
        a_n(\Delta^{(n-1)/n}) &0 \\ 0 & 1
    \end{pmatrix} \begin{pmatrix}
         u_n(-\zeta)&0  \\0 & 1
    \end{pmatrix} x
\right)
\,d\widetilde{\lambda}(\zeta,\beta)
\end{align}
converges to
\begin{align}
\label{eq:lem: simplification via joint 3}
\int_{\R^n}
\int_{\X_{n+1}}
F\!\left(\zeta,\sigma\beta,0,y\right)
\,d\nu_{H_{n+1}\cdot x}(y)\,d\widetilde{\lambda}(\zeta,\beta)
\end{align}
for all $F\in C_c(\R^{n-1}\times\R\times\R\times\X_{n+1})$.

Using the techniques of \cite{MarStro}, the above convergence follows once 
we prove convergence in the $\X_{n+1}$ factor alone, for all 
absolutely continuous measures $\widetilde{\lambda}$ on $\R^n$. In particular, we will 
prove the following proposition.
\begin{prop}
\label{lem: simplification via joint}
   Fix $x \in \X_{n+1}$ and a sequence $(\e,\Delta)$ satisfying $\e \Delta^{1/n} \to \sigma < \infty$. Assume that for every probability measure $\lambda_0$ on $\R^{n-1}$ absolutely continuous with respect to Lebesgue measure, and any $0<b<c<\infty$, we have
    \begin{align}
        \label{eq:lem: simplification via joint}
        \frac{1}{c-b}\int_{\R^{n-1}} \int_b^c f\left( \begin{pmatrix} 
        \id_{n-1} &0&0\\ 0&   1&0     \\ 0& -\e^{-1} \Delta^{(n-1)/n} \beta^{-1}& 1 
    \end{pmatrix}  \begin{pmatrix}
        a_n(\Delta^{(n-1)/n}) &0 \\ 0 & 1
    \end{pmatrix} \begin{pmatrix}
         u_n(-\zeta)&0  \\0 & 1
    \end{pmatrix} x
\right)
 \, d\lambda_0(\zeta) d\beta ,
    \end{align}
    converges to $\mu_{H_{n+1}\cdot x}(f)$ for every $f \in C_c(\X_{n+1})$. 
    Then \eqref{eq:lem:reduction to X} holds for $x$ and for the sequence $(\e,\Delta)$.
\end{prop}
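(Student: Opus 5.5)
We combine the factorisation \eqref{eq: lem reduction 1} with a joint-equidistribution argument in the style of \cite{MarStro}. The goal is to deduce \eqref{eq:lem: simplification via joint 2}$\to$\eqref{eq:lem: simplification via joint 3} from the hypothesis \eqref{eq:lem: simplification via joint}, and then read off \eqref{eq:lem:reduction to X}.

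\textbf{Step 1: reduce to the joint convergence.} Write $\nu_{Q,x}(f)=\int f(a(\Delta)u(-\e\xi)x)\,d\lambda(\xi)$. Change variables to $(\zeta,\beta)=(\xi'/\xi_n,\xi_n)$, which is legitimate on the full-measure set $\xi_n\neq0$. By \eqref{eq: lem reduction 1}, the integrand becomes $F(\zeta,\e\Delta^{1/n}\beta,\e\beta\Delta^{-(n-1)/n},\,n_\Delta(\beta)\,g_\Delta(\zeta)x)$. Here $n_\Delta(\beta)$ is the lower unipotent matrix with entry $-\e^{-1}\Delta^{(n-1)/n}\beta^{-1}$, $g_\Delta(\zeta)$ is the product of the $a_n$ and $u_n$ blocks, and
\[
F(\zeta,s,r,y)=f\Bigl(\begin{pmatrix} u_n(\zeta)&0\\0&1\end{pmatrix}R_s\,V_r\,y\Bigr).
\]
In this formula $R_s$ is the middle "rotation" matrix, with entries $\mp s$ and $s^{-1}$, and $V_r$ is the upper unipotent matrix with entry $r$. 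The function $F$ is continuous but not compactly supported in $(\zeta,s,r)$. Since $\widetilde\lambda$ is a probability measure, we can truncate $\zeta$ and $\beta$ to compacts (with $\beta$ bounded away from $0$) at cost $\delta$. Because $\e\beta\Delta^{-(n-1)/n}\to0$ and $\e\Delta^{1/n}\beta\to\sigma\beta$ uniformly on these compacts, uniform continuity of $f$ on the relevant region lets us replace the second and third arguments by $\sigma\beta$ and $0$.

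The limit then becomes $\int\!\int F(\zeta,\sigma\beta,0,y)\,d\mu_{H\cdot x}(y)\,d\widetilde\lambda$. To match \eqref{eq:lem:reduction to X}, use that $\begin{pmatrix}u_n(\zeta)&0\\0&1\end{pmatrix}R_{\sigma\beta}$ sends $e_{n+1}$ to $-\sigma(\xi',\xi_n,0)^\top=-\sigma(\xi,0)^\top$. Hence this matrix is an admissible choice of $M_{\sigma\xi}$, and since the right-hand side does not depend on that choice, the identification follows. When $\mu_{H\cdot x}=\mu_{\X_{n+1}}$, $G$-invariance collapses the expression to $\mu_{\X_{n+1}}(f)$.

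\textbf{Step 2: prove the joint convergence.} We must show
\[
\int \phi(\zeta,\beta)\,f_0\bigl(n_\Delta(\beta)g_\Delta(\zeta)x\bigr)\,d\widetilde\lambda\;\to\;\int\phi\,d\widetilde\lambda\;\mu_{H\cdot x}(f_0)
\]
for $\phi\in C_c(\R^{n-1}\times(\R\setminus\{0\}))$ and $f_0\in C_c(\X_{n+1})$. Products $\phi\otimes f_0$ span a dense subspace, so by Stone–Weierstrass together with tightness (both sides are probability measures, since mass is preserved by the truncation) this suffices for general $F$. We approximate $\phi\,d\widetilde\lambda$ in $L^1$ by finite sums of functions $\psi(\zeta)\chi_{[b,c]}(\beta)$ with $\psi\ge0$ in $L^1(\R^{n-1})$; negative $\beta$ is handled by symmetry or by the same hypothesis after reflection. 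Each such term, normalised, is exactly the hypothesis \eqref{eq:lem: simplification via joint} with $\lambda_0=\psi/\|\psi\|_1$. The $L^1$ error is bounded by $\|f_0\|_\infty$ times the approximation error, which is uniform in $Q$.

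\textbf{Main obstacle.} The delicate step is the uniform replacement in Step 1. The map $s\mapsto R_s$ is continuous only for $s\neq0$, so we need $\beta$ bounded away from $0$ and $\infty$. The $V_r$ factor with $r\to0$ must be controlled uniformly in $y$ for $y$ ranging over all of $\X_{n+1}$, not a compact. We handle this through right-invariant-metric continuity, $d(V_r y,y)\le d(V_r,e)$, combined with the uniform continuity of $f\in C_c$. We expect this to work, but it is where care is needed.
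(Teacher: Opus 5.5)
Apart from one point, this is the paper's own argument: the factorisation \eqref{eq: lem reduction 1}, the substitution $(\zeta,\beta)=(\xi'/\xi_n,\xi_n)$, the extension of the hypothesis from $\lambda_0\otimes\frac{1}{c-b}m_{\R}|_{[b,c]}$ to general absolutely continuous weights by linearity and $L^1$-density, and a Marklof--Str\"ombergsson joint equidistribution step. The difference is only in how the last step is packaged. The paper uses Proposition~\ref{prop: single to joint equi}, which freezes $(\zeta,\beta)$ on small cells of a compact set and lets $F_Q\to F$ uniformly on compacta absorb both $\e\Delta^{1/n}\beta\to\sigma\beta$ and the upper unipotent factor tending to $\id$. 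You instead make the replacement first and then use Stone--Weierstrass on product test functions plus no escape of mass. Your truncation away from $\beta=0$ also handles the singularity of $s\mapsto R_s$, which the paper's choice $X=\R^n$ passes over. The uniform replacement you flag is fine. For right-invariant $d_G$ one has $d(gy,g'y)\le d_G(g,g')$ for all $y\in\X_{n+1}$; apply this to the whole products $g=U_\zeta R_sV_r$ and $g'=U_\zeta R_{\sigma\beta}$, where $U_\zeta=\mathrm{diag}(u_n(\zeta),1)$, rather than to $V_r$ alone.

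The step that does not work as written is your treatment of $\beta<0$. The hypothesis concerns only intervals $0<b<c$, and only the fixed point $x$.
\begin{itemize}
\item The reflection that flips the sign of the $(n+1,n)$-entry is $\iota(g\Gamma_{n+1})=DgD^{-1}\Gamma_{n+1}$, with $D$ diagonal, entries $\pm1$, and $d_{n+1}=-d_n$. It satisfies $\iota(n_\Delta(\beta)g_\Delta(\zeta)y)=n_\Delta(-\beta)g_\Delta(\zeta^*)\iota(y)$, where $\zeta^*$ is $\zeta$ with some coordinates negated.
\item Consequently the $\beta<0$ part of the average at $x$ equals the $\beta>0$ average at the base point $\iota(x)$, tested against $f\circ\iota$.
\item In general $\iota(x)\neq x$: it sends $u(-\alpha)\Gamma_{n+1}$ to $u(-\alpha^*)\Gamma_{n+1}$ with some coordinates of $\alpha$ negated. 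The hypothesis says nothing about $\iota(x)$.
\end{itemize}
Taken literally, the hypothesis therefore only yields \eqref{eq:lem:reduction to X} for $\lambda$ concentrated on $\{\xi_n>0\}$.

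The paper's proof has the same lacuna. Its claim that the product measures with $0<b<c$ are weak-$*$ dense among all absolutely continuous probability measures on $\R^n$ fails for measures charging $\{\xi_n<0\}$. There are two ways to close it:
\begin{itemize}
\item Assume the hypothesis for all intervals $[b,c]\subset\R\smallsetminus\{0\}$. This is what the arguments of Proposition~\ref{lem: final proof} actually deliver, since they never use the sign of $\beta$.
\item Use $\iota$ at the level of Proposition~\ref{lem: reduction to X}. There every base point $u(-\alpha)\Gamma_{n+1}$ is available, and $\alpha^*\in\Q^n$ if and only if $\alpha\in\Q^n$.
\end{itemize}
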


\subsection*{Step C: Proof of the case $\sigma<\infty$}
In this step, we prove the following proposition.

 \begin{prop}
 \label{lem: final proof}
     Fix $\alpha \in \R^n$, and a sequence $(\e,\Delta)$ satisfying $\e \Delta^{1/n} \to \sigma < \infty$. Let $x= u(-\alpha) \Gamma_{n+1} $. Then for every probability measure $\lambda_0$ on $\R^n$ absolutely continuous with respect to Lebesgue measure, and any $0<b<c<\infty$, we have that the sequence of measure in~\eqref{eq:lem: simplification via joint} converges vaguely to $\mu_{H_{n+1}\cdot x}$. Moreover for $\alpha \notin \Q^n$, we have
    \begin{align}
    \label{eq:lem: final proof 2}
         \mu_{H_{n+1}\cdot x}= \mu_{\X_{n+1}}.
    \end{align}
 \end{prop}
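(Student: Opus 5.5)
The plan is to realise the measures in~\eqref{eq:lem: simplification via joint} as averages of an expanding family of unipotent translates inside $H_{n+1}$, and to identify their limit by Ratner's theorem applied on the homogeneous space $\overline{H_{n+1}\cdot x}$.

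\textbf{Step 1: Rewrite the measures as unipotent averages.} Write
\begin{equation*}
T=\Delta^{(n-1)/n}, \qquad t=\e^{-1}\Delta^{(n-1)/n}.
\end{equation*}
Since $\e\Delta^{1/n}\to\sigma<\infty$, we have $t=\Delta/(\e\Delta^{1/n})\to\infty$. Let
\begin{equation*}
v(r)=\begin{pmatrix}\id_n & 0\\ r\,{}^{\top}\!e_n & 1\end{pmatrix}, \qquad D_T=\begin{pmatrix}a_n(T)&0\\0&1\end{pmatrix}, \qquad \widehat u(\zeta)=\begin{pmatrix}u_n(\zeta)&0\\0&1\end{pmatrix}.
\end{equation*}
All of $v(r)$, $D_T$ and $\widehat u(\zeta)$ lie in $H_{n+1}$. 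Hence every measure in~\eqref{eq:lem: simplification via joint} is supported on $\overline{H_{n+1}\cdot x}=L\cdot x$, a homogeneous space carrying $\mu_{H_{n+1}\cdot x}$ (Ratner).

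Conjugating the horospherical part gives
\begin{equation*}
D_T\widehat u(-\zeta)=\widehat u(-T^{n/(n-1)}\zeta)\,D_T .
\end{equation*}
So the measure equals
\begin{equation*}
\int\!\!\int f\bigl(v(-t/\beta)\,\widehat u(-T'\zeta)\,D_Tx\bigr)\,d\lambda_0(\zeta)\,\frac{d\beta}{c-b}, \qquad T'=T^{n/(n-1)}\to\infty .
\end{equation*}
This is an average over an expanding region of the unipotent group $W=\{v(r)\widehat u(s)\}\subset H_{n+1}$. In the variable $r=t/\beta$ the parameter ranges over an interval of length $\asymp t\to\infty$, and $s$ ranges over a region of size $\asymp T'$.

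\textbf{Step 2: Reduce to box averages and apply Theorem~\ref{thm: gen Dani Margulis}.}
\begin{itemize}[leftmargin=*]
\item Approximate $\lambda_0$ and the density $t r^{-2}\,dr$ by step functions on small cells.
\item On each cell, translate the box so that it contains $0$. This absorbs the fixed translate $v(-r_0)\widehat u(-T's_0)$ into the initial point.
\item We obtain averages of the form~\eqref{lambdaiF} with $G$ the group $L$ of the orbit closure (with $\Gamma$ the corresponding lattice), multiparameter unipotent group $W$, expanding boxes, and initial measures $\mu_i$ that are pushforwards of $D_Tx$ by bounded group elements.
\end{itemize}
Tightness follows from Step~1 of the proof of Theorem~\ref{thm: gen Dani Margulis}, i.e.\ from Theorem~\ref{thm: DM93 Thm 6.1}. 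The same averaging argument gives invariance of every limit under $W$ (Step~2 there).

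\textbf{Step 3: Avoid the singular set; this is the main obstacle.} The initial points $D_Tx$ diverge, so the hypothesis $\mu(\cS(W))=0$ cannot be checked directly. Two routes are available.
\begin{itemize}[leftmargin=*]
\item First try: pass to the smoothed measures of Step~3 in the proof of Theorem~\ref{thm: gen Dani Margulis}, which are absolutely continuous, so Lemma~\ref{lem: zero measure of sub hom spaces} applies; then run Step~4 (via Theorem~\ref{thm: DM93 Thm 1}) uniformly along the sequence.
\item If that is not uniform enough, use linearisation (Dani--Margulis, Mozes--Shah) directly. A limit charging a tube $N(W,\cdot)\Gamma$ would force $D_T$-translates of a horospherical piece to stay near a fixed proper algebraic subvariety. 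Since $D_T$ expands exactly the $\widehat u$ directions and the $W$-orbit generates $L$ modulo its normaliser, this is excluded.
\end{itemize}
With $\lambda_\infty(\cS(W))=0$ and $W$-invariance in hand, Ratner's theorem forces $\lambda_\infty=\mu_{H_{n+1}\cdot x}$.

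\textbf{Step 4: The irrational case~\eqref{eq:lem: final proof 2}.} Note that $H_{n+1}$ is the stabiliser of the vector $e_{n+1}$. The orbit $H_{n+1}u(-\alpha)\Gamma_{n+1}$ is closed exactly when the $\Gamma_{n+1}$-orbit of $u(\alpha)e_{n+1}=(\alpha,1)$ is discrete, i.e.\ when $(\alpha,1)$ is a rational direction, i.e.\ when $\alpha\in\Q^n$. The connected subgroups containing $H_{n+1}$ whose orbits can carry finite invariant measure are only $H_{n+1}$ and $G_{n+1}$; the stabiliser of the line $\R e_{n+1}$ contains a non-unimodular diagonal part, so its orbits carry no finite invariant measure. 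Therefore, by Ratner's orbit-closure theorem, for $\alpha\notin\Q^n$ we have $\overline{H_{n+1}\cdot x}=\X_{n+1}$, and hence $\mu_{H_{n+1}\cdot x}=\mu_{\X_{n+1}}$.
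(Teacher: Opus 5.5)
Your Steps 2--3 do not go through, and you identify the obstacle without resolving it.

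\textbf{The gap.} Theorem~\ref{thm: gen Dani Margulis} requires initial measures $\mu_i$ converging weakly to a \emph{probability} measure $\mu$ with $\mu(\cS(U))=0$. Both its tightness step (Theorem~\ref{thm: DM93 Thm 6.1}) and its singular-set step (Theorem~\ref{thm: DM93 Thm 1}) start from a fixed compact set $F_0$ carrying most of the mass of every $\mu_i$.

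In your reduction the initial data are Dirac masses at $D_Tx$, or at its translates by $v(-r_0)\widehat{u}(-T's_0)$. Here $r_0\asymp t$ and $|T's_0|\asymp T'$, so these are not bounded group elements. Moreover, for $n\ge 2$ the points $D_Tx$ diverge, since $D_Tu(-\alpha)\Z^{n+1}$ contains $T^{-1}e_n$. Consequences:
\begin{itemize}
\item Tightness does not follow from Step~1 of that proof.
\item Your first route fails outright. Smoothing a divergent Dirac mass does not stop the mass escaping, so no compact $F_0\subset\cG(W)$ exists uniformly in $i$.
\item Your second route is only a heuristic. It would need quantitative non-divergence for the diverging base points, plus a real linearisation analysis for $W$. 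But $W$ is a proper, non-horospherical subgroup of the expanding horospherical group of $D_T$. Also $D_Tv(r)D_T^{-1}=v(Tr)$, so $D_T$ does not expand only the $\widehat{u}$ directions.
\end{itemize}
As written, the convergence statement, which is the main content, is unproved.

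\textbf{The missing idea.} Put $\widehat{u}$ into the initial measure rather than into the unipotent group. Let $\mu_i$ be the pushforward of $\lambda_0$ under $\zeta\mapsto D_T\widehat{u}(-\zeta)x$. Since $D_T\widehat{u}(-\zeta)u(-\alpha)$ lies in the copy of $\widetilde{G}_n=\SL_n(\R)\ltimes\R^n$ embedded as $\left(\begin{smallmatrix}A&b\\0&1\end{smallmatrix}\right)$, these are expanding horospheres in the closed subspace $\widetilde{\X}_n\subset\X_{n+1}$. For $\alpha\notin\Q^n$ they converge to the probability measure $\mu_{\widetilde{\X}_n}$ \cite[Thm.~1.4]{Shah96}, \cite[Thm.~5.2]{MarStro}.

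The paper then applies Theorem~\ref{thm: gen Dani Margulis} only to the one-parameter group $\{v(r)\}$. Before doing so it shows $\mu_{\widetilde{\X}_n}(\cS(\{v(r)\}))=0$, using a local product decomposition of $G_{n+1}$ as (lower unipotent)$\cdot a_t\cdot\widetilde{G}_n$. The first factor commutes with $\{v(r)\}$ and the second normalises it. The $r^{-2}$ weight and the fact that $0\notin[t/c,t/b]$ are handled by approximation.

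For $\alpha\in\Q^n$ the paper uses no Ratner or Dani--Margulis input:
\begin{itemize}
\item $H_{n+1}x$ is closed and fibres over $\X_n(N)$ with compact torus fibres.
\item The base projections equidistribute by mixing.
\item Every limit is $v$-invariant.
\item $v$ acts on almost every fibre by a uniquely ergodic translation, which forces the Haar measure.
\end{itemize}

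\textbf{Your Step~4.} This part is correct and genuinely different from the paper. You combine three facts: $H_{n+1}x$ is closed iff $\alpha\in\Q^n$; the connected intermediate subgroups are $H_{n+1}$, the non-unimodular line stabiliser, and $G_{n+1}$; and Ratner's orbit-closure theorem. Together these give \eqref{eq:lem: final proof 2} independently of the convergence statement. The paper instead reads it off from the full support of the limit $\mu_{\X_{n+1}}$ of measures supported on $\overline{H_{n+1}\cdot x}$.
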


The proof of the proposition is divided into two parts, depending on whether $\alpha \in \Q^n$ or not. The key ingredients of the proof are the use of the well-known Margulis thickening technique~\cites{KM1,Margulis}, together with a generalised version of the Dani–Margulis theorem proved in Section~\ref{sec: Gen Dan Mar}.

 \subsection*{Step D: Proof of the case $\sigma=\infty$}

The final step in the proof of Theorem~\ref{thm: main dynamical theorem} is to show that Proposition~\ref{lem: reduction to X} holds for $\sigma=\infty$. More precisely, we prove the following proposition.

\begin{prop}
\label{lem: enough for finite}
Fix  $x \in \X_{n+1}$. Suppose that the equation~\eqref{eq:lem:reduction to X} holds for every sequence $(\e,\Delta)$ with $\e \Delta^{1/n} \to \sigma < \infty$, and for every probability measure $\lambda$ absolutely continuous with respect to Lebesgue measure. Then the same conclusion holds whenever $\e \Delta^{1/n} \to \infty$.
\end{prop}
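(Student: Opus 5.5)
The idea is to reduce the case $\e\Delta^{1/n}\to\infty$ to the finite-$\sigma$ case by factoring $a(\Delta)$ into two pieces. Write $\Delta = \Delta_1\Delta_2$, where $\Delta_1\to\infty$ is chosen so that $\e\Delta_1^{1/n}\to\sigma$ for a fixed finite $\sigma$; concretely, $\Delta_1 = (\sigma/\e)^n$, which is admissible because $\e\Delta^{1/n}\to\infty$ gives $\Delta_2 = \Delta/\Delta_1 \to\infty$. Since $a(\Delta)=a(\Delta_2)a(\Delta_1)$,
\begin{equation*}
\nu_{Q,x}(f) = \int_{\R^n} (f\circ a(\Delta_2))\big(a(\Delta_1)u(-\e\xi)x\big)\, d\lambda(\xi).
\end{equation*}
The inner measure $\nu'_{Q}:=\int \delta_{a(\Delta_1)u(-\e\xi)x}\,d\lambda(\xi)$ is exactly the measure in \eqref{eq:lem:reduction to X} for the sequence $(\e,\Delta_1)$ with $\e\Delta_1^{1/n}\to\sigma$. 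By hypothesis, it therefore converges to $\mu_{\X_{n+1}}$ when $\alpha\notin\Q^n$, and to $\mu^{\sigma}:=\int\!\int \delta_{M_{\sigma\xi}y}\,d\mu_{H_{n+1}\cdot x}(y)\,d\lambda(\xi)$ when $\alpha\in\Q^n$. (Note that the hypothesis of \eqref{eq:lem:reduction to X} is only about the sequence $(\e,\Delta)$ and not about $Q$, so relabelling is harmless.)

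If $\alpha\notin\Q^n$, I would try to avoid the pushforward by $a(\Delta_2)$ being applied to a merely weakly convergent sequence. One option is to replace the single split by a diagonal choice. A cleaner option is to use that $\nu_{Q,x}$ itself is smooth along $U$: translating $\xi$ by $\eta$ changes the point by $u(-\e\Delta^{1/n}\eta)$ after $a(\Delta)$. Because $\lambda$ is absolutely continuous, $\nu_{Q,x}$ is asymptotically invariant under $u(s)$ for every fixed $s$, and the same holds at scale $\e\Delta_1^{1/n}$, since translating $\xi$ by $\eta/\e$ costs $o(1)$ in total variation when $\e\to 0$; if $\e$ stays bounded away from $0$, then $\Delta_1$ stays bounded, and I would instead pick $\Delta_1\to\infty$ with $\e\Delta_1^{1/n}\to\infty$ slowly. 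So in general I would argue via Theorem~\ref{thm: gen Dani Margulis}, in the following way.

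Write $\xi=\xi_0+\e^{-1}\Delta_1^{-1/n}s$ with $s$ ranging over a large cube $I$. Then $a(\Delta)u(-\e\xi)x = a(\Delta_2)\,u(-s)\,a(\Delta_1)u(-\e\xi_0)x$. Next, conjugate $a(\Delta_2)u(-s)=u(-\Delta_2^{(n+1)/n}s)a(\Delta_2)$. This expresses $\nu_{Q,x}$, up to $o(1)$ coming from the absolute continuity of $\lambda$ (approximating the density by step functions on cubes of side $\e^{-1}\Delta_1^{-1/n}|I|$), as an average over an expanding cube $\Delta_2^{(n+1)/n}I$ of unipotent translates $u_t$ applied to the measures $\mu_i:=(a(\Delta_2))_*\nu'_Q$-type initial distributions. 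The difficulty is that these initial measures need not converge. So I would instead reverse the order: apply Theorem~\ref{thm: gen Dani Margulis} with initial measures $\mu_i=\nu'_Q$, which do converge to the limit $\mu_\infty$ ($=\mu_{\X_{n+1}}$ or $\mu^\sigma$), and with the unipotent group $U$ averaged over cubes $I_i$ of side $\e\Delta_1^{1/n}\cdot(\text{scale})\to\infty$. This shows that $U$-averages of $\nu'_Q$ equidistribute to $\mu_{\X_{n+1}}$ provided $\mu_\infty(\cS(U))=0$. Finally, $a(\Delta_2)$ is handled by the standard Margulis thickening argument: expanding a $U$-averaged, absolutely continuous-in-$U$ measure by $a(\Delta_2)$ with $\Delta_2\to\infty$ gives equidistribution by mixing of $a(t)$, uniformly for initial measures lying in a tight family, since $U$ is the expanding horospherical subgroup of $a(t)$.

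The main obstacle is this last uniformity, together with checking that $\mu^\sigma(\cS(U))=0$ in the rational case. For the latter, $\mu^\sigma$ is supported on $M_{\sigma\xi}\overline{H_{n+1}x}$, which is a closed orbit of a conjugate of $H_{n+1}$. A proper subgroup containing $U$ with a finite orbit through $M_{\sigma\xi}y$ would have to be a conjugate; since $U\not\subset M_{\sigma\xi}H_{n+1}M_{\sigma\xi}^{-1}$ fails only when the stabilised vector is $-\sigma(\xi,0)$, I expect to show that for $\lambda$-a.e.\ $\xi$ and $\mu_{H_{n+1}x}$-a.e.\ $y$ the point is $U$-generic, using that $\xi$ varies continuously, so that only countably many closed intermediate orbits occur. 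If this genericity step proves delicate, the fallback is to bypass Theorem~\ref{thm: gen Dani Margulis} and directly use mixing of $a(\Delta_2)$ against the limit $\mu^\sigma$, which is $U$-invariant and whose $U$-conditional measures are Lebesgue. By Mozes--Shah/Kleinbock--Margulis, $a(t)_*$ of any such measure tends to Haar, and tightness of $\nu'_Q$ (Theorem~\ref{thm: DM93 Thm 6.1}) controls the error from replacing $\nu'_Q$ by its limit.
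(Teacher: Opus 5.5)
\textbf{The rational case has a genuine gap.} Your main line breaks down at exactly the step you flagged. The claim $\mu^{\sigma}(\cS(U))=0$ is false. By \eqref{Mxi}, $M_{\sigma\xi}H_{n+1}M_{\sigma\xi}^{-1}$ is the stabiliser of $-\sigma(\xi,0)$. Since $U$ fixes every vector of the form $(\xi,0)$, we get $U\subset M_{\sigma\xi}H_{n+1}M_{\sigma\xi}^{-1}$ for \emph{every} $\xi$; this is precisely how the paper shows that $\mu^{(\sigma)}_{\lambda,x}$ is $U$-invariant. For $\alpha\in\Q^n$ the orbit $H_{n+1}x$ is closed of finite volume. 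Hence every point $M_{\sigma\xi}y$ in the support of $\mu^\sigma$ lies on a closed finite-volume orbit of a proper subgroup containing $U$, and so $\mu^\sigma(\cS(U))=1$. Equivalently: $\mu^\sigma$ is $U$-invariant and differs from $\mu_{\X_{n+1}}$, so Theorem~\ref{thm: gen Dani Margulis} applied to the constant sequence $\mu_i=\mu^\sigma$ would give a contradiction. (In the irrational case your use of that theorem is legitimate, since $\mu_{\X_{n+1}}(\cS(U))=0$, but it is also unnecessary; see below.)

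\textbf{The fallback does not close the gap.} Tightness of $\nu'_Q$ does not let you replace $\nu'_Q$ by its limit inside $a(\Delta_2)_*$. For $\Delta_2\to\infty$ the family $f\circ a(\Delta_2)$ is not equicontinuous, so $\nu'_Q\to\mu^\sigma$ says nothing about $a(\Delta_2)_*\nu'_Q-a(\Delta_2)_*\mu^\sigma$. This is exactly the obstacle you identified at the outset. Such an interchange is only legitimate if $\Delta_2$ grows slowly compared with the unknown rate of $\nu'_Q\to\mu^\sigma$. The paper's diagonal Proposition~\ref{prop: double} provides exactly that. The paper then reaches arbitrary growth of $\e\Delta^{1/n}$ via monotonicity in $\e$ (Corollary~\ref{cor: Gen Dan Mar}). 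There Theorem~\ref{thm: gen Dani Margulis} is applied to Dirac masses at points converging to a $U$-generic $x_0$, which can be found because the slower sequence already equidistributes. Your proposal has no substitute for that monotonicity step. Your treatment of $\e\not\to 0$ is also circular: choosing $\e\Delta_1^{1/n}\to\infty$ puts $\nu'_Q$ in the very regime you are trying to prove.

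\textbf{A possible repair.} The argument can be fixed with the one ingredient of yours that is correct: uniform equidistribution of expanding $U$-pieces over tight families. Apply it to $\nu'_Q$ itself, with no appeal to its limit or to Theorem~\ref{thm: gen Dani Margulis}. The scalings also need correcting. Since $a(t)u(s)a(t)^{-1}=u(t^{1+1/n}s)$, the smoothing scale in $\xi$ must be $\e^{-1}\Delta_1^{-1-1/n}$, not $\e^{-1}\Delta_1^{-1/n}$. The latter equals $1/\sigma$, which does not shrink, so your step-function approximation of the density fails. For $\e\to 0$, put $\Delta_1=\e^{-n}\to\infty$ and $\Delta_2=(\e\Delta^{1/n})^n\to\infty$. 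Replace $\lambda$ by $\lambda*m$, with $m$ uniform on $[0,\Delta_1^{-1}]^n$; this costs $o(1)$ in total variation, by absolute continuity and $L^1$-continuity of translation. Then
\begin{equation*}
\nu_{Q,x}(f)=\int F_Q\,d\nu'_Q+o(1),\qquad F_Q(y)=\int_{[0,1]^n}f\big(a(\Delta_2)u(-s)y\big)\,ds .
\end{equation*}
Here $\nu'_Q$ is the measure for $(\e,\Delta_1)$ with $\e\Delta_1^{1/n}=1$. It converges by hypothesis and is therefore tight. Meanwhile $F_Q\to\mu_{\X_{n+1}}(f)$ uniformly on compacta, because $U$ is the full expanding horospherical subgroup of $a(t)$ (Kleinbock--Margulis). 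For fixed $\e$ the statement is just the classical equidistribution of expanding horospheres with an absolutely continuous density.

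This route would be shorter than the paper's, which combines a diagonal argument with monotonicity via Theorem~\ref{thm: gen Dani Margulis}. It is not what you wrote, however: as it stands, your rational case rests on a false genericity claim and an unjustified interchange of the limit with $a(\Delta_2)_*$.
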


The proof of the proposition is divided into two parts. 
In the first part, we apply the generalised Dani--Margulis theorem (see Theorem~\ref{thm: gen Dani Margulis}). This result shows that if equation~\eqref{eq:lem:reduction to X} holds for a sequence $(\varepsilon,\Delta)$ with $\sigma=\infty$, then it also holds for any sequence $(\varepsilon',\Delta)$ with $\varepsilon' \geq \varepsilon$.
In the second part, we use the fact that for every $f \in C_c(\X_{n+1})$,
\begin{align}
\lim_{\sigma \to \infty} 
\int_{G_{n+1}} 
\int_{\X_{n+1}} 
f(M_{\sigma \xi} \, y)
\, d\mu_{H_{n+1}\cdot x}(y)\,
d\hat{\lambda}(g) 
&=\mu_{\X_{n+1}}(f), \label{eq:lem: enough for finite}
\end{align}
which will follow from Margulis's thickening technique~\cites{KM1,Margulis}.

Using \eqref{eq:lem: enough for finite}, we show that equation~\eqref{eq:lem:reduction to X} holds for any sequence $(\varepsilon,\Delta)$ for which $\varepsilon \Delta^{1/n} \to \infty$ sufficiently slowly. In particular, we construct a function $\varepsilon'(\cdot)$ such that for any sequence $(\varepsilon,\Delta)$ satisfying 
\begin{equation}
\varepsilon < \varepsilon'(\Delta)
\quad\text{and}\quad
\varepsilon \Delta^{1/n} \to \infty,
\end{equation}
equation~\eqref{eq:lem:reduction to X} holds.

Combining these two facts, we conclude that equation~\eqref{eq:lem:reduction to X} holds for any sequence $(\varepsilon,\Delta)$ with $\sigma=\infty$.

\medskip

Theorem~\ref{thm: main dynamical theorem} then follows directly from Propositions~\ref{lem: reduction to X},~\ref{lem: simplification via joint},~\ref{lem: final proof} and~\ref{lem: enough for finite}.

\medskip
\section{Reduction to $\SL_{n+1}(\R)/\SL_{n+1}(\Z)$}
\label{sec: Reduction to X}
The main aim of this section is to prove Proposition~\ref{lem:  reduction to X}. Throughout this section, we assume the hypotheses of Proposition~\ref{lem:  reduction to X}. To proceed, we require the following auxiliary lemmas.

\begin{lem}
\label{lem: reduction 1 0}
Suppose $(F_k)_{k \in \N}$ is a uniformly bounded sequence of measurable functions $\R^n \to \R$, such that the limit
\begin{align}
\label{eq: lem: reduction 1 0: 1}
    \lim_{k \to \infty} \int_{\R^n} F_k(\xi) \, d\lambda(\xi)
\end{align}
exists. Then, for any sequence $(s_k)_{k \in \N} $ in $\R^n$ converging to $0$, we have
\begin{align}
\label{eq: lem: reduction 1 0: 2}
    \lim_{k \to \infty} \int_{\R^n} F_k(\xi + s_k) \, d\lambda(\xi)
    = \lim_{k \to \infty} \int_{\R^n} F_k(\xi) \, d\lambda(\xi).
\end{align}
\end{lem}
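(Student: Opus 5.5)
The plan is to use that $\lambda$ is absolutely continuous, so translation is continuous in $L^1$. Write $d\lambda(\xi)=h(\xi)\,d\xi$ with $h\in L^1(\R^n)$, $h\geq 0$. Let $M=\sup_k\|F_k\|_\infty<\infty$. A change of variables $\xi\mapsto\xi-s_k$ gives
\begin{equation}
\int_{\R^n} F_k(\xi+s_k)\,d\lambda(\xi)=\int_{\R^n} F_k(\xi)\,h(\xi-s_k)\,d\xi .
\end{equation}

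Subtracting $\int F_k\,d\lambda=\int F_k(\xi)h(\xi)\,d\xi$, we obtain
\begin{equation}
\left|\int_{\R^n} F_k(\xi+s_k)\,d\lambda(\xi)-\int_{\R^n} F_k(\xi)\,d\lambda(\xi)\right|\leq M\,\|h(\cdot-s_k)-h\|_{L^1(\R^n)} .
\end{equation}

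It remains to show that the right-hand side tends to zero as $s_k\to 0$. This is the standard continuity of translation in $L^1$. To prove it, approximate $h$ in $L^1$ by a compactly supported continuous function $g$ with $\|h-g\|_{L^1}<\delta$. Then
\begin{equation}
\|h(\cdot-s)-h\|_{L^1}\leq 2\delta+\|g(\cdot-s)-g\|_{L^1},
\end{equation}
and the last term tends to zero by uniform continuity of $g$ together with the uniform bound on its support for $|s|\leq 1$.

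Consequently the difference of the two sequences tends to zero. Since the limit \eqref{eq: lem: reduction 1 0: 1} exists by hypothesis, the limit on the left of \eqref{eq: lem: reduction 1 0: 2} also exists and has the same value. There is no real obstacle here. The only point needing care is that $F_k$ is merely measurable, so no continuity of $F_k$ may be used; all the regularity has to come from the density $h$, which is exactly where absolute continuity of $\lambda$ enters.
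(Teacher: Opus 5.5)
Your proof is correct and is essentially the paper's argument. The paper first treats $\lambda$ equal to normalised Lebesgue measure on a cuboid $O$, bounding the difference by $M\, m_{\R^n}((O+s_k)\triangle O)/m_{\R^n}(O)$; this is exactly your quantity $M\|h(\cdot-s_k)-h\|_{L^1}$ for $h=\chi_O/m_{\R^n}(O)$. It then approximates a general absolutely continuous $\lambda$ by combinations of cuboid indicators, whereas you apply $L^1$-continuity of translation directly to the density, using $C_c$ approximants instead of step functions.
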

\begin{proof}
We first prove the lemma when $\lambda$ equals the normalised Lebesgue measure restricted to some cuboid $O$ of the form $[x_1, x_1'] \times \cdots \times [x_n, x_n']$, that is, when
\begin{equation}
\lambda= \frac{1}{m_{\R^n}(O)} m_{\R^n}|_{O}.
\end{equation}
In this case, 
\begin{align}
& \left| \int_{\R^n} F_k(\xi + s_k) \, d\lambda(\xi)
      - \int_{\R^n} F_k(\xi) \, d\lambda(\xi) \right| \notag \\ &= \frac{1}{m_{\R^n}(O)} 
   \left| \int_O F_k(\xi + s_k) \, d\xi - \int_O F_k(\xi) \, d\xi \right| \nonumber \\
&\le \frac{M}{m_{\R^n}(O)} 
   \big( m_{\R^n}((O+s_k)\setminus O)
       + m_{\R^n}(O\setminus(O+s_k)) \big), \label{eq: lem: aax 1}
\end{align} 
where $M$ denotes a fixed uniform bound on the functions $F_k$. 
Note that since $s_k \to 0$, the Lebesgue measure of the symmetric difference
\begin{equation}
\big( (O+s_k)\setminus O \big) \  \cup  \  \big(O\setminus(O+s_k) \big)
\end{equation}
converges to $0$ as $k\to\infty$. This, combined with \eqref{eq: lem: aax 1}, proves \eqref{eq: lem: reduction 1 0: 2} in the special case, when $\lambda$ equals the normalised Lebesgue measure restricted on some cuboid $O$. The general case now follows by approximating $\lambda$ by finite linear combinations of indicator functions of cuboids, along with uniform boundedness of the $F_k$. Hence the lemma follows.
\end{proof}

\begin{lem}
\label{lem:reduction 1 1}
Fix $x \in \tXn$. Then any subsequential limit of the sequence $\tnu_{Q,x}$ is invariant under the right action of the unipotent group $\widetilde{U}_{n+1}$ defined by
\begin{align}
\label{eq:def U n}
\widetilde{U}_{n+1} = \left\{ \left[\id_{n+1}, 
\begin{pmatrix}
y \\ 0
\end{pmatrix}
\right] : y \in \R^{n} \right\}.
\end{align}
\end{lem}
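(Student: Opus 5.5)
The plan is to realise the left translation by $[\id_{n+1},(y,0)]$ as a tiny shift of the integration variable $\xi$, up to a group element tending to the identity. The shift is then removed with Lemma~\ref{lem: reduction 1 0}. Throughout, fix $y\in\R^n$ and $f\in C_c(\tXn)$, and write
\begin{equation}
g_Q(\xi)=\left[ a(\Delta)u(-\e\xi), \begin{pmatrix} \zer \\ -Q\Delta^{-1}\end{pmatrix}\right],
\qquad F_Q(\xi)=f\big(g_Q(\xi)x\big),
\end{equation}
so that $\tnu_{Q,x}(f)=\int F_Q\,d\lambda$. Pass to a subsequence along which $\tnu_{Q,x}\to\nu_\infty$ vaguely. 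The goal is to show $\nu_\infty(f\circ[\id_{n+1},(y,0)])=\nu_\infty(f)$.

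\textbf{Step 1 (shift identity).} I will compute the effect of replacing $\xi$ by $\xi+s$. For this, it is cleanest to think of $g_Q(\xi)x$ as the affine lattice of points
\begin{equation}
(X,Y)=\big(\Delta^{1/n}(p-(\e\xi+\alpha)q),\ (q-Q)/\Delta\big),\qquad (p,q)\in\Z^{n+1},
\end{equation}
so that $q=Q+\Delta Y$. Replacing $\xi$ by $\xi+s$ sends $X$ to $X-\e\Delta^{1/n}(Q+\Delta Y)s$. Now choose
\begin{equation}
s=s_Q=-\frac{y}{\e\Delta^{1/n}Q}.
\end{equation}
With this choice, the new point is $(X+y+(\Delta/Q)Y\,y,\ Y)$. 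At the group level this reads
\begin{equation}
g_Q(\xi+s_Q)\,x=h_Q\,[\id_{n+1},(y,0)]\,g_Q(\xi)\,x,
\end{equation}
where $h_Q=\left[\begin{pmatrix}\id_n & (\Delta/Q)y\\ 0&1\end{pmatrix},0\right]$ is a linear shear; the shear commutes with the translation $[\id_{n+1},(y,0)]$. I will verify this identity directly via the multiplication law $[A_1,b_1][A_2,b_2]=[A_1A_2,b_1+A_1b_2]$ rather than through coordinates.

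\textbf{Step 2 (limits).} Two facts follow from the scaling assumptions. First, $s_Q\to0$, because $\e\Delta^{1/n}\to\sigma>0$ and $Q\to\infty$. Second, $h_Q\to e$, because $\Delta/Q\to0$. Since $f$ is compactly supported, it is uniformly continuous with respect to left multiplication by elements near the identity. Hence
\begin{equation}
\sup_{z\in\tXn}\big|f(h_Q z)-f(z)\big|\to 0,
\end{equation}
and therefore
\begin{equation}
\int f\big([\id_{n+1},(y,0)]\,g_Q(\xi)x\big)\,d\lambda(\xi)=\int F_Q(\xi+s_Q)\,d\lambda(\xi)+o(1).
\end{equation}
The functions $F_Q$ are uniformly bounded by $\|f\|_\infty$, and $\int F_Q\,d\lambda\to\nu_\infty(f)$ along the subsequence. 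Lemma~\ref{lem: reduction 1 0} therefore gives $\int F_Q(\xi+s_Q)\,d\lambda\to\nu_\infty(f)$. The left-hand side above tends to $\nu_\infty(f\circ[\id_{n+1},(y,0)])$, since $f\circ[\id_{n+1},(y,0)]$ is again in $C_c$. Combining the two limits gives the claimed invariance.

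The main obstacle is Step 1: one must find the right $s_Q$ so that the $q$-dependence of the shift is absorbed into a shear tending to the identity. The naive commutation $[\id_{n+1},(y,0)]\,a(\Delta)u(-\e\xi)=a(\Delta)u(-\e\xi)\,[\id_{n+1},(\Delta^{-1/n}y,0)]$ does not help. It produces a perturbation on the right, next to $x$, and such perturbations are not controlled after the expanding map $a(\Delta)$ is applied.
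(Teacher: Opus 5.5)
Your proposal is correct and is essentially the paper's argument. Both shift $\xi$ by $s_Q$ of size $(\e\Delta^{1/n}Q)^{-1}$, realise the shift as left multiplication by $[u(y\Delta/Q),(y,0)]$ (your $h_Q[\id_{n+1},(y,0)]$), which tends to $[\id_{n+1},(y,0)]$, and conclude via Lemma~\ref{lem: reduction 1 0} and uniform continuity of $f$; your $s_Q=-y/(\e\Delta^{1/n}Q)$ and its signs are in fact stated more carefully than the paper's displayed $s_Q$, which omits the factor $\e^{-1}$.
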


\begin{proof}
Let $\nu_{\infty,x}$ be a subsequential limit of $\nu_{Q,x}$ along a sequence $(Q_k)_{k \in \N}$. Replacing the sequence $Q$ by the subsequence $Q_k$, we may assume that the limit of $\nu_{Q,x}$ exists as $Q \to \infty$ and equals $\nu_{\infty,x}$.

To prove the lemma, fix a continuous, compactly supported function $f$ on $\widetilde{\X}_{n+1}$ and $\zeta \in \R^n$. Apply Lemma~\ref{lem: reduction 1 0} to the sequence of functions $F_Q$ defined by
\begin{equation}
F_Q(\xi) = f \left(\left[ a(\Delta) u(-\e \xi), 
\begin{pmatrix}
\zer \\ -Q\Delta^{-1}
\end{pmatrix} \right] \right),
\end{equation}
with $s_Q = \zeta (\Delta^{1/n}Q)^{-1}$. This yields
\begin{align}
\label{eq:aax 5}
\nu_{\infty,x}(f) 
= \lim_{Q \rightarrow \infty} 
\int_{\R^n} 
f\left(  
\left[ a(\Delta)  u\left( -\e \left(\xi+ \frac{\zeta}{ \Delta^{1/n} \e Q} \right) \right) , 
\begin{pmatrix}
\zer \\ -Q \Delta^{-1}
\end{pmatrix} \right] x  
\right) 
\, d\lambda(\xi).
\end{align}

A direct computation shows that
\begin{align}
\label{eq:aax 7}
\left[ a(\Delta)  u\left( -\e \left(\xi+ \frac{\zeta}{ \Delta^{1/n} \e Q} \right) \right) , 
\begin{pmatrix}
\zer \\ -Q \Delta^{-1}
\end{pmatrix} \right] 
= g_Q  
\left[ a(\Delta)  u\left( -\e \xi \right) , 
\begin{pmatrix}
\zer \\ -Q \Delta^{-1}
\end{pmatrix} \right],
\end{align}
where
\begin{align}
\label{eq:aax 8}
g_Q:= \left[ u\left( \frac{\zeta\Delta}{Q} \right), 
\begin{pmatrix}
\zeta \\ 0
\end{pmatrix}\right] 
\longrightarrow 
g_\infty := \left[ \id_{n+1}, 
\begin{pmatrix}
\zeta \\ 0
\end{pmatrix}\right].
\end{align}
Combining \eqref{eq:aax 7} with \eqref{eq:aax 5}, we obtain
\begin{align}
\label{eq:aax 10}
\nu_{\infty,x}(f) 
= \lim_{Q \rightarrow \infty} 
\nu_{Q,x} \left( f \circ g_Q \right).
\end{align}
Using the uniform continuity of $f$, we conclude that
\begin{align}
\label{eq:aax 3}
\lim_{Q \rightarrow \infty} 
\nu_{Q,x} \left( f \circ g_Q \right)
= \nu_{\infty,x}(f \circ g_\infty).
\end{align}
Combining \eqref{eq:aax 10}, \eqref{eq:aax 3}, and \eqref{eq:aax 8}, we deduce that
\begin{align}
\label{eq:aax 11}
\nu_{\infty,x}(f)
= \nu_{\infty,x}\left(f \circ 
\left[ \id_{n+1}, 
\begin{pmatrix}
\zeta \\ 0
\end{pmatrix}\right] 
\right).
\end{align}
Since $f$ and $\zeta \in \R^n$ are arbitrary, the lemma follows from \eqref{eq:aax 11}.
\end{proof}

\medskip

We now prove Proposition~\ref{lem: reduction to X}.
\begin{proof}[Proof of Proposition~\ref{lem: reduction to X}]
Fix $\widetilde{x} \in \tXn$ and let $x= \pi(\widetilde{x})$, where 
\begin{equation}
\pi: \tXn \rightarrow \X_{n+1}, \qquad [A,b]\widetilde{\Gamma}_{n+1} \mapsto A \Gamma_{n+1}, \quad A \in \SL_{n+1}(\R), \ b \in \R^{n+1},
\end{equation}
denotes the natural projection map. Let $\nu$ denote the measure given by the right-hand side of \eqref{eq:lem:reduction to X}.

By the Banach--Alaoglu theorem, the sequence $\tnu_{Q,\widetilde{x}}$ is relatively compact in the vague topology. Hence, for every sequence $Q_j \to \infty$, there exists a subsequence along which $\tnu_{Q_j,\widetilde{x}}$ converges vaguely to a measure $\tnu$, which a priori need not be a probability measure. To prove convergence, it therefore suffices to show that every such limit measure coincides with
\begin{align}
\label{eq:abcde 1}
\int_{\X_{n+1}}\int_{[0,1]^{n+1}} 
\delta_{[g,g\cdot w]\widetilde{\Gamma}_{n+1}}\,dw\, d\nu(g\Gamma_{n+1}).
\end{align}
Accordingly, let $\tnu$ be a subsequential limit of $\tnu_{Q,\widetilde{x}}$ as $Q\to\infty$.

Using \eqref{eq:lem:reduction to X}, we have
\begin{equation}
\pi_*(\tnu)=\lim_{Q\to\infty}\pi_*(\tnu_{Q,\widetilde{x}})
=\lim_{Q\to\infty}\nu_{Q,x}=\nu,
\end{equation}
and therefore on disintegrating $\tnu$ along $\pi$, we get:
\begin{align}
\label{eq: a 2}
\tnu = \int_{\X_{n+1}} \tnu^{(y)}\, d\nu(y),
\end{align}
where for $\nu$-almost every $y\in\X_{n+1}$,
\begin{equation}
\supp(\tnu^{(y)})\subset \pi^{-1}(y).
\end{equation}

By Lemma~\ref{lem:reduction 1 1}, the measure $\tnu$ is invariant under the right action of $\widetilde{U}_{n+1}$ defined in \eqref{eq:def U n}.  
Hence for $\nu$-almost every $y$, the conditional measure $\tnu^{(y)}$ is also $\widetilde{U}_{n+1}$-invariant.

Note that for each $y\in\X_{n+1}$, the fiber $\pi^{-1}(y)$ is naturally identified with $\T^{n+1}$, and $\widetilde{U}_{n+1}$ acts by translations on this torus. While the action may not be uniquely ergodic for a fixed $y$, perturbing $y$ by $H_{n+1}$ shows that for $m_{H_{n+1}}$-almost every $h\in H_{n+1}$, the action of $\widetilde{U}_{n+1}$ on $\pi^{-1}(hy)$ is uniquely ergodic.  
Since $\nu$ is $H_{n+1}$-invariant by its definition (equation \eqref{eq:lem:reduction to X}), it follows that for $\nu$-almost every $y$, the $\widetilde{U}_{n+1}$-action on $\pi^{-1}(y)$ is uniquely ergodic. Therefore, for $\nu$-almost every $y$, the unique $\widetilde{U}_{n+1}$-invariant probability measure on the fiber is the Lebesgue measure
\begin{align}
\label{eq: axa 1}
\int_{[0,1]^{n+1}} f([g,g\cdot w]\widetilde{\Gamma}_{n+1})\,dw,
\end{align}
where $y=g\Gamma_{n+1}$.

Substituting this description of $\tnu^{(y)}$ into \eqref{eq: a 2} gives \eqref{eq:abcde 1}, completing the proof.
\end{proof}

\medskip

\section{Joint equidistribution}
\label{sec:Simplification via joint}
The main aim of this section is to prove Lemma~\ref{lem: simplification via joint}. We will need the following results.

\begin{prop}
\label{prop: single to joint equi}
    Suppose $X$ and $Y$ are locally compact second countable metric spaces equipped with probability measures $\nu_X$ and $\nu_Y$ respectively. Let $\phi_l: X \rightarrow Y$ denote a family of measurable functions. Assume that for any probability measure $\widetilde{\nu}$ on $X$ which is absolutely continuous with respect to $\nu_X$, and any $f\in C_c(Y)$ we have 
    \begin{align}
    \label{eq: lem: single to joint equi 1}
        \lim_{l \rightarrow \infty} \int_X f(\phi_l(x)) \, d\widetilde{\nu}(x)= \int_Y f(y)\, d\nu_Y(y).
    \end{align}
    Then the following holds.  Let $F: X \times Y \rightarrow \R$ be bounded continuous, and $F_l: X \times Y \rightarrow \R $ a family of uniformly bounded, continuous functions indexed by $l >0$ such that $F_l  \rightarrow F$ as $l \rightarrow \infty$, uniformly on compacta. Then
    \begin{align}
    \label{eq: lem: single to joint equi 2}
        \lim_{l \rightarrow \infty} \int_X F_l(x,\phi_l(x)) \, d\nu_X(x)= \int_X \int_Y F(x,y)\, d\nu_X(x) d\nu_Y(y).
    \end{align}
\end{prop}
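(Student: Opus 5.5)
First I would reduce to the case $F_l=F$. Fix $\delta>0$. Since $\nu_X$ is a finite Radon measure, choose a compact $K_X\subset X$ with $\nu_X(X\setminus K_X)<\delta$. Next apply the hypothesis \eqref{eq: lem: single to joint equi 1} with $\widetilde\nu=\nu_X$ to obtain tightness of the push-forwards $(\phi_l)_*\nu_X$. Pick a compact $K_Y\subset Y$ with $\nu_Y(Y\setminus K_Y)<\delta$, and a function $g\in C_c(Y)$ with $0\le g\le 1$ and $g=1$ on $K_Y$. Then $\int g(\phi_l)\,d\nu_X\to\nu_Y(g)\ge 1-\delta$. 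Hence for large $l$ the set of $x$ with $\phi_l(x)\notin \supp g$ has $\nu_X$-measure at most $2\delta$. Uniform convergence $F_l\to F$ on the compact set $K_X\times\supp g$, together with the uniform bound on $F_l$, then gives
\begin{equation*}
\limsup_l\Big|\int_X F_l(x,\phi_l(x))\,d\nu_X-\int_X F(x,\phi_l(x))\,d\nu_X\Big|\le C\delta .
\end{equation*}
The same truncation shows that it suffices to treat $F\in C_c(X\times Y)$, up to an error of size $C\delta$. To see this, multiply $F$ by $\chi(x)g(y)$, where $\chi\in C_c(X)$ satisfies $0\le\chi\le1$ and $\chi=1$ on $K_X$.

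Second, I would approximate $F\in C_c(X\times Y)$ uniformly by finite sums $\sum_{m} a_m(x)b_m(y)$ with $a_m\in C_c(X)$ and $b_m\in C_c(Y)$. This is Stone--Weierstrass on a compact product neighbourhood, or equivalently a partition of unity on $X$ subordinate to a fine cover, using uniform continuity of $F$. The uniform error then contributes at most $\|F-\sum a_mb_m\|_\infty$ to both sides. For a single product term, split $a_m=a_m^+-a_m^-$. If $\int a_m^\pm\,d\nu_X=0$, that part contributes zero on both sides, because $b_m$ is bounded. Otherwise $\widetilde\nu=a_m^\pm\,d\nu_X/\int a_m^\pm\,d\nu_X$ is a probability measure absolutely continuous with respect to $\nu_X$. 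The hypothesis then gives
\begin{equation*}
\int_X a_m^\pm(x)\,b_m(\phi_l(x))\,d\nu_X(x)\to \int_X a_m^\pm\,d\nu_X\int_Y b_m\,d\nu_Y .
\end{equation*}
Summing the finitely many terms, and then letting the approximation error and $\delta$ tend to zero, yields \eqref{eq: lem: single to joint equi 2}.

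I expect the main obstacle to be the non-compactness bookkeeping in the first step. The functions are only bounded continuous, and the convergence $F_l\to F$ is only uniform on compacta. The key point is that the hypothesis applied to $\widetilde\nu=\nu_X$ already rules out escape of mass for $\phi_l(x)$. This is exactly why $\nu_Y$ must be a probability measure, which it is by assumption.
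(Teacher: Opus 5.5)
Your proof is correct and follows essentially the same route as the paper: the hypothesis with $\widetilde\nu=\nu_X$ gives tightness of $(\phi_l)_*\nu_X$, which drives the truncation, and discretising the $x$-dependence lets you apply the hypothesis to normalised densities. The differences are cosmetic. You use a continuous product approximation $\sum_m a_m(x)b_m(y)$ where the paper uses a measurable small-diameter partition $K_1,\ldots,K_p$ with the sandwich $F(x_i,y)\pm\delta$. You also replace $F_l$ by $F$ before truncating, which handles the $l$-dependence in the non-compactly-supported case more transparently than the paper's extension step, since that step only treats $F$ explicitly.
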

\begin{proof}
    The proof of the proposition is essentially the same as the proof of \cite[Thm.~5.3]{MarStro}. 
    
    Let $d_X(\cdot, \cdot)$ denote the metric on $X$. Let us first assume that the support of the functions $(F_l)_l$ and $F$ are contained in a fixed compact subset $K_{X \times Y}$ of $X \times Y$. In this case, the convergence $F_l \rightarrow F$ is uniform, and also the functions $(F_l)_l$ and $F$ are uniformly continuous. Let $\delta>0$ be given. Using uniform continuity and uniform convergence, there exist $\e>0$, $l_0>0$ such that 
    \begin{align}
        F(x_0,y)-\delta \leq F(x,y) \leq F(x_0,y) + \delta, \label{eq: xax 1} \\
        F(x_0,y)-\delta \leq F_l(x,y) \leq F(x_0,y) + \delta \label{eq: xax 2}
    \end{align}
    for all $l > l_0$, $(x_0,y) \in X \times Y$ and $x$ satisfying $d_X(x,x_0) < \e$. 
    
    Let $K$ be a compact subset of $X$ such that
    \begin{equation}
    K_{X \times Y} \subset K \times Y.
    \end{equation}
    Using compactness of $K$, fix a partition $K_1, \ldots, K_p$ of $K$, each of positive $\nu_X$-measure, with diameter less than $\epsilon$. Also, fix $x_i \in K_i$ for each $i$. Then for $l \geq l_0$, using \eqref{eq: xax 2}, we have 
    \begin{align}
        \int_X F_l(x,\phi_l(x)) \, d\nu_X(x) &= \sum_{i=1}^p \int_{X} F_l(x,\phi_l(x)) \, \chi_{K_i}(x) d\nu_X(x) \nonumber \\
        &\leq \sum_{i=1}^p \int_{X} \left( F(x_i,\phi_l(x))+ \delta \right) \, \chi_{K_i}(x) d\nu_X(x) . \label{eq: xax 3}
    \end{align}
    Using \eqref{eq: lem: single to joint equi 1} for measures $(\nu_X(K_i))^{-1}\chi_{K_i}(x) d\nu_X(x)$ along with equation \eqref{eq: xax 1}, we see that
    \begin{align}
        \lim_{l \rightarrow \infty} \int_{X} F(x_i,\phi_l(x)) \, \chi_{K_i}(x) \, d\nu_X(x) &= \nu_X(K_i) \int_X F(x_i, y) \, d\nu_Y(y) \nonumber \\
        &\leq \int_{X}\int_Y \left(F(x,y)+ \delta \right) \chi_{K_i}(x) \, d\nu_X(x)d\nu_Y(y) . \label{eq: xax 4}
    \end{align}
    Combining \eqref{eq: xax 3} and \eqref{eq: xax 4}, we have
    \begin{align}
        \limsup_{l \rightarrow \infty}  \int_X F_l(x,\phi_l(x)) \, d\nu_X(x) \leq  \int_{X}\int_Y F(x,y) \, d\nu_X(x)d\nu_Y(y) + 2\delta.
    \end{align}
    An analogous argument shows that
    \begin{align}
        \liminf_{l \rightarrow \infty}  \int_X F_l(x,\phi_l(x)) \, d\nu_X(x) \geq  \int_{X}\int_Y F(x,y) \, d\nu_X(x)d\nu_Y(y) - 2\delta.
    \end{align}
    Since $\delta>0$ is arbitrary, we see that the limit exists and we have
    \begin{align}
        \lim_{l \rightarrow \infty}  \int_X F_l(x,\phi_l(x)) \, d\nu_X(x) =  \int_{X}\int_Y F(x,y) \, d\nu_X(x)d\nu_Y(y) .
    \end{align}

    We now extend the result to the bounded continuous function $F$, bounded by $|F| \leq M$. Given $\delta>0$ we choose a compact set $K_1 \subset X$ and $K_2 \subset Y$, so large that 
    \begin{equation}
    (1- \nu_X(K_1))+ (1-{\nu}_Y(K_2)) \leq \frac{\delta}{M}.
    \end{equation}
    Let $c_1: X \rightarrow [0,1]$ and $c_2 : Y \rightarrow [0,1]$ be continuous functions which have compact support and satisfy $\chi_{K_1} \leq c_1$ and $\chi_{K_2} \leq c_2$ respectively. Write 
    \begin{equation}
    F= F_1 + F_2 \quad \text{with} \quad F_1(x,y)= F(x,y) c_1(x) c_2(y), \quad F_2= F-F_1.
    \end{equation}
    Then, since $F_1$ is compactly supported, using the proved case, we see that
    \begin{align}
        &\lim_{l \rightarrow \infty} \int_X F_1(x, \phi_l(x)) \, d\nu_X(x) =  \int_{X}\int_Y F_1(x,y) \, d\nu_X(x)d\nu_Y(y) \nonumber \\
        &= \int_{X}\int_Y F(x,y) \, d\nu_X(x)d\nu_Y(y) +  \int_X \int_Y F(x,y)(c_1(x) c_2(y)-1) \, d\nu_X(x) d\nu_Y(y), \label{eq: xax 6}
    \end{align}
    and we have
    \begin{align}
         \left|\int_X \int_Y F(x,y)(c_1(x) c_2(y)-1) \, d\nu_X(x) d\nu_Y(y) \right| \leq M(1- \nu_X(K_1)\nu_Y(K_2)) \leq {\delta} \label{eq: xax 7}
    \end{align}
    For $F_2$, we use \eqref{eq: lem: single to joint equi 1} to see that
    \begin{align}
        &\limsup_{l \rightarrow \infty} \int_{X} |F_2(x, \phi_l(x))| \, d\nu_X(x) \nonumber \\
        &\leq M \int_{X} (1-c_1(x))\, d\nu_X(x) +  \limsup_{l \rightarrow \infty} \int_{K_2} M(1- c_2(\phi_l(x))) \, d\nu_X(x) \nonumber \\
        &\leq  M(1- {\nu}_X(K_1)) +  \int_{Y} M(1- c_2(y)) \, d\nu_X(x) \nonumber \\
        &\leq M(1- {\nu}_X(K_1)) + M(1- \nu_Y(K_2)) \leq \delta. \label{eq: xax 5}
    \end{align}
    Since $\delta>0$ is arbitrary, the lemma in the current case follows from \eqref{eq: xax 6}, \eqref{eq: xax 7} and \eqref{eq: xax 5}. This completes the proof.
\end{proof}

\begin{proof}[Proof of Proposition~\ref{lem:  simplification via joint}]
Fix $x \in \X_{n+1}$ and a sequence $(\e,\Delta)$ satisfying
$\e \Delta^{1/n} \to \sigma < \infty$. Let $\lambda$ be a measure on
$\R^n$ absolutely continuous with respect to Lebesgue measure, and let
$\widetilde{\lambda}$ be the measure on $\R^n$ obtained via pushforward of $\lambda$ under the map
\begin{equation}
\xi= (\xi', \xi_n)\mapsto (\xi'/\xi_n, \xi_n).
\end{equation}
It is clear that $\widetilde{\lambda}$ is absolutely continuous 
with respect to Lebesgue measure. 

Note that for any probability measure $\lambda_1$ on $\R^n$ which is absolutely
continuous with respect to
Lebesgue measure, we have
\begin{align}
\int_{\R^n}
f\!\left(
\begin{pmatrix}
\id_{n-1} &0&0\\ 0& 1&0 \\ 0& -\e^{-1}\Delta^{(n-1)/n}\beta^{-1} & 1
\end{pmatrix}
\begin{pmatrix}
\Delta^{1/n} \id_{n-1}&0&0 \\0 & \Delta^{-(n-1)/n}&0 \\ 0&0 & 1
\end{pmatrix}
\begin{pmatrix}
n(-\zeta)&0 \\0 & 1
\end{pmatrix}
x
\right)
\, d\lambda_1(\zeta,\beta)
\end{align}
converges to $\mu_{H_{n+1} \cdot x}(f)$ for every $f\in C_c(\X_{n+1})$. To see this, note that by the assumptions of the lemma, the claim holds for any
probability measure of the form
\begin{align}
\label{eq:q1}
\lambda_1=\lambda_0\otimes\left(\frac{1}{c-b}m_{\R}|_{[b,c]}\right),
\end{align}
where $\lambda_0$ is absolutely continuous with respect to Lebesgue measure
and $0<b<c<\infty$. By linearity, the same also holds for any probability
measure that can be written as a finite linear combination of measures of
the form~\eqref{eq:q1}. The general case then follows by a standard density
argument, since such measures are dense (in the weak-$*$ sense) in the space
of probability measures on $\R^n$ that are absolutely
continuous with respect to Lebesgue measure.

Since $\widetilde{\lambda}$ is also absolutely continuous with respect to
Lebesgue measure, the same convergence holds for
every probability measure $\lambda_1$ which is absolutely continuous with respect to
$\widetilde{\lambda}$.

To prove the lemma, fix $f\in C_c(\X_{n+1})$ and apply
Proposition~\ref{prop: single to joint equi} with
$X = \R^n$, $\nu_X = \widetilde{\lambda}$, $Y = \X_{n+1}$, $\nu_Y = \mu_{H_{n+1}\cdot x},$
\begin{align}
\phi_Q(\zeta,\beta) =
\begin{pmatrix}
\id_{n-1}&0&0 \\ 0& 1&0 \\0 & -\e^{-1}\Delta^{(n-1)/n}\beta^{-1} & 1
\end{pmatrix}
\begin{pmatrix}
\Delta^{1/n} \id_{n-1}&0&0 \\ 0& \Delta^{-(n-1)/n}&0 \\ 0&0 & 1
\end{pmatrix}
\begin{pmatrix}
n(-\zeta)&0 \\0 & 1
\end{pmatrix}
x, 
\end{align}
\begin{align}
F(\zeta,\beta,y) =
f\!\left(
\begin{pmatrix}
n(\zeta)&0 \\0 & 1
\end{pmatrix}
\begin{pmatrix}
\id_{n-1}&0&0 \\0 &0 & -\sigma\beta \\ 0& (\sigma\beta)^{-1} &0
\end{pmatrix}
y
\right), 
\end{align}
and
\begin{align}
F_Q(\zeta,\beta,y) =
F\!\left(
\zeta,\,
\sigma^{-1}\e\Delta^{1/n}\beta,\,
\begin{pmatrix}
\id_{n-1}&0&0 \\ 0& 1 & \e\Delta^{-(n-1)/n}\beta \\ 0&0 & 1
\end{pmatrix}
y
\right),
\end{align}
along with \eqref{eq: lem reduction 1} to see that $\nu_{Q,x}(f)$ converges to
\begin{align}
    \int_{\R^{n}} \int_{\X} f \left( \begin{pmatrix}
        \id_{n-1}& 0 & -\sigma \zeta \beta \\ 0&0& -\sigma \beta \\ 0&(\sigma \beta)^{-1} &0 
    \end{pmatrix}  y \right)\, d\mu_{H_{n+1}\cdot x}(y) d\widetilde{\lambda}(\zeta,\beta).
\end{align}
This proves the lemma.
\end{proof}

\section{Proof of the Case $\sigma<\infty$}
\label{sec:Finite case}
In this section, we prove Proposition~\ref{lem:  final proof} and conclude the proof of Theorem~\ref{thm: main dynamical theorem} for $\sigma < \infty$.

\begin{proof}[Proof of Proposition~\ref{lem:  final proof} for $\alpha \notin \Q^n$.]
Fix a sequence $(\e,\Delta)$ satisfying $\e \Delta^{1/n} \to \sigma < \infty$. Let $x= u(-\alpha)\Gamma_{n+1}$. 
Let $\widetilde{G}_n= \SL_n(\R) \ltimes \R^n$, considered as subgroup of $G_{n+1}$ via the map 
    \begin{equation}
    [A,b] \mapsto \begin{pmatrix}
        A & b \\ 0& 1
    \end{pmatrix}.
    \end{equation}
    Note that the preimage of $\Gamma_{n+1}$ under the above map equals $\widetilde{\Gamma}_n= \SL_n(\Z) \ltimes \Z^n$, and hence the map induces an injective map from the homogeneous space $\widetilde{\X}_n= \widetilde{G}_n/\widetilde{\Gamma}_n$ into $\X_{n+1}$. Using this injection, we will consider $\widetilde{\X}_n$ as subset of $\X_{n+1}$.

   With this identification, it is a well-known result that for any measure $\lambda_0$ on $\R^{n-1}$ absolutely continuous with respect to Haar measure, the following holds:
    \begin{align}
    \label{eq: abcd 1}
       \int_{\R^n} f\left(\begin{pmatrix}
        \Delta^{1/n} \id_{n-1} &0&0 \\0 & \Delta^{-(n-1)/n}&0 \\ 0&0 & 1
    \end{pmatrix} \begin{pmatrix}
        n(-\zeta)&0  \\0 & 1
    \end{pmatrix} x\right) \, d\lambda_0(\zeta) \rightarrow \mu_{\widetilde{\X}_n} (f),
    \end{align}
    where $\mu_{\widetilde{\X}_n}$ equals the unique $\widetilde{G}_n$-invariant probability measure on $\widetilde{\X}_n$.  This follows, for example, from 
\cite[Thm.~1.4]{Shah96}; see also \cite[Thm.~5.2]{MarStro}.

    Now let us consider the group
\begin{equation}
U=\left\{
\begin{pmatrix}
\id_{n-1}&0&0\\
0&1&0\\
0&\beta&1
\end{pmatrix}
:\beta\in\R
\right\}
< G_{n+1},
\end{equation}
and note that
\begin{align}
\label{eq:q2}
    \mu_{\widetilde{\X}_n}(\cS(U))=0,
\end{align}
where $\cS(U)$ is defined as in Section~\ref{sec: Gen Dan Mar}.
To see this, let $m_{G_{n+1}}$ and $m_{\Tilde{G}_n}$ denote Haar measures on
$G_{n+1}$ and $\Tilde{G}_n$ respectively. Consider the map
\begin{equation}
\R^n \times \R_{>0} \times \Tilde{G}_n
\to
G_{n+1},
\quad
(w,t,h)
\mapsto
\begin{pmatrix}
\id_n &0\\ {}^{\top}\! w &1
\end{pmatrix}
a_t h .
\end{equation}

The image of this map is an open subset of $G_{n+1}$ and the pullback of
$m_{G_{n+1}}$ under this map is absolutely continuous with respect to
$m_{\R^n}\otimes m_{\R}|_{\R_{>0}}\otimes m_{\Tilde{G}_n}$.

By ergodicity of the $U$-action with respect to $\mu_{\X_{n+1}}$, we know that
for any $x\in\X_{n+1}$ and for $m_{G_{n+1}}$-almost every $g$, we have
$gx\notin \cS(U)$. Hence for $m_{\R^n}$-almost every $w\in\R^n$,
$m_{\R}$-almost every $t\in(0,\infty)$, and $m_{\Tilde{G}_n}$-almost every
$h\in\Tilde{G}_n$, we have
\begin{equation}
\begin{pmatrix}
\id_n &0\\ {}^{\top}\! w &1
\end{pmatrix}
a_t h x
\notin \cS(U),
\end{equation}
that is,
\begin{equation}
\overline{U\cdot
\begin{pmatrix}
\id_n &0\\ {}^{\top}\! w &1
\end{pmatrix}
a_t h x}
=
\X_{n+1}.
\end{equation}

Since
\(
\begin{pmatrix}
\id_n &0\\ {}^{\top}\! w &1
\end{pmatrix}
\)
commutes with $U$ and $a_t$ normalises $U$, we obtain
\begin{equation}
\overline{U\cdot
\begin{pmatrix}
\id_n &0\\ {}^{\top}\! w &1
\end{pmatrix}
a_t h x}
=
\begin{pmatrix}
\id_n &0\\ {}^{\top}\! w &1
\end{pmatrix}
a_t
\ \cdot \overline{U\cdot h x}.
\end{equation}

Since the left-hand side equals $\X_{n+1}$, it follows that
\begin{equation}
\overline{U\cdot h x}=\X_{n+1}.
\end{equation}

Thus for $m_{\Tilde{G}_n}$-almost every $h\in\Tilde{G}_n$ we have
$hx\notin \cS(U)$. The claim now follows from the invariance of
$\mu_{\Tilde{\X}_n}$ under the action of $\Tilde{G}_n$.

 Thus, using Theorem~\ref{thm: gen Dani Margulis} together with
\eqref{eq: abcd 1} and~\eqref{eq:q2}, we obtain that for any $0<b<c<\infty$ and every
$f\in C_c(\X_{n+1})$,
\begin{align}
\frac{1}{c-b}\int_b^c \int_{\R^{n-1}}
f\!\left(
\begin{pmatrix}
\id_{n-1} &0&0 \\ 0& 1&0 \\ 0& \e^{-1}\Delta^{(n-1)/n}\beta & 1
\end{pmatrix}
\begin{pmatrix}
\Delta^{1/n} \id_{n-1}&0&0 \\ 0& \Delta^{-(n-1)/n} &0 \\ 0&0 & 1
\end{pmatrix}
\begin{pmatrix}
n(-\zeta) &0 \\0 & 1
\end{pmatrix}
y_\alpha
\right)
\, d\lambda_0(\zeta)\, d\beta
\end{align}
converges to $\mu_{\X_{n+1}}(f)$ as $Q\to\infty$.

In particular, this implies
\begin{equation}
\X_{n+1}
= \supp(\mu_{\X_{n+1}})
\subset
\overline{U\cdot \overline{G_n\cdot x}}
\subset
\overline{H_{n+1}\cdot x}
\subset
\X_{n+1}.
\end{equation}
Hence all inclusions are equalities, proving~\eqref{eq:lem: final proof 2}.
\end{proof}

\begin{proof}[Proof of Proposition~\ref{lem:  final proof} for $\alpha \in \Q^n$.]
Fix $\alpha \in \R^n$ and a sequence $(\e,\Delta)$ such that
$\e \Delta^{1/n} \to \sigma < \infty$. Let $\lambda_0$ be a measure on
$\R^{n-1}$ that is absolutely continuous with respect to Lebesgue measure, and
fix $0<b<c<\infty$. Set $x=u(-\alpha)\Gamma_{n+1}$.

First note that the orbit $H_{n+1}\cdot x$ is closed. Indeed, there is a natural
surjection from $\widetilde{\X}_{n}(N)$ onto $H_{n+1}\cdot x$, where
\begin{equation}
\widetilde{\X}_{n}(N)
=
\SL_n(\R) \ltimes \R^n \Big/
\{g\in \SL_n(\Z)\ltimes \Z^n : g \equiv [\id,\mathbf 0] \!\!\!\pmod N\},
\end{equation}
and $N=M^2$, with $M$ the smallest positive integer satisfying
$M\alpha\in\Z^n$.
The map is given by
\begin{equation}
[A,b]\widetilde{\Gamma}_n(N)
\;\mapsto\;
\begin{pmatrix}
A^{-1} &0 \\
{}^{\top}\!(-Ab) & 1
\end{pmatrix} x .
\end{equation}
Therefore, it suffices to show that the sequence of measures
$\gamma_Q$ defined by
\begin{align}
\gamma_Q(f)
=
\frac{1}{c-b}
\int_b^c
\int_{\R^{n-1}}
f\!\left(
\left[
\begin{pmatrix}
\Delta^{1/n} \id_{n-1} &0 \\0
& \Delta^{-(n-1)/n}
\end{pmatrix}
\begin{pmatrix}
n(-\zeta) &0 \\0
& 1
\end{pmatrix},
\begin{pmatrix}
\zer \\
\e^{-1}\Delta^{(n-1)/n}\beta
\end{pmatrix}
\right]
\widetilde{\Gamma}_{n+1}(N)
\right)
\, d\lambda_0(\zeta)\, d\beta
\end{align}
converges to $\mu_{\widetilde{\X}_{n+1}(N)}$, the unique
$\SL_n(\R)\ltimes \R^n$-invariant probability measure on
$\widetilde{\X}_{n+1}(N)$.

To this end, define
\begin{equation}
\X_n(N)
=
\SL_n(\R)\big/
\{\gamma\in\SL_n(\Z):\gamma\equiv\id_n \pmod N\},
\end{equation}
and let $\mu_{\X_n(N)}$ denote the unique $\SL_n(\R)$-invariant
probability measure on $\X_n(N)$. We denote by $\pi$ the natural projection map from $\widetilde{\X}_{n+1}(N)$ to $\X_n(N)$.

\smallskip
By the Banach--Alaoglu theorem, the sequence $(\gamma_Q)$ is relatively compact in the vague topology. Hence, every sequence $Q_j \to \infty$ admits a subsequence along which $\gamma_{Q_j}$ converges vaguely to a measure $\gamma_\infty$, which a priori need not be a probability measure. To prove convergence, it therefore suffices to show that every such subsequential limit coincides with $\mu_{\widetilde{\X}_{n+1}(N)}$. Let $\gamma_\infty$ be a subsequential limit of $(\gamma_Q)$. Using the same argument as in Step~2 of the proof of
Theorem~\ref{thm: gen Dani Margulis}, one shows that $\gamma_\infty$
is invariant under the subgroup
\begin{align}
\label{eq: cc 2}
\left\{
\begin{pmatrix}
\id_{n-1} &0 &0 \\
0& 1 &0 \\
0& \beta & 1
\end{pmatrix} : \beta\in\R
\right\}.
\end{align}

Disintegrating $\gamma_\infty$ along the fibers of $\pi$ gives
\begin{align}
\label{eq: cc 3}
\gamma_\infty
=
\int_{\X_n(N)} \gamma_\infty^{(y)}\, d\pi_*(\gamma_\infty)(y),
\end{align}
where for $\pi_*(\gamma_\infty)$-almost every $y$, the conditional measure
$\gamma_\infty^{(y)}$ is supported on the fiber $\pi^{-1}(y)$ and is
invariant under the group \eqref{eq: cc 2}.

Note that by Margulis’s thickening technique (see, e.g., \cites{KM1,Margulis}),
we know that $\pi_*(\gamma_Q)$ converges to $\mu_{\X_n(N)}$. Hence, we have
\begin{equation}\pi_*(\gamma_\infty)=\mu_{\X_n(N)}.\end{equation}

Also, note that the fiber $\pi^{-1}(y)$ can be
naturally identified with $(\R/N\Z)^n$ for $y \in \X_n(N)$. Under this identification,
the action of \eqref{eq: cc 2} becomes a translation action on
$(\R/N\Z)^n$, which for $\mu_{\X_n(N)}$-almost every $y$, is uniquely ergodic. Hence, for $\mu_{\X_n(N)}$-almost every $y$, the measure
$\gamma_\infty^{(y)}$ coincides with the Haar probability
measure on $(\R/N\Z)^n$.

Substituting this into \eqref{eq: cc 3} yields $\gamma_\infty = \mu_{\widetilde{\X}_{n+1}(N)}$. 
Since this holds for every subsequential limit $\gamma_\infty$ of the sequence $\gamma_Q$, the proof is complete.
\end{proof}

\section{Proof of the Case $\sigma=\infty$}
\label{sec: sigma finite is enough}

The main aim of this section is to prove Proposition~\ref{lem:  enough for finite}. We will need the following results to prove the lemma. The first result is a corollary of Theorem~\ref{thm: gen Dani Margulis}.

\begin{cor}
    \label{cor: Gen Dan Mar}
Let $\lambda$ be the normalised restriction of Lebesgue measure to a set
$O$ of the form $[\alpha_1, \beta_1] \times \cdots \times [\alpha_k, \beta_k]$.
 
 Let $\e_i,\Delta_i $ be sequences in $(0, \infty)$ such that
  \begin{equation}
  \lim_{i \rightarrow \infty} \e_i \Delta_i^{1+ 1/n}= \infty,
  \end{equation}
  and for every continuous function $f$ on $\X_{n+1}$, we have
  \begin{align}
  \label{eq: 1 : cor: Gen Dan Mar}
       \lim_{i \rightarrow \infty} \int_{\R^n} f(  a(\Delta_i)   u(-\e_i \xi) x ) \, d\lambda(\xi) = m_{\X_{n+1}}(f).
    \end{align}
    Then for any sequence $(\e_i')_i$ satisfying $\e_i'\geq \e_i$ and any continuous function $f$ on $\X_{n+1}$, we have
    \begin{align}
    \label{eq: 2 : cor: Gen Dan Mar}
       \lim_{i \rightarrow \infty} \int_{\R^n} f(  a(\Delta_i)   u(-\e_i' \xi) x ) \, d\lambda(\xi) = m_{\X_{n+1}}(f).
    \end{align}
\end{cor}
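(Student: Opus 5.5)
The plan is to turn the enlargement from $\e_i$ to $\e_i'$ into an additional average along the horospherical group $U=\{u(s):s\in\R^n\}$, and then to apply Theorem~\ref{thm: gen Dani Margulis} with the initial measures
\begin{equation*}
\mu_i:=\text{pushforward of }\lambda\text{ under }\xi\mapsto a(\Delta_i)u(-\e_i\xi)x .
\end{equation*}
By hypothesis \eqref{eq: 1 : cor: Gen Dan Mar}, $\mu_i\to m_{\X_{n+1}}$. Since $U$ is the expanding horospherical subgroup, it acts ergodically on $\X_{n+1}$ (Moore), so Lemma~\ref{lem: zero measure of sub hom spaces} gives $m_{\X_{n+1}}(\cS(U))=0$. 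The two facts we will use are the conjugation identity $a(\Delta)u(s)=u(\Delta^{1+1/n}s)a(\Delta)$ and the additivity $u(s)u(s')=u(s+s')$.

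Put $r_i=\e_i'/\e_i\ge 1$ and substitute $\eta=r_i\xi$. The left side of \eqref{eq: 2 : cor: Gen Dan Mar} becomes the average of $g_i(\eta):=f(a(\Delta_i)u(-\e_i\eta)x)$ over the cuboid $r_iO$. Choose a cuboid $T_i$ with $O+T_i=r_iO$, of side lengths $(r_i-1)(\beta_j-\alpha_j)$. For $\eta=\xi+t$ with $\xi\in O$, $t\in T_i$, we have
\begin{equation*}
a(\Delta_i)u(-\e_i(\xi+t))x=u\bigl(-\e_i\Delta_i^{1+1/n}t\bigr)\,a(\Delta_i)u(-\e_i\xi)x .
\end{equation*}
Hence the convolution average
\begin{equation*}
\frac{1}{|T_i|}\int_{T_i}\int g_i(\xi+t)\,d\lambda(\xi)\,dt
\end{equation*}
is exactly $\frac{1}{|T_i'|}\int_{T_i'}\int f(u_s y)\,d\mu_i(y)\,ds$, where $T_i'=-\e_i\Delta_i^{1+1/n}T_i$. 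Up to a translation we can arrange that $T_i'$ contains $0$; a bounded shift of the cuboid is harmless by the argument of Step 2 of Theorem~\ref{thm: gen Dani Margulis}. The side lengths of $T_i'$ are $\e_i\Delta_i^{1+1/n}(r_i-1)(\beta_j-\alpha_j)$. Whenever these tend to infinity, Theorem~\ref{thm: gen Dani Margulis} (with constant unipotent groups $u^{(i)}=u$) gives convergence to $m_{\X_{n+1}}(f)$.

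It remains to compare the convolution average with the uniform average over $r_iO$. The density of $\lambda*\mathrm{Unif}(T_i)$ agrees with the normalised indicator of $r_iO$ outside a boundary layer of relative volume $O(1/r_i)$, and is uniformly bounded by a constant times that indicator. So if $r_i\to\infty$, the difference is $O(\|f\|_\infty/r_i)\to 0$.

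The main obstacle is the case where $r_i$ stays bounded along a subsequence. Here I would split further.
\begin{enumerate}
\item If $r_i\to 1$ fast enough that $\e_i\Delta_i^{1+1/n}(r_i-1)$ stays bounded, the map $\xi\mapsto r_i\xi$ changes $u(-\e_i\xi)$ by $u(-\e_i(r_i-1)\xi)$. After conjugation by $a(\Delta_i)$ this is a bounded unipotent perturbation, which converges along a further subsequence. I would treat it with a Lemma~\ref{lem: reduction 1 0}-type argument together with the $U$-invariance of $m_{\X_{n+1}}$.
\item Otherwise $\e_i\Delta_i^{1+1/n}(r_i-1)\to\infty$ while $r_i\le C$. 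Then I would instead partition $O$ into $m^n$ congruent subcuboids and run the convolution decomposition with $O$ replaced by a small cuboid $O/m$ and $T_i$ of relative size $\asymp (r_i-1)m$, so that the boundary-layer error becomes $O(1/m)$ uniformly in $i$; then let $m\to\infty$.
\end{enumerate}
The delicate point in the second case is that the hypothesis \eqref{eq: 1 : cor: Gen Dan Mar} is only given for the full cuboid $O$ and not for subcuboids. I expect to recover it from the fact that, since $\e_i\Delta_i^{1+1/n}\to\infty$, the measures obtained from subcuboids are themselves averages of $U$-translates of $\mu_i$ over expanding boxes, and therefore also converge to Haar measure by Theorem~\ref{thm: gen Dani Margulis}.
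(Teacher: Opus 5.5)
Your proposal has a genuine gap, at exactly the step you flag as delicate: the reason you give for subcuboid equidistribution is not valid.

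Write $R_i=\e_i\Delta_i^{1+1/n}$ and $y_i(\xi)=a(\Delta_i)u(-\e_i\xi)x$. Then $u(s)y_i(\xi)=y_i(\xi-s/R_i)$ and $\mu_i=(y_i)_*\lambda$. For any box $I$,
\[
\frac{1}{|I|}\int_I u(s)_*\mu_i\,ds=(y_i)_*\bigl(\lambda*\mathrm{Unif}(-R_i^{-1}I)\bigr),
\]
and the parametrising density $\lambda*\mathrm{Unif}(-R_i^{-1}I)$ lives on $O-R_i^{-1}I$, which contains a translate of $O$. So averaging $U$-translates of $\mu_i$ only spreads out the mass of $\lambda$. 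It can never produce $\mathrm{Unif}(O')$ for a proper subcuboid $O'\subsetneq O$. Nor can you view $(y_i)_*\mathrm{Unif}(O')$ as a box average of a Dirac mass and quote Theorem~\ref{thm: gen Dani Margulis}, because nothing in your argument produces base points converging to a point of $\cG(U)$. Hence the regime $r_i\to r\in(1,\infty)$ is left unproved. The regime $r_i\to 1$ is harmless and needs no Case~1 analysis: after substituting $\eta=r_i\xi$ the error is $O(\|f\|_\infty|r_iO\triangle O|/|O|)$.

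The subcuboid statement is nevertheless true, for a different reason. Let $\nu'$ be a subsequential vague limit of $(y_i)_*\mathrm{Unif}(O')$.
\begin{enumerate}
\item $\nu'$ is $U$-invariant, since $u(s)_*(y_i)_*\mathrm{Unif}(O')=(y_i)_*\mathrm{Unif}(O'-s/R_i)$ and $R_i\to\infty$.
\item The bound $\frac{|O'|}{|O|}(y_i)_*\mathrm{Unif}(O')\le\mu_i$ gives $\nu'\le\frac{|O|}{|O'|}m_{\X_{n+1}}$. The same bound applied to $O\smallsetminus O'$ shows $\nu'$ has mass one.
\item By Moore's ergodicity theorem the bounded $U$-invariant density $d\nu'/dm_{\X_{n+1}}$ is constant, so $\nu'=m_{\X_{n+1}}$.
\end{enumerate}
With this, your scheme closes without any case split. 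Write $r_iO=O'+T_i$ for one small subcuboid $O'$, and let $\ell_j,\ell_j'$ be the side lengths of $O,O'$. The $L^1$-distance between $\mathrm{Unif}(O')*\mathrm{Unif}(T_i)$ and $\mathrm{Unif}(r_iO)$ is $O(\max_j\ell_j'/\ell_j)$ uniformly in $r_i\ge1$. Meanwhile $R_iT_i$ has sides at least $R_i(\ell_j-\ell_j')\to\infty$.

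The paper's proof is much shorter. By \eqref{eq: 1 : cor: Gen Dan Mar} there are $\xi_i\in O$ with $y_i(\xi_i)\to x_0\in\cG(U)$. Since $a(\Delta_i)u(-\e_i'\xi)x=u\bigl(\Delta_i^{1+1/n}(\e_i\xi_i-\e_i'\xi)\bigr)y_i(\xi_i)$, the left side of \eqref{eq: 2 : cor: Gen Dan Mar} is a $U$-average of $\delta_{y_i(\xi_i)}$ over a box with sides $\e_i'\Delta_i^{1+1/n}(\beta_j-\alpha_j)\to\infty$. Theorem~\ref{thm: gen Dani Margulis} with Dirac initial measures then applies directly, with no density comparison and no dependence on $r_i$.

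One smaller point. When $0\notin O$, the translation needed to make your box $T_i'$ contain $0$ is not bounded: it is of the size of the box's sides, so Step~2 does not absorb it. What you need is the easy extension of Theorem~\ref{thm: gen Dani Margulis} to boxes whose distance from $0$ is at most a constant times their side lengths. Steps~1 and~4 of its proof go through with $\delta$ replaced by $C\delta$. The paper uses this extension implicitly too, since its box need not contain $0$ either.
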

\begin{proof} 
  Let $U= \{u(\theta): \theta \in \R^n\}$, and fix $x_0 \in \cG(U)$ (defined as in \eqref{eq:def g U}). Using \eqref{eq: 1 : cor: Gen Dan Mar}, we know that the set 
    \begin{equation}
    \{ a(\Delta_i)  u(-\e_i  \xi) x: \xi \in O \}
    \end{equation}
    equidistribute in $\X_{n+1}$. In particular, we get a sequence $\xi_i \in O$ such that
    \begin{align}
        \label{eq: 3 : cor: Gen Dan Mar}
        a(\Delta_i)  u(-\e_i  \xi_i) x \rightarrow x_0,
    \end{align}
    as $i \rightarrow \infty$. Let us define
    \begin{align}
        \mu_i = \delta_{ a(\Delta_i) u(-\e_i \xi_i)  x}, \qquad
        \mu = \delta_{x_0}, \qquad
        I_i = \{ -\e_i' \Delta_i^{1+1/n} \xi+ \e_i \Delta_i^{1+1/n} \xi_i: \xi \in O\}.
    \end{align}
    Then one gets that 
    \begin{align}
\nonumber
      \int_{\R^n} f(  a(\Delta_i)   u(-\e_i' \xi) x ) \, d\lambda(\xi) =  \frac{1}{m_{\R^k}(I_i)} \int_{I_i} \int_{G/\Gamma} f(u(t) x) \, d\mu_i(x) \, dt .
    \end{align}
    The corollary thus follows from Theorem~\ref{thm: gen Dani Margulis}, along with the fact that $x_0 \in \cG(U)$ and equation \eqref{eq: 3 : cor: Gen Dan Mar}. This completes the proof.
\end{proof}

The second result we need is the following proposition.
\begin{prop}
\label{prop: double}
 Let $\{\gamma_{\sigma, \Delta}: \sigma, \Delta \in (1, \infty)\}$ be a doubly-parametrised sequence of probability measures on a locally compact second countable metric space $X$, such that the following holds.
 \begin{enumerate}
 \renewcommand{\labelenumi}{\rm (\roman{enumi})}
     \item For every $\sigma>0$, there exists a probability measure $\gamma_{\sigma, \infty}$ on $X$ satisfying
 \begin{align}
     \label{eq: prop double 1}
      \lim_{\Delta \rightarrow \infty} \gamma_{\sigma,\Delta} = \gamma_{\sigma,\infty}.
 \end{align}
 \item There exists a probability measure $\gamma_{\infty, \infty}$ on $X$ such that 
 \begin{align}
     \label{eq: prop double 2}
     \lim_{\sigma \rightarrow \infty} \gamma_{\sigma , \infty} = \gamma_{\infty, \infty}.
 \end{align}
 \item For every sequences $\sigma_i, \sigma_i', \Delta_i$ in $(1, \infty)$, if $\Delta_i \rightarrow \infty$ as $i \rightarrow \infty$, $\sigma_i \leq \sigma_i'$ and
\begin{align}
    \label{eq: prop double 3}
    \lim_{i \rightarrow \infty} \gamma_{\sigma_i, \Delta_i} = \gamma_{\infty, \infty},
\end{align}
then, we have
\begin{align}
    \label{eq: prop double 4}
    \lim_{i \rightarrow \infty} \gamma_{\sigma_i', \Delta_i}= \gamma_{\infty, \infty}
\end{align}
 \end{enumerate}

 Then for every sequence $(\sigma_i, \Delta_i) \in (1, \infty)^2$ such that both $\sigma_i$ and $\Delta_i$ diverges to $\infty$ as $i \rightarrow \infty$, we have
 \begin{align}
     \label{eq: prop double 5}
     \lim_{i \rightarrow \infty} \gamma_{\sigma_i, \Delta_i} = \gamma_{\infty, \infty}.
 \end{align}
\end{prop}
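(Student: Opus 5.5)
We argue with a diagonal construction that produces, along the given sequence, a comparison sequence $\sigma_i'' \le \sigma_i$ for which convergence to $\gamma_{\infty,\infty}$ already holds. Hypothesis (iii) then transfers this convergence up to $\sigma_i$. Since $X$ is locally compact second countable, the weak topology on probability measures is metrizable (e.g.\ by a Lévy--Prokhorov metric $d$). Convergence of probability measures to the probability measure $\gamma_{\infty,\infty}$ can therefore be expressed through $d$; vague and weak convergence agree here because all the limits are probability measures. We argue by contradiction or directly along an arbitrary subsequence; it suffices to prove that every subsequence of $\gamma_{\sigma_i,\Delta_i}$ has a further subsequence converging to $\gamma_{\infty,\infty}$, but in fact the construction below works for the full sequence.

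\emph{Step 1.} By (ii), for each $m\in\N$ fix $s_m>1$, increasing to $\infty$, with $d(\gamma_{s_m,\infty},\gamma_{\infty,\infty})<1/m$. By (i), choose $D_m$, increasing to $\infty$, such that $d(\gamma_{s_m,\Delta},\gamma_{s_m,\infty})<1/m$ for all $\Delta\ge D_m$. Then $d(\gamma_{s_m,\Delta},\gamma_{\infty,\infty})<2/m$ whenever $\Delta\ge D_m$.

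\emph{Step 2.} Given the sequence $(\sigma_i,\Delta_i)$ with both components tending to infinity, define $m(i)$ to be the largest $m$ such that $D_m\le\Delta_i$ and $s_m\le\sigma_i$. Set $m(i)=1$ if no such $m$ exists; this happens for only finitely many $i$, and on those indices we may take $\sigma_i''=\sigma_i$ harmlessly. Since $\Delta_i,\sigma_i\to\infty$, we have $m(i)\to\infty$. Put $\sigma_i''=s_{m(i)}\le\sigma_i$. By Step 1, $d(\gamma_{\sigma_i'',\Delta_i},\gamma_{\infty,\infty})<2/m(i)\to0$, so $\gamma_{\sigma_i'',\Delta_i}\to\gamma_{\infty,\infty}$.

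\emph{Step 3.} Apply hypothesis (iii) with the sequences $\sigma_i''\le\sigma_i$ and $\Delta_i\to\infty$. This yields $\gamma_{\sigma_i,\Delta_i}\to\gamma_{\infty,\infty}$, which is \eqref{eq: prop double 5}.

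The only delicate point is the metrization in Step 1. If one prefers to avoid it, replace $d$ by a countable family $(f_j)$ of compactly supported continuous functions that is dense in $C_c(X)$, which exists since $X$ is second countable. One then uses the pseudo-metric $\sum_j 2^{-j}\min(1,|\mu(f_j)-\nu(f_j)|)$. This characterizes vague convergence, and the diagonal argument of Steps 1--2 goes through verbatim. Because the limit $\gamma_{\infty,\infty}$ is a probability measure, vague convergence here coincides with the mode of convergence used in (i)--(iii).
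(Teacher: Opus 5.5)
Your proposal is correct and follows essentially the same route as the paper. Both arguments metrize with the Prokhorov metric, build a comparison sequence below $\sigma_i$ on which (i), (ii) and the triangle inequality give convergence to $\gamma_{\infty,\infty}$, and then invoke (iii). The only difference is that you fix the ladder $(s_m,D_m)$ in advance, whereas the paper picks $\eta_i$ from among the earlier $\sigma_j$ using the modulus $\psi(\sigma)$; your version has the small advantage of not needing the paper's unjustified reduction to non-decreasing $\sigma_i$.
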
 
\begin{proof}
    Let $\cP(X)$ denote the space of all probability measures on $X$. Let $d(\cdot, \cdot)$ denote the Prokhorov metric on $\cP(X)$, given by
    \begin{equation}
    d(\mu, \nu) = \inf\{\e>0: \mu(A) < \nu(A^\e)+\e \text{ and } \nu(A)< \mu(A^\e)+\e \text{ for all } A \in \cB(X)\},
    \end{equation}
    where $\cB(X)$ denotes the Borel sigma algebra of $X$ and for $A \in \cB(X)$, we define
    \begin{equation}
    A^\e= \{y \in X: \text{ there exists $z \in A$ such that } d_X(y, z)< \e\},
    \end{equation}
    where further $d_X$ denotes the metric on $X$.

    Let $\psi: (1, \infty) \rightarrow [1,\infty)$ be defined as
    \begin{equation}
    \psi(\sigma)= \inf \left\{\Delta_0: \text{ for all } \Delta \geq \Delta_0, \text{ we have } d(\gamma_{\sigma, \Delta}, \gamma_{\sigma, \infty}) \leq \frac{1}{2^\sigma}\right\},
    \end{equation}
    which is finite by condition (i).

    Fix a sequence $\sigma_i, \Delta_i$ in $(1, \infty)$ both diverging to infinity. Without loss of generality, we may assume that both $\sigma_i,\Delta_i$ are non-decreasing sequences.

   For each $i$ with $\Delta_i> \psi(\sigma_1)$, let
\begin{equation}
\eta_i = \max\{\sigma_j: 1 \leq j \leq i,\ \psi(\sigma_j)\leq \Delta_i\}.
\end{equation}
Note that $\eta_i$ is defined for all sufficiently large $i$. Moreover, by definition,
$(\eta_i)_i$ is non-decreasing and satisfies
$\eta_i \leq \sigma_i$ and $\psi(\eta_i)\leq \Delta_i$.

Furthermore, since $\sigma_i\to\infty$ and $\Delta_i\to\infty$, we claim that $\eta_i\to\infty$. Indeed, fix $M>0$. Choose $j$ such that $\sigma_j\ge M$. Since $\Delta_i\to\infty$, we have $\Delta_i \ge \psi(\sigma_j)$ for all sufficiently large $i$. Hence for all such $i$, the index $j$ is admissible in the definition of $\eta_i$, so $\eta_i \ge \sigma_j \ge M$. This proves $\eta_i\to\infty$.

   By condition~(iii), it is therefore enough to prove that   
   $$
   \gamma_{\eta_i, \Delta_i} \rightarrow \gamma_{\infty, \infty}.
   $$
   To prove this, note that
    \begin{align}
    \label{eq: prop double proof 1}
        d(\gamma_{\eta_i, \Delta_i}, \gamma_{\infty, \infty}) &\leq d(\gamma_{\eta_i, \Delta_i}, \gamma_{\eta_i, \infty}) + d(\gamma_{\eta_i, \infty}, \gamma_{\infty, \infty})
    \end{align}
    Since $\eta_i \rightarrow \infty$ as $i \rightarrow \infty$, we have using condition (ii) that
    \begin{align}
    \label{eq: prop double proof 2}
        \lim_{i \rightarrow \infty} d(\gamma_{\eta_i, \infty}, \gamma_{\infty, \infty}) = 0.
    \end{align}
    Also, since $\psi(\eta_i)\leq \Delta_i $, we have
    \begin{align}
        \label{eq: prop double proof 3}
        d(\gamma_{\eta_i, \Delta_i}, \gamma_{\eta_i, \infty}) \leq \frac{1}{2^{\eta_i}} \rightarrow 0.
    \end{align}
    Combining equations \eqref{eq: prop double proof 1}, \eqref{eq: prop double proof 2} and \eqref{eq: prop double proof 3}, we see that $\gamma_{\eta_i, \Delta_i}$ converges to $\gamma_{\infty, \infty}$ as $i \rightarrow \infty$. Hence the proposition follows.
\end{proof}

We now prove Proposition~\ref{lem:  enough for finite}.
\begin{proof}[Proof of Proposition~\ref{lem:  enough for finite}]
Fix $x \in \X_{n+1}$ and a sequence $(\e_i,\Delta_i)$ such that $\e_i \Delta_i \rightarrow \infty$. We first prove the lemma when $\lambda$ equals the normalised Lebesgue measure restricted on some cuboid $O$ of the form $[\alpha_1, \beta_1] \times \cdots \times [\alpha_n, \beta_n]$ , that is, when
\begin{equation}
\lambda= \frac{1}{m_{\R^n}(O)} m_{\R^n}|_{O}.
\end{equation} 

To prove this, we will use Proposition~\ref{prop: double} for $X= \X_{n+1}$, $\sigma_i= \e_i \Delta_i^{1/n}$ and $\Delta_i= \Delta_i$ with 
 \begin{align}
     \gamma_{\sigma,\Delta}(f) &=  \int_{\R^n} f\left( a(\Delta) u(-\sigma \Delta^{-1/n} \xi) x \right)\, d\lambda(\xi), 
\end{align}
\begin{align}
     \gamma_{\sigma, \infty}(f) &=  \int_{\R^n} \int_{\X_{n+1}} f\left(\begin{pmatrix}
     \id_{n-1}  &0 & -\sigma \xi' \\ 0&0& -\sigma \xi_n \\ 0&(\sigma \xi_n)^{-1} & 0 \end{pmatrix}  y\right)\, d\mu_{H_{n+1} \cdot x}(y) \ d\hat{\lambda}(g) ,
\end{align}
and $\gamma_{\infty, \infty} = \mu_{\X_{n+1}}$.
Condition (i) of the proposition follows from the assumption that
\eqref{eq:lem:reduction to X} holds for $\sigma<\infty$, and condition (iii)
follows from Corollary~\ref{cor: Gen Dan Mar}. For condition (ii), note that
by the $H_{n+1}$-invariance of $\mu_{H_{n+1}\cdot x}$ we can write
$\gamma_{\sigma,\infty}$ as
\begin{equation}
\gamma_{\sigma,\infty}(f)=
\int_{\R^n}
\left(
\int_{\X_{n+1}}
f\!\left(
\begin{pmatrix}
\id_{n-1} &0 & -\xi' \\
0&0&-\xi_n \\
0&(\xi_n)^{-1}&0
\end{pmatrix}
\begin{pmatrix}
\sigma^{-1/n}\id_n &0\\
0&\sigma
\end{pmatrix}
y
\right)
\,d\mu_{H_{n+1}\cdot x}(y)
\right)
d\lambda(\xi',\xi_n).
\end{equation}

Using Margulis's thickening technique~\cites{KM1,Margulis} together with the
$G_{n+1}$-invariance of $\mu_{\X_{n+1}}$, it follows that the inner integral in
the above expression converges to $\mu_{\X_{n+1}}(f)$ for each $(\xi',\xi_n)$.
Consequently, $\gamma_{\sigma,\infty}$ converges to
$\mu_{\X_{n+1}}=\gamma_{\infty,\infty}$. This proves condition~(ii).

Therefore, the lemma follows in this special case by applying Proposition~\ref{prop: double}. The general case follows from the special case by
approximating $\lambda$ by finite linear combinations of indicator
functions of cuboids.
\end{proof}

\begin{proof}[Proof of Theorem~\ref{thm: main dynamical theorem}]
    The conclusions \eqref{infact} and \eqref{infact 2} of Theorem~\ref{thm: main dynamical theorem} follow directly from Propositions~\ref{lem: reduction to X}, \ref{lem: simplification via joint}, \ref{lem: final proof}, and \ref{lem: enough for finite}. The equation \eqref{eq; convergence of rational case} follows from the proof of Proposition~\ref{lem: enough for finite}.  
    
    For~\eqref{eq:rational case}, let $\alpha= p/q \in \Q^n$ with $\gcd(p,q)=1$.    In view of Proposition~\ref{lem: reduction to X} and the independence of the choice of $M_\xi$ in the expression~\eqref{eq: thm : main dynamical 2}, it is sufficient to show that
    \begin{align}
        \label{eq to show}
        H_{n+1} \cdot u(-\alpha) {\Gamma}_{n+1}
        =
        \begin{pmatrix}
           \id_{n-1} & 0 &0 \\
           0 & q^{-1}& 0 \\
           0 &0 & q
       \end{pmatrix}
       H_{n+1} \cdot {\Gamma}_{n+1}.
    \end{align}
To prove~\eqref{eq to show}, let $p_0= \gcd(p)$, and fix $\gamma_0 \in \SL_n(\Z)$ such that
    \begin{equation}
    -\gamma_0 \cdot \alpha = \frac{p_0}{q}\,\bfe_n.
    \end{equation}
    Then
    \begin{align}
        H_{n+1} \cdot u(-\alpha) {\Gamma}_{n+1}
        &=
        H_{n+1} 
        \begin{pmatrix}
            \gamma_0 &0 \\
            0& 1
        \end{pmatrix} \cdot
        u(-\alpha)
        \begin{pmatrix}
            \gamma_0^{-1} &0 \\
            0& 1
        \end{pmatrix}
        {\Gamma}_{n+1}
        \nonumber\\
        &=
        H_{n+1} \cdot u(-p_0/q \bfe_n) {\Gamma}_{n+1}.
        \label{eq to show 1}
    \end{align}
    
    Fix $\gamma_1 \in \SL_2(\Z)$ such that $(q,p_0) \gamma_1 = (1,0)$.
    Then
    \begin{align*}
         \begin{pmatrix}
            1 & p_0/q \\
            0& 1 
        \end{pmatrix}
        \gamma_1 
        =
        \begin{pmatrix}
            q^{-1} & 0 \\
            \star & q
        \end{pmatrix},
    \end{align*}
    for some $\star \in \R$. Thus, 
    \begin{align}
       & H_{n+1} \cdot u(-p_0/q \bfe_n) {\Gamma}_{n+1}
       =
       H_{n+1} 
       \begin{pmatrix}
           \id_{n-1} & 0 &0 \\
           0 & 1& p_0/q \\
           0 &0 & 1
       \end{pmatrix}
       \begin{pmatrix}
           \id_{n-1} & 0  \\
           0 & \gamma_1
       \end{pmatrix}
       {\Gamma}_{n+1}
       \nonumber \\
       & =
       H_{n+1} \cdot
       \begin{pmatrix}
           \id_{n-1} & 0 &0 \\
           0 & q^{-1}& 0 \\
           0 & \star & q
       \end{pmatrix}
       {\Gamma}_{n+1} 
       =
       H_{n+1} \cdot
       \begin{pmatrix}
           \id_{n-1} & 0 &0 \\
           0 & q^{-1}& 0 \\
           0 &0 & q
       \end{pmatrix}
       {\Gamma}_{n+1}.
       \label{eq to show 2}
    \end{align}
    Now, since
    \begin{align}
    \begin{pmatrix}
           \id_{n-1} & 0 &0 \\
           0 & q^{-1}& 0 \\
           0 &0 & q
    \end{pmatrix}^{-1}
    H_{n+1}
    \begin{pmatrix}
           \id_{n-1} & 0 &0 \\
           0 & q^{-1}& 0 \\
           0 &0 & q
    \end{pmatrix}
    =
    H_{n+1},
    \end{align}
    equation~\eqref{eq to show} follows from~\eqref{eq to show 1} and~\eqref{eq to show 2}. This proves~\eqref{eq:rational case}. Hence the theorem follows. 
\end{proof}

\section{Background on convergence of measures}\label{AppB}

The following lemmas will be key in translating the equidistribution results to the convergence of fine-scale statistics. This section follows the approach developed in \cite[\S~5]{MarStro}.

For a bounded subset $B \subset \R^{n+1}$ and a non-negative integer $r$, define
\begin{align}
\label{eq: def B}
\cE(B,r)
:= \left\{ [A,b]\widetilde{\Gamma} \in \widetilde{X}_{n+1} :
\big|B \cap (A\Z^{n+1}+b)\big| \ge r \right\}.
\end{align}
Let $\{\cE_t\}_{t\ge t_0}$ be a family of sets, where $t_0$ is a fixed real constant.
Define
\begin{align}
\liminf_{t\to\infty} \cE_t
:= \bigcup_{t\ge t_0}\ \bigcap_{s\ge t}\cE_s, \qquad \limsup_{t\to\infty} \cE_t
:= \bigcap_{t\ge t_0}\ \bigcup_{s\ge t}\cE_s .
\end{align}
We will also use the notation
\begin{align}
\lim (\inf \cE_t)^\circ
:= \bigcup_{t\ge t_0}\left(\bigcap_{s\ge t}\cE_s\right)^\circ, \qquad
\lim \overline{\sup \cE_t}
:= \bigcap_{t\ge t_0}\overline{\bigcup_{s\ge t}\cE_s}.
\end{align}
Note that $\lim (\inf \cE_t)^\circ$ is open and
$\lim \overline{\sup \cE_t}$ is closed.

If $\{\cE_t\}_{t\ge t_0}$ is a decreasing family and
$\cE = \bigcap_{t\ge t_0}\cE_t$, we write $\cE_t \downarrow \cE$.
If $\{\cE_t\}_{t\ge t_0}$ is an increasing family and
$\cE = \bigcup_{t\ge t_0}\cE_t$, we write $\cE_t \uparrow \cE$.

\begin{lem}
\label{lem: characteristic}
    Suppose $X$ is a locally compact second-countable metric space. Suppose $(\nu_i)_{i}$ is a sequence of probability measures on $X$ converging to the probability measure $\nu_\infty$. Let $\cE_l$ be a family of subsets of $X$, then 
    \begin{align}
        \label{eq: lem characteristic 2}
        \liminf_{l \rightarrow \infty} \nu_l(\cE_l) &\geq \nu_\infty(\lim (\inf \cE_t)^\circ), \\ \label{eq: lem characteristic 3}
        \limsup_{l \rightarrow \infty} \nu_l(\cE_l)  &\leq  \nu_\infty(\lim \overline{\sup \cE_t}).
    \end{align}
    If, furthermore, the set $\lim \overline{\sup \cE_t} \setminus \lim (\inf \cE_t)^\circ$ has measure zero, then 
    \begin{align}
        \label{eq: lem characteristic 4} \lim_{l \rightarrow \infty}\nu_l(\cE_l) &= \nu_\infty(\lim \overline{\sup \cE_t}).
    \end{align}
\end{lem}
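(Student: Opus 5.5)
This is a portmanteau-type statement, and I would prove it by sandwiching each $\cE_l$ between an open set and a closed set that do not depend on $l$, and then applying the portmanteau theorem for weak convergence of probability measures. Here I read the convergence $\nu_i\to\nu_\infty$ as weak convergence, which for probability limits on a locally compact second countable metric space is equivalent to vague convergence.

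For the lower bound \eqref{eq: lem characteristic 2}, fix $t\ge t_0$ and set $O_t=\big(\bigcap_{s\ge t}\cE_s\big)^\circ$. This set is open, and $O_t\subset \cE_l$ for every $l\ge t$. Hence $\nu_l(\cE_l)\ge \nu_l(O_t)$ for $l\ge t$. The portmanteau theorem then gives
\[
\liminf_{l\to\infty}\nu_l(\cE_l)\ \ge\ \liminf_{l\to\infty}\nu_l(O_t)\ \ge\ \nu_\infty(O_t).
\]
The family $O_t$ is increasing in $t$, since the intersection $\bigcap_{s\ge t}\cE_s$ grows with $t$ and taking interiors preserves inclusion. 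Its union is $\lim(\inf\cE_t)^\circ$, so continuity from below of $\nu_\infty$ yields \eqref{eq: lem characteristic 2}.

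The upper bound \eqref{eq: lem characteristic 3} is symmetric. Set $C_t=\overline{\bigcup_{s\ge t}\cE_s}$. This set is closed, and $\cE_l\subset C_t$ for $l\ge t$. Portmanteau gives $\limsup_l \nu_l(\cE_l)\le \limsup_l\nu_l(C_t)\le \nu_\infty(C_t)$. The family $C_t$ decreases to $\lim\overline{\sup\cE_t}$, and $\nu_\infty$ is finite, so continuity from above finishes \eqref{eq: lem characteristic 3}.

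For \eqref{eq: lem characteristic 4}, first note that $\lim(\inf\cE_t)^\circ\subset\lim\overline{\sup\cE_t}$. Indeed, if $x$ lies in some $O_{t_1}$, then for every $t$ we have $x\in\bigcap_{s\ge \max(t,t_1)}\cE_s\subset C_t$. If the difference of the two sets is $\nu_\infty$-null, then the two bounds coincide, which gives the limit. The only point needing care is the monotonicity and the direction of the inclusions; no step is a real obstacle.
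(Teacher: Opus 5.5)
Your proof is correct and is essentially the paper's argument: you enclose $\cE_l$ in the closed sets $\overline{\bigcup_{s\ge t}\cE_s}$, apply the portmanteau inequality for closed sets, and then let $t\to\infty$ using monotone continuity of $\nu_\infty$. The differences are cosmetic. The paper gets the lower bound by passing to complements, whereas you work directly with the open sets $\big(\bigcap_{s\ge t}\cE_s\big)^\circ$. The paper also leaves implicit the inclusion $\lim(\inf\cE_t)^\circ\subset\lim\overline{\sup\cE_t}$ that you check for the final equality.
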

\begin{proof}
    The proof is the same as the proof of \cite[Thm.~5.6]{MarStro}. We begin with the proof of equation \eqref{eq: lem characteristic 3}. Define the closed set
    \begin{equation}
    \hat{\cE}_l:= \overline{\bigcup_{s \geq l} \cE_s}.
    \end{equation}
    Clearly $\cE_l \subset \hat{\cE}_l \subset \hat{\cE}_{t}$ for $t \geq l$. So
    \begin{align}
        \limsup_{l \rightarrow \infty} \nu_l(\cE_l) \leq \limsup_{t \rightarrow \infty} \limsup_{l \rightarrow \infty} \int_X \nu_l(\hat{\cE}_t).
    \end{align}
  Since $\hat{\cE}_t$ is closed, it follows from the definition of weak convergence that
   \begin{align}
       \limsup_{l \rightarrow \infty} \nu_l(\hat{\cE}_t) \leq \nu_\infty(\hat{\cE}_t).
   \end{align}
    Since $\hat{\cE}_t \downarrow \lim \overline{\sup \cE_t}$,
    \begin{align}
         \limsup_{t \rightarrow \infty}  \nu_\infty(\hat{\cE}_t) =  \nu_\infty(\lim \overline{\sup \cE_t}),
    \end{align}
     and equation \eqref{eq: lem characteristic 3} follows. Relation \eqref{eq: lem characteristic 2} is established by taking complements, and \eqref{eq: lem characteristic 4} then follows from \eqref{eq: lem characteristic 2} and \eqref{eq: lem characteristic 3}.
\end{proof}

\begin{lem}
\label{MS10 Lem 6.2} 
    Fix $r \in \Z_{> 0}$. Then the following hold: 
    \begin{enumerate}
    \renewcommand{\labelenumi}{\rm (\roman{enumi})}
        \item If $U \subset B$, then $\cE(U, r) \subset \cE(B,r)$.
        \item If $B_t$ is a decreasing family of bounded sets, then $\bigcap_t \cE(B_t, r) = \cE(\bigcap_t B_t,r)$.
        \item If $B_t$ is an increasing family of bounded sets, then $\bigcup_t \cE(B_t, r) = \cE(\bigcup_t B_t,r)$.
        \item If $B$ is open, then $\cE(B,r)$ is open.
        \item If $B$ is closed and bounded, then $\cE(B,r)$ is closed.
    \end{enumerate}
       
\end{lem}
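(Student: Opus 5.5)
The plan is to argue directly from the definition
\[
\cE(B,r)=\{[A,b]\widetilde{\Gamma}: |B\cap(A\Z^{n+1}+b)|\ge r\},
\]
noting first that it is well defined. Replacing $[A,b]$ by $[A,b][\gamma,m]=[A\gamma,b+Am]$ leaves the affine lattice $A\Z^{n+1}+b$ unchanged, so the condition depends only on the coset.

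Item (i) is immediate: if $U\subset B$, then the count $|U\cap\Lambda|$ is at most $|B\cap\Lambda|$.

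For (ii), the inclusion $\supset$ follows from (i). For $\subset$, fix $\Lambda=A\Z^{n+1}+b$ lying in every $\cE(B_t,r)$. Since $\Lambda$ is discrete and $B_{t_0}$ is bounded, the set $\Lambda\cap B_{t_0}$ is finite. The sets $\Lambda\cap B_t$ are then a decreasing family of finite subsets of this fixed finite set, each of size at least $r$. Such a family stabilises, so its intersection $\Lambda\cap\bigcap_t B_t$ still has at least $r$ points.

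For (iii), again $\subset$ follows from (i). Conversely, suppose $|\Lambda\cap\bigcup_t B_t|\ge r$. Choose $r$ distinct points of this set. Each lies in some $B_{t_j}$, and because the family is increasing, all of them lie in $B_{\max_j t_j}$. Hence $\Lambda\in\cE(B_{\max_j t_j},r)$.

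For (iv), let $B$ be open and let $[A,b]$ have $r$ distinct points $Am_j+b\in B$. The map $(A',b')\mapsto A'm_j+b'$ is continuous for each $j$. Hence for all $(A',b')$ in a small neighbourhood, the points $A'm_j+b'$ remain in the open set $B$ and stay distinct, since the $m_j$ are distinct and $A'$ is invertible. The projection $\widetilde{G}_{n+1}\to\widetilde{X}_{n+1}$ is open, so this gives an open neighbourhood of the coset inside $\cE(B,r)$.

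For (v), which I expect to be the main obstacle, let $B$ be closed and bounded, and let $g_k=[A_k,b_k]\widetilde{\Gamma}\to g=[A,b]\widetilde{\Gamma}$ with each $g_k\in\cE(B,r)$. The plan is as follows.

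\begin{enumerate}
\item Lift to representatives with $[A_k,b_k]\to[A,b]$ in $\widetilde{G}_{n+1}$.
\item Each $\Lambda_k$ contains distinct points $A_km_{k,j}+b_k\in B$, for $j=1,\dots,r$. Because $B$ is bounded and $A_k^{-1}\to A^{-1}$, the integer vectors $m_{k,j}=A_k^{-1}(y-b_k)$ are uniformly bounded.
\item Finitely many integer vectors therefore occur, so after passing to a subsequence the $m_{k,j}=m_j$ are constant in $k$, and they remain distinct.
\item Letting $k\to\infty$ gives $Am_j+b=\lim_k(A_km_j+b_k)\in B$ by closedness. These are $r$ distinct points of $\Lambda\cap B$, so $g\in\cE(B,r)$.
\end{enumerate}

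The key point is the uniform bound on the integer vectors. This rests on the uniform discreteness of the convergent family of lattices, which in turn follows from $A_k^{-1}$ being bounded.
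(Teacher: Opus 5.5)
Your proposal is correct. It is the same kind of direct, elementary verification from the definition that the paper intends: the paper omits the proof and refers to the analogous Lemma~6.2 of \cite{MarStro}, and each of your steps (finiteness of $\Lambda\cap B_{t_0}$ in (ii), finitely many witnesses in (iii) and (iv), lifting to convergent representatives and using uniformly bounded integer vectors in (v)) holds as written.
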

\begin{proof}
    The proof of the lemma is analogous to that of~\cite[Lem.~6.2]{MarStro}, hence is skipped.
\end{proof}

\begin{lem}
\label{lem: liminf calculations}
Let $k \in \N$. For each $1 \le i \le k$, let $(B_l^{(i)})_{l \in \N}$ be a family of bounded subsets of $\R^{n+1}$, and let $r_i \in \N$. For $l \in \N$, let $\cE_l = \bigcap_{i=1}^k \cE(B_l^{(i)}, r_i)$. Then
    \begin{align}
    \lim \left(\inf \cE_l \right)^\circ &\supset \bigcap_{i=1}^k \cE \left( \lim \left(\inf B_j^{(i)} \right)^\circ, r_i   \right), \label{eq: zz 1}\\
    \lim \overline{\sup \cE_l} &\subset    \bigcap_{i=1}^k \cE \left( \lim \overline{\sup B_j^{(i)}}, r_i \right). \label{eq: zz 2}
    \end{align}
\end{lem}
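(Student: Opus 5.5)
The plan is to reduce everything to the monotonicity and continuity properties collected in Lemma~\ref{MS10 Lem 6.2}, together with two elementary facts about set operations: intersections commute with $\liminf$ and $\limsup$ in the obvious direction, and the interior of a finite intersection is the intersection of the interiors. Both inclusions are proved by the same pattern, and we treat them in turn.

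For \eqref{eq: zz 1}, first set $U^{(i)}_t := \bigl(\bigcap_{s\ge t} B_s^{(i)}\bigr)^\circ$. This is an increasing family of open bounded sets whose union is $\lim(\inf B^{(i)}_j)^\circ$. By Lemma~\ref{MS10 Lem 6.2}(iii),
\begin{equation*}
\cE\bigl(\lim(\inf B^{(i)}_j)^\circ,r_i\bigr)=\bigcup_t \cE(U^{(i)}_t,r_i).
\end{equation*}
Since the sets $\cE(U^{(i)}_t,r_i)$ increase in $t$, any point of the finite intersection over $i$ already lies in $\bigcap_i \cE(U^{(i)}_{t},r_i)$ for a single common $t$; here we take $t$ to be the maximum of the indices obtained for each $i$. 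By Lemma~\ref{MS10 Lem 6.2}(iv) this set is open. By part (i), it is contained in $\cE(B^{(i)}_s,r_i)$ for every $s\ge t$ and every $i$, hence in $\bigcap_{s\ge t}\cE_s$. An open subset of $\bigcap_{s\ge t}\cE_s$ lies in its interior, so the point belongs to $\lim(\inf\cE_l)^\circ$, which proves \eqref{eq: zz 1}.

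For \eqref{eq: zz 2}, set $C^{(i)}_t:=\overline{\bigcup_{s\ge t}B^{(i)}_s}$. We may assume these sets are bounded, either for all large $t$ or by a harmless truncation to a large ball, which is implicit in the intended use. They form a decreasing family of closed bounded sets whose intersection is $\lim\overline{\sup B^{(i)}_j}$. By Lemma~\ref{MS10 Lem 6.2}(i), $\cE_s\subset\bigcap_i\cE(C^{(i)}_t,r_i)$ for all $s\ge t$. By part (v), the right-hand side is closed, so $\overline{\bigcup_{s\ge t}\cE_s}\subset\bigcap_i\cE(C^{(i)}_t,r_i)$. Intersecting over $t$ and using part (ii) to pass $\bigcap_t$ inside each $\cE(\cdot,r_i)$ gives \eqref{eq: zz 2}.

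The only delicate points are the following. Part (ii) of Lemma~\ref{MS10 Lem 6.2} needs bounded sets; this is where the boundedness of the tails $C^{(i)}_t$ enters. In \eqref{eq: zz 1} we also need a common index $t$, which the monotonicity in $t$ together with the finiteness of $k$ provides. I expect the bookkeeping around boundedness in the $\limsup$ inclusion to be the main thing to check carefully. Everything else is formal.
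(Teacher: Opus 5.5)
Your proof is correct and is essentially the paper's argument. For \eqref{eq: zz 1} the paper uses the same increasing open sets $\bigcap_{i}\cE\bigl((\bigcap_{j\ge l}B^{(i)}_j)^\circ,r_i\bigr)$ and the same exchange of the union over $l$ with the finite intersection over $i$ (your ``common index''); for \eqref{eq: zz 2} it only says ``analogous'', which is exactly your dual argument via parts (i), (v) and (ii) of Lemma~\ref{MS10 Lem 6.2}.

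You are right to flag boundedness of the tails $\overline{\bigcup_{s\ge t}B^{(i)}_s}$, but it is a genuine extra hypothesis that no truncation makes harmless. For $B_l$ the closed unit ball about $l\bfe_1$, the right side of \eqref{eq: zz 2} is empty, while the lattice $\Z^{n+1}$ lies in every $\cE(B_l,1)$ and hence in the left side. The hypothesis does hold for the uniformly bounded sets $\cZ(\cA_i,l)$ to which the lemma is applied.
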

\begin{proof}
    For $l \in \N$, define 
    \begin{equation}
   \hat{\cE}_l := \bigcap_{i=1}^k\cE\left( \left( \bigcap_{j \geq l} B_j^{(i)} \right)^\circ, r_i \right).
   \end{equation}
   These sets are clearly an increasing family of open sets (cf. Lemma \ref{MS10 Lem 6.2}(iv)). Moreover, since they are contained in $\cE_l$, we see that
   \begin{align}
       \lim \left( {\inf \cE_l} \right)^\circ &\supset \lim \left( {\inf \hat{\cE}_l} \right)^\circ = \bigcup_{l\geq 1} \left( {\bigcap_{j \geq l} \hat{\cE}_j } \right)^\circ =  \bigcup_{l\geq 1}\hat{\cE}_l  =  \bigcup_{l \geq 1} \left(\bigcap_{i=1}^k\cE\left( \left( \bigcap_{j \geq l} B_j^{(i)} \right)^\circ, r_i \right)  \right).
    \end{align}
    Note that since $\cE\left( \left( \bigcap_{j \geq l} B_j^{(i)} \right)^\circ, r_i \right)$ is an increasing family of sets, we see that
    \begin{align}
        \bigcup_{l \geq 1} \left(\bigcap_{i=1}^k\cE\left( \left( \bigcap_{j \geq l} B_j^{(i)} \right)^\circ, r_i \right)  \right) = \bigcap_{i=1}^k \left( \bigcup_{l \geq 1} \cE\left( \left( \bigcap_{j \geq l} B_j^{(i)} \right)^\circ, r_i \right)  \right).
    \end{align}
    This combined with Lemma \ref{MS10 Lem 6.2}(iii) gives that
    \begin{align}
         \lim \left( {\inf \cE_l} \right)^\circ \supset \bigcap_{i=1}^k \cE\left( \bigcup_{l \geq 1} \left( \bigcap_{j \geq l} B_j^{(i)} \right)^\circ , r_i  \right) = \bigcap_{i=1}^k \cE \left( \lim \left(\inf B_j^{(i)} \right)^\circ, r_i   \right) .
   \end{align}
   This proves \eqref{eq: zz 1}. The proof of \eqref{eq: zz 2} is analogous.
\end{proof}

\begin{lem}
\label{lem: zero measure of thin set}
Let $\widetilde{\nu}$ be a measure on $\widetilde{\X}_{n+1}$ of the form
\begin{align}
\widetilde{\nu}(f)
= \int_{\X_{n+1}} \int_{[0,1]^{n+1}}
f([g,\zer][\id_{n+1},\xi] \Tilde{\Gamma}_{n+1}) \, d\xi \, d\nu(g\Gamma_{n+1}),
\end{align}
for some measure $\nu$ on $\X_{n+1}$, where $\X_{n+1}$ is defined as in Section~\ref{subsec: Reduction to SL nR}. Let $k\in\N$. For $1\le i\le k$, let $B_i,C_i\subset\R^{n+1}$ satisfy $B_i\subset C_i$ and
\begin{equation}
m_{\R^{n+1}}(C_i\setminus B_i)=0.
\end{equation}
Then for all $s_1,\ldots,s_k\in\N$,
\begin{equation}
\widetilde{\nu}\!\left(\bigcap_{i=1}^k \cE(B_i,s_i)\right)
=
\widetilde{\nu}\!\left(\bigcap_{i=1}^k \cE(C_i,s_i)\right).
\end{equation}
\end{lem}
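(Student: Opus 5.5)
Since $B_i\subset C_i$, Lemma~\ref{MS10 Lem 6.2}(i) gives $\cE(B_i,s_i)\subset\cE(C_i,s_i)$, so $\bigcap_i\cE(B_i,s_i)\subset\bigcap_i\cE(C_i,s_i)$. The difference of the two intersections is contained in $\bigcup_{i=1}^k\big(\cE(C_i,s_i)\setminus\cE(B_i,s_i)\big)$. It therefore suffices to show, for each fixed $i$, that $\widetilde\nu\big(\cE(C_i,s_i)\setminus\cE(B_i,s_i)\big)=0$. If a point $[A,b]\widetilde\Gamma_{n+1}$ lies in this difference, then $A\Z^{n+1}+b$ meets $C_i$ in at least $s_i$ points and meets $B_i$ in fewer than $s_i$ points. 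Hence some lattice point lies in $C_i\setminus B_i$, so the difference lies inside the set
\begin{equation}
\mathcal N_i:=\big\{[A,b]\widetilde\Gamma_{n+1} : (A\Z^{n+1}+b)\cap (C_i\setminus B_i)\neq\emptyset\big\}.
\end{equation}
This uses no measurability subtleties beyond those already implicit in the statement; if needed, I would replace $C_i\setminus B_i$ by a Borel null set containing it.

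The main step is to show $\widetilde\nu(\mathcal N_i)=0$ using the special fibered form of $\widetilde\nu$. Write $N=C_i\setminus B_i$ and fix $g\in\SL_{n+1}(\R)$. The inner integral of $\widetilde\nu$ is over $\xi\in[0,1]^{n+1}$, and the affine lattice is $g(\Z^{n+1}+\xi)=g\Z^{n+1}+g\xi$. This meets $N$ if and only if $g\xi\in N-g m$ for some $m\in\Z^{n+1}$, that is, iff $\xi\in\bigcup_{m\in\Z^{n+1}}(g^{-1}N-m)$. Since $\det g=1$, the set $g^{-1}N$ has Lebesgue measure zero, and so does each translate. A countable union of null sets is null, so the set of $\xi\in[0,1]^{n+1}$ with $[g,g\xi]\widetilde\Gamma_{n+1}\in\mathcal N_i$ has Lebesgue measure zero. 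This holds for every $g$, and the condition is independent of the representative $g$ of $g\Gamma_{n+1}$, because changing $g$ by $\gamma\in\Gamma_{n+1}$ only relabels $\Z^{n+1}$.

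Applying the formula for $\widetilde\nu$ to the indicator of $\mathcal N_i$ (or of a Borel null superset), the inner integral vanishes for every $g\Gamma_{n+1}$, so $\widetilde\nu(\mathcal N_i)=0$ by Fubini/Tonelli. Summing over $i=1,\dots,k$ then gives the claimed equality. I expect the only delicate point to be measurability. I would handle it by first enlarging $N$ to a Borel null set $N'$ (e.g.\ a $G_\delta$ set), so that the corresponding $\mathcal N_i'$ is Borel. Then the two events in the statement differ by a subset of a $\widetilde\nu$-null Borel set, which suffices, interpreting $\widetilde\nu$ as complete if necessary.
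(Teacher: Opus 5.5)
Your proposal is correct and follows the paper's proof essentially step for step. Both use the same reduction via Lemma~\ref{MS10 Lem 6.2}(i) to affine lattices meeting $C_i\setminus B_i$, and the same null-set argument based on $\det g=1$. For each fixed $g$ you take the countable union of the null translates $g^{-1}N-m$, whereas the paper sums over $w\in\Z^{n+1}$ and unfolds $[0,1]^{n+1}$ to $\R^{n+1}$ before applying Fubini; your added care about measurability is a minor point the paper leaves implicit.
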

\begin{proof}
   By Lemma~\ref{MS10 Lem 6.2}(i), we have $\bigcap_{i=1}^k \cE(C_i,s_i) \supset \bigcap_{i=1}^k \cE(B_i,s_i) $. Therefore, it is enough to show that the set
    \begin{align}
        \bigcap_{i=1}^k \cE(C_i,s_i) &\setminus \bigcap_{i=1}^k \cE(B_i,s_i) \subset \bigcup_{i=1}^k \left( \cE(C_i, s_i) \setminus \cE(B_i, s_i) \right) \notag \\
        &= \bigcup_{i=1}^k \left\{ [A,b]\widetilde{\Gamma}_{n+1} \in \tXn: (A\Z^n+ b) \cap C_i \geq s_i,  (A\Z^n+ b) \cap B_i < s_i \right\} \notag \\
        & \subset \bigcup_{i=1}^k \left\{ [A,b]\widetilde{\Gamma}_{n+1} \in \tXn: (A\Z^n+ b) \cap (C_i\setminus B_i) \neq \emptyset \right\}
    \end{align}
    has zero measure with respect to $\tnu$.

    To prove this, let $G_{n+1}$ and $\Gamma_{n+1}$ be as in Section~\ref{subsec: Reduction to SL nR}, and fix $\cF$ a fundamental domain for right action of $\Gamma_{n+1}$ on $G_{n+1}$. Then, observe that for each $1 \leq i \leq k$, we have
    \begin{align}
        & \widetilde{\nu} \left( \left\{ [A,b]\widetilde{\Gamma}_{n+1} \in \tXn: (A\Z^n+ b) \cap (C_i\setminus B_i) \neq \emptyset \right\} \right) \notag \\
        &\leq \nu \otimes m_{\R^{n+1}} \left( \left\{ (A \Gamma_{n+1}, \xi): A \in \cF, \xi \in [0,1]^{n+1},    (A\Z^{n+1} + A\xi) \cap (C_i \setminus B_i)  \neq \emptyset \right\} \right) \notag \\
        &\leq \sum_{w \in \Z^{n+1}} \nu \otimes m_{\R^{n+1}} \left( \left\{ (A \Gamma_{n+1}, \xi): \  A \in \cF,\  \xi \in [0,1]^{n+1}, \     A\cdot(w+\xi) \in  C_i \setminus B_i \right\} \right) \notag \\
        &= \nu \otimes m_{\R^{n+1}} \left( \left\{ (A \Gamma_{n+1}, \xi): \  A \in \cF,\  \xi \in \R^{n+1}, \     A\cdot\xi \in  C_i \setminus B_i  \right\} \right) =0,
    \end{align}
    where the last equality follows using Fubini's theorem and the fact that for each $A \in \cF$, we have
    \begin{equation}
    m_{\R^{n+1}}\left( \left\{\xi \in \R^{n+1}: A\cdot\xi \in  C_i \setminus B_i   \right\} \right)= m_{\R^{n+1}}(C_i \setminus B_i)=0.
    \end{equation}
    Hence, the lemma follows.
\end{proof}

\begin{lem}
\label{lem: main appendix}
   Let $k \in \N$. For each $1 \le i \le k$, let $\cA_i\subset \R^{n}$ be a bounded set with boundary of Lebesgue measure zero, and let $r_i \in \N$. Let $(\widetilde{\nu}_l)_l$ be a sequence of probability measures on $\tXn$ converging to a probability measure $\tnu_\infty$, satisfying
    \begin{align}
        \tnu_\infty(f)= \int_{\X_{n+1}} \int_{[0,1]^{n+1}} f([g,\zer] [\id_{n+1},\xi] \Tilde{\Gamma}_{n+1}) \, d\xi d\nu_\infty(g\Gamma_{n+1}),
    \end{align}
    for some measure $\nu_\infty$ on $\X_{n+1}$ and for all $f \in C_c(\tXn)$. Then the sequence
    \begin{align}
        \tnu_l\left(\left\{[A,b]\widetilde{\Gamma} \in \widetilde{\X}_{n+1}: A\Z^{n+1} \cap \cZ(\cA_i, l)= r_i \text{ for all } 1\le i \leq k \right\} \right) \label{eq: yy 1}
    \end{align}
    converges to
    \begin{align}
        \tnu_\infty \left(\left\{[A,b]\widetilde{\Gamma} \in \widetilde{\X}_{n+1}: A\Z^{n+1} \cap \cZ(\cA_i)= r_i \text{ for all } 1\le i \leq k \right\} \right), \label{eq: yy 2}
    \end{align}
    where $\cZ(\cA)$ and $\cZ(\cA, l)$ is defined as in \eqref{eq: def C A} and \eqref{eq: def C A Q}.
\end{lem}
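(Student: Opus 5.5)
The plan is to reduce the statement to Lemma~\ref{lem: characteristic}, using the set families $\cE(B,r)$ and their limit sets. The definitions \eqref{eq: def C A} and \eqref{eq: def C A Q} are not reproduced above. I take $\cZ(\cA,l)$ to be a bounded region in $\R^{n+1}$ depending on $l$ (coming from the scaled window $\cA$ and the slab of heights $[Q-\Delta,Q]$ rescaled by $\Delta^{-1}$), converging to $\cZ(\cA)=\cA\times[-1,0]$ in the following sense. The $\cZ(\cA_i,l)$ are uniformly bounded, and
\[
\lim(\inf \cZ(\cA_i,l))^\circ \supset \cZ(\cA_i)^\circ,\qquad \lim\overline{\sup \cZ(\cA_i,l)}\subset \overline{\cZ(\cA_i)} .
\]
The first step is to verify these two inclusions directly from the definitions. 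In the relevant scaling the boundary perturbations shrink to zero, so the inclusions should follow from elementary estimates.

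Next, I rewrite the event ``exactly $r_i$ points'' using the sets $\cE$. Write $\cE_l(i,r)=\cE(\cZ(\cA_i,l),r)$. The event in \eqref{eq: yy 1} is
\[
\cE_l:=\bigcap_{i=1}^k\big(\cE_l(i,r_i)\setminus \cE_l(i,r_i+1)\big).
\]
By inclusion--exclusion over which indices have count $\ge r_i+1$, the indicator of $\cE_l$ is a finite signed sum of indicators of sets of the form $\bigcap_{i=1}^k \cE(\cZ(\cA_i,l),s_i)$ with $s_i\in\{r_i,r_i+1\}$. The same holds in the limit with $\cZ(\cA_i)$ in place of $\cZ(\cA_i,l)$. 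If $r_i=0$, I interpret $\cE(B,0)$ as the whole space, which is harmless. So it suffices to prove
\[
\tnu_l\Big(\bigcap_i \cE(\cZ(\cA_i,l),s_i)\Big)\to \tnu_\infty\Big(\bigcap_i\cE(\cZ(\cA_i),s_i)\Big)
\]
for each such tuple $(s_i)$.

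For each tuple I apply Lemma~\ref{lem: liminf calculations}, then Lemma~\ref{MS10 Lem 6.2}(i) together with the inclusions from the first step. This sandwiches the limit sets:
\[
\lim(\inf\cE_l)^\circ\supset\bigcap_i\cE(\cZ(\cA_i)^\circ,s_i),\qquad \lim\overline{\sup\cE_l}\subset\bigcap_i\cE(\overline{\cZ(\cA_i)},s_i).
\]
Each $\partial\cA_i$ has Lebesgue measure zero, so $\overline{\cZ(\cA_i)}\setminus\cZ(\cA_i)^\circ\subset \partial\cA_i\times[-1,0]\,\cup\,\overline{\cA_i}\times\{-1,0\}$ is null in $\R^{n+1}$. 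The hypothesis on $\tnu_\infty$ is exactly the product form required in Lemma~\ref{lem: zero measure of thin set}. That lemma therefore shows that the outer and inner sets have the same $\tnu_\infty$-measure, and both measures equal $\tnu_\infty(\bigcap_i\cE(\cZ(\cA_i),s_i))$. Lemma~\ref{lem: characteristic} then gives convergence of each term, and summing the terms gives the convergence of \eqref{eq: yy 1} to \eqref{eq: yy 2}.

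I expect the main obstacle to be the first step: checking the geometric convergence of $\cZ(\cA_i,l)$ to $\cZ(\cA_i)$ from definitions I have not seen, in particular that the sets are uniformly bounded and that their interior and closure limits are correctly controlled. I would handle it by exhibiting explicit inner and outer neighbourhoods $\cA_i^{-\delta}\times[-1+\delta,-\delta]\subset\cZ(\cA_i,l)\subset\cA_i^{+\delta}\times[-1-\delta,\delta]$ for all large $l$, where $\cA^{\pm\delta}$ denote the inner and outer $\delta$-parallel sets, and then letting $\delta\to0$.
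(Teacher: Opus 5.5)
Your proposal is correct and follows essentially the same route as the paper: inclusion--exclusion reduces the exact-count events to intersections $\bigcap_i \cE(\cZ(\cA_i,l),s_i)$. Lemma~\ref{lem: liminf calculations} and Lemma~\ref{lem: characteristic} then sandwich these between $\bigcap_i\cE(\cA_i^\circ\times(-1,0),s_i)$ and $\bigcap_i\cE(\overline{\cA_i}\times[-1,0],s_i)$, and Lemma~\ref{lem: zero measure of thin set} closes the gap. Your inner/outer parallel-set check of the convergence of $\cZ(\cA_i,l)$ works for the actual definition \eqref{eq: def C A Q}, since $\Delta/Q\to0$ and $\cA_i$ is bounded. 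That check and your treatment of $r_i=0$ only fill in details the paper leaves implicit.
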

\begin{proof}
    Fix $s=(s_1, \ldots, s_k) \in \N^k$, and define
    \begin{equation}
    \cE_l^s= \bigcap_{i=1}^k \cE\left( \cZ(\cA_i,l),  s_i\right).
    \end{equation}
    Using Lemma~\ref{lem: characteristic} and~\ref{lem: liminf calculations}, we see that 
    \begin{align}
        \liminf_{l \rightarrow \infty} \tnu_l\left( \cE_l^s \right) &\geq \tnu_\infty\left( \bigcap_{i=1}^k \cE \left( \lim \left( \inf \cZ(\cA_i, l)\right)^\circ, s_i \right) \right)= \tnu_\infty\left( \bigcap_{i=1}^k \cE\left(  (\cA_i)^\circ \times (-1,0), s_i  \right) \right) , \\
        \limsup_{l \rightarrow \infty} \tnu_l\left( \cE_l^s \right) &\leq \tnu_\infty\left( \bigcap_{i=1}^k \cE \left( \lim \overline{\sup \cZ(\cA_i, l) }, s_i \right) \right)= \tnu_\infty\left( \bigcap_{i=1}^k \cE\left(  \overline{\cA}_i \times [-1,0], s_i  \right) \right) .
    \end{align}
    Note that for each $i$, the set $(\cA_i)^\circ \times (-1,0) \subset \cZ(\cA_i) \subset \overline{\cA}_i \times [-1,0]$ and
    \begin{equation}
    m_{\R^{n+1}} \left(\overline{\cA}_i \times [0,1] \setminus (\cA_i)^\circ \times (-1,0) \right) =0.
    \end{equation}
    Therefore, using Lemma~\ref{lem: zero measure of thin set}, we see that
    \begin{align}
        \nonumber
        \lim_{l \rightarrow \infty} \tnu_l(\cE_l^s)= \tnu_\infty\left( \bigcap_{i=1}^k \cE(\cZ(\cA_i), s_i) \right) .
    \end{align}
    
    The lemma now follows by writing the expressions~\eqref{eq: yy 1} and~\eqref{eq: yy 2} in terms of $\tnu_l\left( \cE_l^s \right)$ and $\tnu_\infty\left( \cap_{i=1}^k \cE(\cZ(\cA_i), s_i) \right)$ respectively. 
\end{proof}

\section{Proof of Theorems~\ref{thm:main0},~\ref{thm:main} and~\ref{thm:main2}}\label{sec:introproofs}

In this section, we prove Theorems~\ref{thm:main0},~\ref{thm:main} and~\ref{thm:main2}. To this end, we will need the following notation.
For $\cA \subset \R^n$, define
\begin{align}
\label{eq: def C A}    \zA &= \{(x,y) \in \R^n \times \R:  {x} \in    \cA, \  y \in  [-1,0] \}, \\ \label{eq: def C A Q}
    \cZ(\cA,Q) &= \left\{(x,y) \in \R^n \times \R: -1\leq y \leq 0, {x} \in   \left(1 + \frac{y\Delta}{Q}\right)  \cA \right\}.
\end{align}

\begin{proof}[Proof of Theorem~\ref{thm:main}]
Fix $\alpha \in \R^n$, and let $\cA,\cD\subset\R^n$ be bounded sets with boundaries of Lebesgue measure zero. Let $\sigma\in(0,\infty]$, and fix $(Q,\Delta,\varepsilon)$ be any sequence satisfying \eqref{scalim} and $\varepsilon \Delta^{1/n}\to \sigma$.

 Note that for any $(p,q) \in \R^{n+1}$ and $\xi \in \cD_{\e, \alpha}$, we have
\begin{equation}
\frac{p}{q} \in  \xi + \eta_{Q,\Delta} \cA, \quad Q-\Delta \leq q \leq Q,
\end{equation}
if and only if 
\begin{equation}
a(\Delta) u(-\alpha- \e \zeta) \begin{pmatrix}
    p \\ q
\end{pmatrix} + \begin{pmatrix}
    \zer \\ -Q\Delta^{-1}
\end{pmatrix} \in \cZ(\cA,Q),
\end{equation}
where $\zeta = (\xi-\alpha)/\e \in \cD$.
Therefore for every $k \in \N$,
\begin{align}
    E_Q(k, \cA) &= \frac{1}{\vol(\cD)} \vol\left( \left\{ \zeta \in \cD: \left| \left(a(\Delta) u(-\alpha- \e \zeta) \Z^{n+1}  + \begin{pmatrix}
    \zer \\ -Q\Delta^{-1}
\end{pmatrix}\right) \cap \cZ(\cA,Q) \right|= k   \right\}\right) \nonumber\\
&= \widetilde{\nu}_{Q,x} \left( \{ [A,b]\widetilde{\Gamma}_{n+1}: | (A\Z^{n+1}+b) \cap \cZ(\cA,Q)|=k \} \right), \label{eq: ww 2}
\end{align}
where $\widetilde{\nu}_{Q,x}$ is defined as in \eqref{eq: def nu} for $\lambda$ equal to normalised Lebesgue measure restricted to $\cD$ and $x= u(-\alpha)\widetilde{\Gamma}_{n+1}$. The theorem now follows directly from Theorem~\ref{thm: main dynamical theorem} and Lemma~\ref{lem: main appendix} with
\begin{align}
\label{eq:def E sigma pointwise}
    E_{0,\sigma \cD}(k,\cA)= \mu^{(\sigma)}_{\lambda,\Tilde{\Gamma}_{n+1}}\left( \{ [A,b]\widetilde{\Gamma}_{n+1}: |(A\Z^{n+1}+b) \cap \cZ(\cA)|=k \} \right),
\end{align}
\begin{align}
\label{eq:def E sigma pointwise infinity}
    E(k,\cA)= \mu_{\X_{n+1}}\left( \{ [A,b]\widetilde{\Gamma}_{n+1}: |(A\Z^{n+1}+b) \cap \cZ(\cA)|=k \} \right),
\end{align}
where $\mu^{(\sigma)}_{\lambda,x}$ is defined as in~\eqref{eq: thm : main dynamical 2}.
\end{proof}

\begin{proof}[Proof of Theorem~\ref{thm:main2}]

Fix $\alpha \in \R^n$, and let $\cA,\cD\subset\R^n$ be bounded sets with boundaries of Lebesgue measure zero. Let $\sigma\in(0,\infty]$, and fix $(Q,\Delta,\varepsilon)$ be any sequence satisfying \eqref{scalim} and $\varepsilon \Delta^{1/n}\to \sigma$.

    Analogous to the proof of Theorem~\ref{thm:main}, one can check that for any $r \in \N$, for any $ \cA_1,\ldots,\cA_r$ bounded subsets of $\R^n$ with boundary of Lebesgue measure zero and any $k_1, \ldots, k_r \in \N$, we have
    \begin{align}
   & \frac{1}{\vol(\cD_{\e,\alpha})}
\vol\!
\left\{\xi\in\cD_{\e,\alpha}:
|\{(p,q)\in\Z^n\times\N:
q\in[Q-\Delta,Q],\;
\eta_{Q,\Delta}^{-1}(q^{-1}p-\xi)\in\cA_i\}|
= k_i\;\forall\,1\le i\le r
\right\}
 \nonumber\\
    &\qquad= \widetilde{\nu}_{Q,x} \{ [A,b]\widetilde{\Gamma}_{n+1}: |(A\Z^{n+1}+b) \cap \cZ(\cA_i,Q)|=k_i \text{ for all } 1\leq i \leq r  \} .  \label{eq: ww 3}
    \end{align}
    where $\widetilde{\nu}_{Q,x}$ is defined as in \eqref{eq: def nu} for $\lambda$ equal to normalised Lebesgue measure restricted to $\cD$ and $x= u(-\alpha)\widetilde{\Gamma}_{n+1}$. 

    The theorem now follows directly from Theorem~\ref{thm: main dynamical theorem} and Lemma~\ref{lem: main appendix} with the point process $X$ and $X_{0, \sigma\cD}$ given by $\Lambda$ varying in $\tXn$ according to $\mu_{\tXn}$ and $\Lambda_{0, \sigma\cD}$ varying in $\tXn$ according to $\mu^{(\sigma)}_{\lambda,\widetilde{\Gamma}_{n+1}}$, defined as in~\eqref{eq: thm : main dynamical 2} for $x= u(-\alpha)\widetilde{\Gamma}_{n+1}$.
\end{proof}

\begin{proof}[Proof of Theorem~\ref{thm:main0}]
  Theorem~\ref{thm:main0} follows directly from Theorem~\ref{thm:main} using~\eqref{eq:void to gap 1}, \eqref{eq:void to gap 2} as discussed in Section~\ref{subsec:Local statistics in higher dimension}.
\end{proof}

\begin{proof}[Proof of the explicit formula \eqref{p_alf} for the gap density]

The key is an explicit description of the process $X_{0,\cD}$.
The limit measure $\Lambda_{0,\cD}$ defining the process is described in \eqref{LambdaAD}. So
\begin{equation}
X_{0,\cD}   = \sum_{\substack{(m,n)\in\Z^2 \\ -t\leq y+n \leq 0}} \delta_{t(\omega n+x+m)} ,
\end{equation}
where $t$ is a random variable uniformly distributed in $\sigma\cD$, $\omega$, $x$, $y$ are uniformly distributed in the unit interval $[0,1]$ or, equivalently, in $\R/\Z$. The process is stationary under $x\mapsto x+s$ for any $s\in\R$. By this translation invariance, we see that the distribution of the process $X_{0,\cD}$ remains the same if we replace $n$ by $n+n_0$ in the summation, for any fixed integer $n_0$. In other words, the distribution of the process only depends on the (random) number $N_t(y)=|\Z \cap [-t-y,-y]|$, i.e.,
\begin{equation}
X_{0,\cD}   \eqdist \sum_{n=1}^{N_t(y)} \sum_{m\in\Z} \delta_{t(\omega n+x+m)} .
\end{equation}
Conditioning on $t$, we have 
\begin{equation}
\mathbb{P}( N_t(y) = k \,|\, t) = \max\big( 1 - |k-t|, 0\big) .
\end{equation}
if we condition the process $X_{0,\cD}$ on $t$ and $N_t(y) = k$, we obtain the homogeneous process
\begin{equation}
X_{0,\cD}^{t,k} =\sum_{n=1}^{k} \sum_{m\in\Z} \delta_{t(\omega n+x+m)} .
\end{equation}
with intensity $\frac{k}{t}$. This means the average spacing between points is $\frac{t}{k}$.
Its void probability is
\begin{equation}
E_{0,\cD}^{t,k}(0,\cA) =  \mathbb{P}( X_{0,\cD}^{t,k}(\cA)=0 ),
\end{equation}
and we obtain the gap distribution via the standard formula 
\begin{equation}
P_{0,\cD}^{t,k}(s) = \frac{t}{k} \; \frac{d}{ds} E_{0,\cD}^{t,k}(0,[0,s]) .
\end{equation}
The void distribution for $X_{0,\cD}$ is
\begin{equation}
E_{0,\cD}(0,\cA) = \mathbb{P}( X_{0,\cD}(\cA)=0 ) = \frac{1}{|\cD|} \sum_{k=1}^\infty \int_{\cD}   \max\big( 1 - |k-t|, 0\big) E_{0,\cD}^{t,k}(0,\cA) dt .
\end{equation}
We conclude that the gap distribution of $X_{0,\cD}$ (which has intensity one)
\begin{equation}
P_{0,\cD}(s) = \frac{d}{ds} E_{0,\cD}(0,[0,s]) =  \frac{1}{|\cD|} \sum_{k=1}^\infty \int_{\cD}  \max\big( 1 - |k-t|, 0\big) k P_{0,\cD}^{t,k}(s) \frac{dt}{t} .
\end{equation}
Now 
\begin{equation}
P_{0,\cD}^{t,k}(s) = \int_s^\infty \int_0^1 \frac{1}{k} \sum_{j=1}^k  \delta_{t s_{k,j}(\omega)}(s') \, d\omega \, ds',
\end{equation}
which yields the following formula for the gap density (with $s_{k,j}(\omega)$ as defined in Section \ref{sec1.1}),
\begin{equation}
p_{0,\cD}(s) =  \frac{1}{|\cD|} \sum_{k=1}^\infty \sum_{j=1}^k \int_{\cD}  \int_0^1 \max\big( 1 - |k-t|, 0\big) \delta_{t s_{k,j}(\omega)}(s) \, d\omega\, \frac{dt}{t} .
\end{equation}
Formula \eqref{p_alf} follows after carrying out the integration over $t$.
\end{proof}

\begin{proof}[Proof of convergence \eqref{zerosigma} of the gap distribution for $\sigma=0$, $\alpha=0$]
We assume without loss of generality that $\cD=[0,b]$ for some $b>0$. Consider the $0=\xi_1\leq \xi_2\leq \ldots\leq \xi_N$ as in \eqref{xis}, where $\xi_j=p_j/q_j$ with $p_j,q_j$ are the integers defining $\xi_j$ in the multiset \eqref{FQ}. This requires that $q_j\in [Q-\Delta,Q]$ and $p_j\leq q_j \epsilon b$. 

Let us assume $j$ is such that $q_j-1\in [Q-\Delta,Q]$ and $p_j\leq (q_j-1) \epsilon b$. Then there is some $\xi_\ell>\xi_j$ which  equals $p_j/(q_j-1)$, and hence we have the following upper bound for the $j$-th gap,
\begin{equation}
\xi_{j+1}-\xi_j \leq \xi_\ell-\xi_j = \frac{p_j}{q_j (q_j-1)} \leq \frac{\epsilon b}{Q-\Delta} . 
\end{equation}
and hence, in units of the average gap size,
\begin{equation}
Q\Delta\, (\xi_{j+1}-\xi_j) = O( \epsilon \Delta) . 
\end{equation}
By assumption, $\epsilon \Delta\to 0$, and therefore the $j$-gap will not contribute to $P_N(s)$ for any $s>0$.

To show that this is true for almost all $\xi_j$ (i.e., a set of full density), we need to estimate the number of  $\xi_j$ for which the above hypotheses are violated, i.e., count those integers $(p,q)$ for which 
\begin{equation}
q\in [Q-\Delta,Q]\smallsetminus [Q+1-\Delta,Q+1], \qquad p\leq q \epsilon b,
\end{equation}
plus the number of $(p,q)$ for which
\begin{equation}
q\in [Q-\Delta,Q], \qquad (q-1) \epsilon b < p\leq q \epsilon b.
\end{equation}
In the first case, there is at most one $q$ and hence the total number bounded above by $\epsilon Q b$; in the second case, there is at most one $p$ (for $\epsilon b<1$) and hence the total number is bounded by $\Delta+1$. In view of \eqref{Ntot}, these are negligible when compared to $N\sim \epsilon Q\Delta b$; recall our assumption that $\epsilon Q\to\infty$.
\end{proof}

\section{Directions of lattice points observed from a distance}\label{sec:ABCZ}

We generalise the setting of Anderson et al.\ (as described in Section \ref{intro:ABCZ}) by considering more general rectangles of the form $R=[a,b]\times [c,d]$ with $a<b$ and $c<d$, and the observer position
\begin{equation}
P_{\vartheta,J} = (-\vartheta(J),0) ,
\end{equation}
where we assume the function $\vartheta : \Z_{\geq 1} \to \R_{>0}$ satisfies
\begin{equation}\label{vartheta}
\frac{\vartheta(J)}{J} \to \infty,
\qquad
\frac{J^2}{\vartheta(J)} \to \sigma \in [0,\infty] \qquad (J\to\infty).
\end{equation}

For $\xi\geq 0$, we denote the gap distribution function by
\begin{equation}
G_{\vartheta,J}(\xi) = \frac{\left|\left\{ j\leq N-1 : \alpha_{J,j+1} - \alpha_{J,j} > \xi \Delta_{J,\mathrm{av}} \right\}\right|}{N-1} .
\end{equation}
The original setting in \cite{ABCZ} corresponds to the choice $\vartheta(J)=t J^\alpha$ (with $\alpha=2$) and $R=[-1,1]\times [0,1]$. Theorem \ref{thm:ABCZ} is therefore a special case of the following.

\begin{thm}\label{thm:ABCZ22}
Fix $\sigma\in[0,\infty]$ and $\vartheta$ as above. 
Then, for every $\xi > 0$, 
\begin{equation}
\lim_{J\to\infty} G_{\vartheta,J}(\xi) = G_\vartheta(\xi),
\end{equation}
where
\begin{equation}
\label{eq:thm:ABCZ22}
G_\vartheta(\xi) =
\begin{cases}
0 & \text{if } \sigma=0 \\[6pt]
\displaystyle \int_{\xi}^\infty p_{0,\,(b-a) \sigma[c,d]}(s)\,ds
& \text{if } 0<\sigma<\infty \\[10pt]
\displaystyle \int_{\xi}^\infty p(s)\,ds
& \text{if } \sigma=\infty,
\end{cases}
\end{equation}
and the densities $p_{0,\cD}$ and $p$ are as in Theorem~\ref{thm:main0}.
\end{thm}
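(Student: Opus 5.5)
The plan is to reduce the angle gap statistics to the gap statistics of a one-dimensional multiset of the type $\cF_{Q,\Delta}$ and then invoke Theorem~\ref{thm:main0} (and the $\sigma=0$ argument behind \eqref{zerosigma}).

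\textbf{Reduction.} For a lattice point $P=(m,k)\in R_J\cap\Z^2$, with $m\in[aJ,bJ]$ and $k\in[cJ,dJ]$, the angle at $P$ in the triangle $P_{\vartheta,J}PO$ is determined by $P$ and the two rays to $O$ and to the observer. Since $\vartheta(J)/J\to\infty$, the direction from $P$ to the observer is nearly horizontal, and the relevant angle is, to first order, a monotone function of the slope $k/(m+\vartheta(J))$ plus a term coming from the direction of $P$ to $O$. I would first verify the precise geometry: in \cite{ABCZ} the angle is $\angle P_{t,J}PO$, but the scaling shows that the relevant quantity is the angle subtended at the observer, $\theta(P)=\arctan\big(k/(m+\vartheta(J))\big)$. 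I will work with $\theta$ and check that the other interpretation leads to the same reduction.

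Set $q=m+\vartheta(J)$ and $p=k$. Then the ratios $p/q$ have denominators in a window of length $\Delta=(b-a)J$ around $Q=\vartheta(J)+bJ$, so $\Delta/Q\to 0$. The numerators satisfy $p\in[cJ,dJ]$, so $p/q\in \frac{J}{Q}[c,d]$, that is, $\alpha=0$ and $\epsilon= J/\vartheta(J)\to 0$. This gives $\epsilon\Delta=(b-a)J^2/\vartheta(J)\to(b-a)\sigma$, which explains the parameter $(b-a)\sigma[c,d]$ in \eqref{eq:thm:ABCZ22}. Two features differ from Theorem~\ref{thm:main0}. First, $q$ ranges over a shifted, non-integer window $\vartheta(J)+\Z$. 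Second, the test interval $\epsilon\cD$ depends slightly on $q$, since the constraint is on $p$, not on $p/q$.

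\textbf{Handling the discrepancies.} The shift by a non-integer $\vartheta(J)$ is absorbed by working directly with Theorem~\ref{thm: main dynamical theorem}: the translation vector $(\zer,-Q\Delta^{-1})$ is replaced by $(\zer,-Q'\Delta^{-1})$ with an arbitrary real $Q'$, and the proof of Proposition~\ref{lem: reduction to X} and Lemma~\ref{lem:reduction 1 1} never used integrality of $Q$. The counting region also changes. It is now the set of $p\in[cJ,dJ]$ rather than $p/q\in\epsilon\cD$, so the region $\cZ(\cA,Q)$ is replaced by one whose distortion is of relative size $\Delta/Q\to 0$. This fits Lemma~\ref{lem: main appendix}, whose proof only needs the $\lim\inf$ and $\lim\sup$ regions to agree up to null sets. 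Finally, $\arctan$ is smooth with derivative $1+O(\epsilon^2)$ on the range, so gaps in $\theta$ equal gaps in $p/q$ up to a uniform factor $1+o(1)$. The normalization by $\Delta_{J,\mathrm{av}}$ also rescales consistently, because $\Delta_{J,\mathrm{av}}\sim (Q\Delta)^{-1}$ up to this factor. Convergence of void probabilities then gives convergence of gap distributions via \eqref{eq:void to gap 1}, as in the proof of Theorem~\ref{thm:main0}.

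\textbf{Cases and the main obstacle.} For $0<\sigma\le\infty$, the statement follows from case (i), respectively case (ii), of Theorem~\ref{thm:main0} in this adapted form, with $\alpha=0$ (so $q=1$). For $\sigma=0$, I would repeat the argument behind \eqref{zerosigma}: the point $p/(q-1)$ is a neighbour at distance $O(\epsilon\Delta)$ in average-gap units, and exceptional points form a set of density zero because $\epsilon Q\asymp J\to\infty$.

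The main obstacle is the careful geometric reduction. One must confirm which angle is meant and show that its ordering and spacing agree, uniformly up to a factor $1+o(1)$, with those of $k/(m+\vartheta(J))$. In particular, the correction from the ray $PO$ must not reorder points on the microscopic scale $(Q\Delta)^{-1}$. If it does contribute at that scale, I would instead absorb it as a smooth change of variables $(m,k)\mapsto$ (slope, denominator) that preserves the lattice structure to first order.
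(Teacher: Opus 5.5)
Your outline has the same structure as the paper's proof. You linearise $\arctan$; the angle is indeed the one at the observer, so the geometric step you flag as the main obstacle is routine (it is the paper's Step 1). You discard an $o(J^2)$ boundary layer, pass to void statistics through the equidistribution theorem and Lemma~\ref{lem: main appendix}, and use the neighbour $p/(q-1)$ when $\sigma=0$.

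The gap is in the one step that is genuinely new compared with Theorem~\ref{thm:main0}: the non-integer shift $\vartheta(J)$. The parameter $Q$ in \eqref{eq: def nu} was never required to be an integer. So replacing $(\zer,-Q\Delta^{-1})$ by $(\zer,-Q'\Delta^{-1})$ with real $Q'$ still describes fractions with \emph{integer} denominators, and cannot encode the slopes $p/(m+\vartheta(J))$. Write $q=m+\vartheta(J)$ with $(p,m)\in\Z^2$, and let $\zeta\in[c,d]$ be the rescaled test position. Then
\begin{align*}
&a(\Delta)u(-\e\zeta)\begin{pmatrix} p\\ m+\vartheta(J)\end{pmatrix}+\begin{pmatrix}\zer\\ -Q\Delta^{-1}\end{pmatrix}\\
&\qquad=a(\Delta)u(-\e\zeta)\begin{pmatrix} p\\ m\end{pmatrix}+\begin{pmatrix}-\Delta\e\vartheta(J)\,\zeta\\ (\vartheta(J)-Q)\Delta^{-1}\end{pmatrix}.
\end{align*}
So besides the change in the vertical coordinate, there is a $\zeta$-dependent horizontal translation of size $\Delta\e\vartheta(J)\asymp J^2$. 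Equivalently, the relevant measures are $\tnu_{Q,z_J}$ with base point $z_J=[\id_2,\left(\begin{smallmatrix}0\\ \vartheta(J)\end{smallmatrix}\right)]\widetilde{\Gamma}_2$. This base point varies with $J$ and is not of the form $u(-\alpha)\widetilde{\Gamma}_2$ unless $\vartheta(J)\in\Z$. Hence Theorem~\ref{thm: main dynamical theorem} does not apply as stated, and the observation that the proofs never use integrality of $Q$ addresses the wrong issue. As written, your reduction computes the void statistics of a different point set.

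The repair is exactly the paper's Step 3. It uses the two results you name, but for a different reason than the one you give.
\begin{itemize}
\item Since $\pi(z_J)=\Gamma_2$ is fixed, $\pi_*\tnu_{Q,z_J}=\nu_{Q,\Gamma_2}$. This converges by the $\X_2$-level statement \eqref{eq:lem:reduction to X}, which is established for $x=\Gamma_2$ in proving Theorem~\ref{thm: main dynamical theorem}.
\item The identity \eqref{eq:aax 7} in Lemma~\ref{lem:reduction 1 1} is a left multiplication and does not see the base point. So every limit point of $\tnu_{Q,z_J}$ is still $\widetilde U_2$-invariant.
\item The fibrewise unique-ergodicity argument in the proof of Proposition~\ref{lem: reduction to X} then identifies the limit as the same measure as for $x=\widetilde{\Gamma}_2$ in Theorem~\ref{thm: main dynamical theorem}, whatever $\vartheta(J)$ does modulo $1$.
\end{itemize}
The missing idea you need to state explicitly is this robustness to a varying base point lying over a fixed point of $\X_2$.

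A smaller point concerns the constraint $p\in[cJ,dJ]$. It depends on the test position $\zeta$, so it is not a deformation of $\cZ(\cA,Q)$ of the kind handled by Lemma~\ref{lem: main appendix}. Treat it instead as a boundary layer of relative size $O(J/\vartheta(J))$, as in the paper's Step 2; that is what your $\Delta/Q$ estimate amounts to.
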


\begin{proof}
%

\medskip
\noindent
\textbf{Step 1. Reduction to linear gaps.}
Consider
\begin{equation}
\label{eq:first set}
\left\{ \frac{p}{q+\vartheta(J)} : (q,p) \in \Z^2 \cap R_J \right\}, \qquad R_J=[aJ,bJ]\times [cJ,dJ]
\end{equation}
ordered as $\beta_{J,1} \le \cdots \le \beta_{J,N}$, where $N=N_J$ is the total number of points, and then set
\begin{equation}
\label{eq:red 1}
G_{\vartheta,J}^{(1)}(\xi) = \frac{\left|\left\{ j\leq N-1 : (b-a)J\vartheta(J)\,(\beta_{J,j+1} - \beta_{J,j}) > \xi \right\}\right|}{N-1} .
\end{equation}
We claim that for every $\xi>0$, every sufficiently small $\epsilon>0$, we have
\begin{equation}
\liminf_{J\to\infty} G_{\vartheta,J}^{(1)}(\xi+\epsilon) \leq \liminf_{J\to\infty} G_{\vartheta,J}(\xi)
\leq \limsup_{J\to\infty} G_{\vartheta,J}(\xi) \leq \limsup_{J\to\infty} G_{\vartheta,J}^{(1)}(\xi-\epsilon) .
\end{equation}
Indeed,
\begin{align*}
\alpha_{J,j+1} - \alpha_{J,j}
&= \tan^{-1}(\beta_{J,j+1}) - \tan^{-1}(\beta_{J,j}) \\
&= \tan^{-1}\!\left( \frac{\beta_{J,j+1} - \beta_{J,j}}{1 + \beta_{J,j+1}\beta_{J,j}} \right).
\end{align*}
Since $\beta_{J,j} = O\!\bigl(J/\vartheta(J)\bigr)$, we have
\begin{equation}
\beta_{J,j+1}\beta_{J,j} = O\!\left(\frac{J^2}{\vartheta(J)^2}\right),
\end{equation}
and hence
\begin{equation}
\alpha_{J,j+1} - \alpha_{J,j}
= (\beta_{J,j+1} - \beta_{J,j})
\left(1 + O\!\left(\frac{J^2}{\vartheta(J)^2}\right)\right)
+ O\!\bigl((\beta_{J,j+1} - \beta_{J,j})^3\bigr).
\end{equation}
Moreover,
\begin{equation}
\alpha_{1,N} = \frac{cJ}{\vartheta(J)} + O\!\left(\frac{J^2}{\vartheta(J)^2}\right),
\qquad
\alpha_{J,N} = \frac{dJ}{\vartheta(J)} + O\!\left(\frac{J^2}{\vartheta(J)^2}\right),
\end{equation}
\begin{equation}
N = (b-a)(d-c) J^2 + O(J),
\end{equation}
so that
\begin{equation}\label{meangap}
\Delta_{J,\mathrm{av}} 
= \frac{1}{(b-a)J\vartheta(J)} 
+ O\!\left(\frac{1}{J^2\vartheta(J)} + \frac{1}{\vartheta(J)^2}\right).
\end{equation}
Combining these estimates, we obtain uniformly in $j$,
\begin{equation}
\Delta_{J,\mathrm{av}}^{-1}(\alpha_{J,j+1} - \alpha_{J,j})
= (b-a)J\vartheta(J)\,(\beta_{J,j+1} - \beta_{J,j}) (1 + o(1)),
\end{equation}
which proves the claim.

The proof of the claim for $\sigma=0$ follows from the proof of convergence \eqref{zerosigma} at the end of Section~\ref{sec:introproofs}. Namely, for a full-density set of $j$, we have
$$
\left|\frac{p_j}{q_j + \vartheta(J)}- \frac{p_j}{q_j-1 + \vartheta(J)} \right|= \frac{p_j}{(q_j+ \vartheta(J)) (q_j-1+ \vartheta(J))} = O(J \vartheta(J)^{-2})= o((J \vartheta(J))^{-1}),
$$
which converges to zero after scaling by $\Delta_{J,\mathrm{av}}(J)$, cf.~\eqref{meangap}. We can therefore assume $\sigma>0$ throughout the remaining proof.

\medskip
\noindent
\textbf{Step 2. Geometric replacement.}
Consider the elements in the multiset
\begin{equation}
\label{eq:reduction 2}
S_J:= \left\{ \frac{y}{x} : (y,x) \in \Z^2+ \begin{pmatrix}
    0 \\ \vartheta(J)
\end{pmatrix}, \ \vartheta(J)+aJ \le x \le \vartheta(J)+bJ,\ \frac{c J}{\vartheta(J)} \le \frac{y}{x} \le \frac{d J}{\vartheta(J)} \right\},
\end{equation}
ordered as $\gamma_{J,1} \le \cdots \le \gamma_{J,M}$ (with $M=M_J$ the total number of elements), and set
\begin{equation}
G_{\vartheta,J}^{(2)}(\xi) = \frac{\left|\left\{ j\leq M-1 : (b-a)J\vartheta(J)\,(\gamma_{J,j+1} - \gamma_{J,j}) > \xi \right\}\right|}{M-1} .
\end{equation}

Let
\begin{equation}
C_J = \big\{(y,x) : \vartheta(J)+aJ \le x \le  \vartheta(J)+bJ,\ cJ \le y \le dJ\big\},
\end{equation}
\begin{equation}
\label{eq:def d J}
D_J = \bigg\{(y,x) :\ \vartheta(J)+aJ \le x \le \vartheta(J)+bJ,\ \frac{c J}{\vartheta(J)} \le \frac{y}{x} \le \frac{d J}{\vartheta(J)}  \bigg\}.
\end{equation}
Then $\beta_{J,j}$ corresponds to points in $\left(\Z^2+ \begin{pmatrix}
    0 \\ \vartheta(J)
\end{pmatrix} \right) \cap C_J$, while $\gamma_{J,j}$ corresponds to $\left(\Z^2+ \begin{pmatrix}
    0 \\ \vartheta(J)
\end{pmatrix} \right) \cap D_J$.
A geometric comparison shows
\begin{equation}
\left|\left(\Z^2+ \begin{pmatrix}
    0 \\ \vartheta(J)
\end{pmatrix} \right) \cap (C_J \triangle D_J)\right| = O(J^3 \vartheta(J)^{-1}) = o(J^2),
\end{equation}
while $M \sim (b-a)(d-c)J^2$. Hence the sequences differ in only $o(M)$ points, which implies 
\begin{equation}
\label{eq:red 2}
\liminf_{J\to\infty} G_{\vartheta,J}^{(2)}(\xi+\e) = \liminf_{J\to\infty} G_{\vartheta,J}^{(1)}(\xi)
\leq \limsup_{J\to\infty} G_{\vartheta,J}^{(1)}(\xi) = \limsup_{J\to\infty} G_{\vartheta,J}^{(2)}(\xi-\e).
\end{equation}

\medskip
\noindent
\textbf{Step 3. Translation to void statistics.}
By \eqref{eq:void to gap 1} and \eqref{eq:void to gap 2}, it suffices to determine the limiting void probabilities associated with the set $S_J$. More precisely, for $s\ge 0$, we consider
\begin{align}
\label{eq:red 3 1}
\frac{1}{d-c}
\left|
\left\{
\xi\in[c,d]:
\#
\left(
\left(
\frac{J\xi}{\vartheta(J)}
+
\frac{1}{(b-a)\vartheta(J)J}[0,s]
\right)
\cap S_J
\right)
=0
\right\}
\right|,
\end{align}
and study its limit as $J\to\infty$.

Arguing exactly as in the proof of Theorem~\ref{thm:main}, this quantity~\eqref{eq:red 3 1} can be rewritten as
\begin{align}
\label{eq:red 3 1 0}
\mu_J
\left\{
(A,w)\widetilde{\Gamma}_{n+1}:
\left(
\left[
\begin{pmatrix}
b-a & 0\\
0 & (b-a)^{-1}
\end{pmatrix},
\begin{pmatrix}
0\\
-a(b-a)^{-1}
\end{pmatrix}
\right]
(A\mathbb Z^2+w)
\right)
\cap
\mathcal Z([0,s],Q)
=\emptyset
\right\},
\end{align}
where $\mathcal Z([0,s],Q)$ is defined as in \eqref{eq: def C A Q} with
\begin{equation}
Q=\vartheta(J),
\qquad
\Delta=J,
\end{equation}
and $\mu_J$ is the probability measure defined by
\begin{align}
\label{eq:red 3 3}
\mu_J(f)
=
\frac1{d-c}
\int_c^d
f\!\left(
\left[
\begin{pmatrix}
J&0\\
0&J^{-1}
\end{pmatrix}
\begin{pmatrix}
1&-J \vartheta(J)^{-1}\xi\\
0&1
\end{pmatrix},
\begin{pmatrix}
0\\
-\vartheta(J)J^{-1}
\end{pmatrix}
\right]
z_J
\right)
\,d\xi,
\end{align}
where
\begin{equation}
z_J=
\begin{pmatrix}
I_2,
\begin{pmatrix}
0\\
-\vartheta(J)
\end{pmatrix}
\end{pmatrix}
\widetilde{\Gamma}_{n+1}.
\end{equation}

Repeating the proof of Lemma~\ref{lem:reduction 1 1}, with the substitutions
\begin{equation}
(Q,\Delta,\varepsilon)
=
(\vartheta(J),J,J\vartheta(J)^{-1})
\end{equation}
and with $\lambda$ equal to the normalised Lebesgue measure on $[c,d]$, shows that every subsequential limit of $\mu_J$ is invariant under the subgroup
\begin{equation}
\widetilde U_2
=
\left\{
\left[
I_2,
\begin{pmatrix}
\zeta\\
0
\end{pmatrix}
\right]:
\zeta\in\mathbb R
\right\}.
\end{equation}

Combined with proof of Proposition~\ref{lem: reduction to X} and the convergence of $\pi_*(\mu_J)$ provided by Theorem~\ref{thm: main dynamical theorem}, this invariance implies that $\mu_J$ converges to
\begin{equation}
\mu_\infty
=
\begin{cases}
\mu_{\widetilde X_2},
&
\sigma=\infty,
\\[6pt]
\mu_{\lambda,\widetilde\Gamma_2}^{(\sigma)},
&
0<\sigma<\infty,
\end{cases}
\end{equation}
where $\lambda$ denotes the normalised Lebesgue measure on $[c,d]$ and $\mu_{\lambda,\widetilde\Gamma_2}^{(\sigma)}$ is defined in \eqref{eq:def mu sigma}.

Lemma~\ref{lem: main appendix} therefore implies that \eqref{eq:red 3 1 0} converges to
\begin{align}
\label{eq:red 3 4}
\mu_\infty
\left\{
(A,w)\widetilde{\Gamma}_{n+1}:
\left(
\left[
\begin{pmatrix}
b-a & 0\\
0 & (b-a)^{-1}
\end{pmatrix},
\begin{pmatrix}
0\\
-a(b-a)^{-1}
\end{pmatrix}
\right]
(A\mathbb Z^2+w)
\right)
\cap
\mathcal Z([0,s])
=\emptyset
\right\},
\end{align}
where $\mathcal Z([0,s])$ is defined in \eqref{eq: def C A}.

Finally, pushing forward $\mu_\infty$ under the map
\begin{equation}
z
\mapsto
\left[
\begin{pmatrix}
b-a&0\\
0&(b-a)^{-1}
\end{pmatrix},
\begin{pmatrix}
0\\
-a(b-a)^{-1}
\end{pmatrix}
\right]
z,
\end{equation}
shows that the limiting void probability equals
\begin{equation}
\begin{cases}
E(0,[0,s]),
&
\sigma=\infty,
\\[6pt]
E_{0,(b-a)\sigma[c,d]}(0,[0,s]),
&
0<\sigma<\infty,
\end{cases}
\end{equation}
where $E(0, \cA)$ and $E_{0,\cD}(0, \cA)$ are defined as in Theorem~\ref{thm:main}.

Using the correspondence between void probabilities and gap distributions given by \eqref{eq:void to gap 1} and \eqref{eq:void to gap 2}, together with the explicit computation carried out in the proof of Theorem~\ref{thm:main0}, we conclude that $G_{\vartheta,J}^{(2)}(\xi)$ converges for every $\xi>0$ to the right-hand side of \eqref{eq:thm:ABCZ22}. The theorem now follows from the reductions established in Steps~1 and~2.
\end{proof}

We remark that (i) if $\vartheta(J)\in\mathbb N$ for all sufficiently large $J$, or (ii) if
\begin{equation}
\frac{J^3}{\vartheta(J)} \to 0
\qquad (J\to\infty),
\end{equation}
then the gap statistics of the set $S_J$ defined in \eqref{eq:reduction 2} can be approximated directly by the gap statistics of $\mathcal F_{Q,\Delta}$, namely $P_M(\xi)$ defined in \eqref{eq:def PN}, with the choice
\begin{equation}
Q=\vartheta(J)+bJ,
\qquad
\Delta=(b-a)J,
\qquad
\varepsilon=\frac{J}{\vartheta(J)},
\qquad
\alpha=0,
\qquad
\mathcal D=[c,d].
\end{equation}
Consequently, in this regime Theorem~\ref{thm:ABCZ22} follows directly from Theorem~\ref{thm:main0}, without the need for the dynamical argument developed in Step~3.

\bibliography{Biblio}

@article {Str04,
    AUTHOR = {Str\"ombergsson, Andreas},
     TITLE = {On the uniform equidistribution of long closed horocycles},
   JOURNAL = {Duke Math. J.},
  FJOURNAL = {Duke Mathematical Journal},
    VOLUME = {123},
      YEAR = {2004},
    NUMBER = {3},
     PAGES = {507--547},
      ISSN = {0012-7094,1547-7398},
   MRCLASS = {11F72 (30F35 37A45 37D40)},
  MRNUMBER = {2068968},
MRREVIEWER = {Boris\ Hasselblatt},
       DOI = {10.1215/S0012-7094-04-12334-6},
       URL = {https://doi-org.bris.idm.oclc.org/10.1215/S0012-7094-04-12334-6},
}

@article {MozesShah,
    AUTHOR = {Mozes, Shahar and Shah, Nimish},
     TITLE = {On the space of ergodic invariant measures of unipotent flows},
   JOURNAL = {Ergodic Theory Dynam. Systems},
  FJOURNAL = {Ergodic Theory and Dynamical Systems},
    VOLUME = {15},
      YEAR = {1995},
    NUMBER = {1},
     PAGES = {149--159},
      ISSN = {0143-3857,1469-4417},
   MRCLASS = {58F11 (22E40 28D10)},
  MRNUMBER = {1314973},
MRREVIEWER = {Garrett\ Stuck},
       DOI = {10.1017/S0143385700008282},
       URL = {https://doi-org.bris.idm.oclc.org/10.1017/S0143385700008282},
}

@article {MarStroGAFA,
    AUTHOR = {Marklof, Jens and Str\"ombergsson, Andreas},
     TITLE = {The periodic {L}orentz gas in the {B}oltzmann-{G}rad limit:
              asymptotic estimates},
   JOURNAL = {Geom. Funct. Anal.},
  FJOURNAL = {Geometric and Functional Analysis},
    VOLUME = {21},
      YEAR = {2011},
    NUMBER = {3},
     PAGES = {560--647},
      ISSN = {1016-443X,1420-8970},
   MRCLASS = {37A60 (37A50 37D50 60J35)},
  MRNUMBER = {2810859},
MRREVIEWER = {Andr\'as\ Kr\'amli},
       DOI = {10.1007/s00039-011-0116-9},
       URL = {https://doi-org.bris.idm.oclc.org/10.1007/s00039-011-0116-9},
}

@article {MV2017,
    AUTHOR = {Marklof, Jens and Vinogradov, Ilya},
     TITLE = {Spherical averages in the space of marked lattices},
   JOURNAL = {Geom. Dedicata},
  FJOURNAL = {Geometriae Dedicata},
    VOLUME = {186},
      YEAR = {2017},
     PAGES = {75--102},
      ISSN = {0046-5755,1572-9168},
   MRCLASS = {37A17 (60B10 82D05)},
  MRNUMBER = {3602886},
MRREVIEWER = {Massimo\ Campanino},
       DOI = {10.1007/s10711-016-0180-2},
       URL = {https://doi.org/10.1007/s10711-016-0180-2},
}

@incollection {Marklof2007,
    AUTHOR = {Marklof, Jens},
     TITLE = {Distribution modulo one and {R}atner's theorem},
 BOOKTITLE = {Equidistribution in number theory, an introduction},
    SERIES = {NATO Sci. Ser. II Math. Phys. Chem.},
    VOLUME = {237},
     PAGES = {217--244},
 PUBLISHER = {Springer, Dordrecht},
      YEAR = {2007},
      ISBN = {978-1-4020-5403-7; 1-4020-5403-3},
   MRCLASS = {11K31 (11J71 11K06 28D15 37A45)},
  MRNUMBER = {2290501},
MRREVIEWER = {Thomas\ Ward},
       DOI = {10.1007/978-1-4020-5404-4\_11},
       URL = {https://doi.org/10.1007/978-1-4020-5404-4_11},
}

@article {MS2017,
    AUTHOR = {Marklof, Jens and Str\"ombergsson, Andreas},
     TITLE = {The three gap theorem and the space of lattices},
   JOURNAL = {Amer. Math. Monthly},
  FJOURNAL = {American Mathematical Monthly},
    VOLUME = {124},
      YEAR = {2017},
    NUMBER = {8},
     PAGES = {741--745},
      ISSN = {0002-9890,1930-0972},
   MRCLASS = {11K06 (11H06 52C05)},
  MRNUMBER = {3706822},
       DOI = {10.4169/amer.math.monthly.124.8.741},
       URL = {https://doi.org/10.4169/amer.math.monthly.124.8.741},
}

@article {Marklof2021,
    AUTHOR = {Marklof, Jens},
     TITLE = {Random lattices in the wild: from {P}\'olya's orchard to
              quantum oscillators},
   JOURNAL = {Lond. Math. Soc. Newsl.},
  FJOURNAL = {London Mathematical Society. Newsletter},
    NUMBER = {493},
      YEAR = {2021},
     PAGES = {42--49},
      ISSN = {2516-3841,2516-385X},
   MRCLASS = {60G55},
  MRNUMBER = {4542360},
}

@article {Dahlqvist1997,
    AUTHOR = {Dahlqvist, Per},
     TITLE = {The {L}yapunov exponent in the {S}inai billiard in the small
              scatterer limit},
   JOURNAL = {Nonlinearity},
  FJOURNAL = {Nonlinearity},
    VOLUME = {10},
      YEAR = {1997},
    NUMBER = {1},
     PAGES = {159--173},
      ISSN = {0951-7715,1361-6544},
   MRCLASS = {58F11 (82C05)},
  MRNUMBER = {1430746},
MRREVIEWER = {Nikolai\ Chernov},
       DOI = {10.1088/0951-7715/10/1/011},
       URL = {https://doi.org/10.1088/0951-7715/10/1/011},
}

@article {BZ2007,
    AUTHOR = {Boca, Florin P. and Zaharescu, Alexandru},
     TITLE = {The distribution of the free path lengths in the periodic
              two-dimensional {L}orentz gas in the small-scatterer limit},
   JOURNAL = {Comm. Math. Phys.},
  FJOURNAL = {Communications in Mathematical Physics},
    VOLUME = {269},
      YEAR = {2007},
    NUMBER = {2},
     PAGES = {425--471},
      ISSN = {0010-3616,1432-0916},
   MRCLASS = {37A25 (11B57 37A60 37D50 82C05 82C40)},
  MRNUMBER = {2274553},
MRREVIEWER = {Nikolai\ Chernov},
       DOI = {10.1007/s00220-006-0137-7},
       URL = {https://doi.org/10.1007/s00220-006-0137-7},
}

@article {BCZ2001,
    AUTHOR = {Boca, Florin P. and Cobeli, Cristian and Zaharescu, Alexandru},
     TITLE = {A conjecture of {R}. {R}.\ {H}all on {F}arey points},
   JOURNAL = {J. Reine Angew. Math.},
  FJOURNAL = {Journal f\"ur die Reine und Angewandte Mathematik. [Crelle's
              Journal]},
    VOLUME = {535},
      YEAR = {2001},
     PAGES = {207--236},
      ISSN = {0075-4102,1435-5345},
   MRCLASS = {11N37 (11B57)},
  MRNUMBER = {1837099},
MRREVIEWER = {Dmitry\ Y.\ Kleinbock},
       DOI = {10.1515/crll.2001.049},
       URL = {https://doi.org/10.1515/crll.2001.049},
}

@article {PSZ2016,
    AUTHOR = {Polanco, Gerem\'ias and Schultz, Daniel and Zaharescu,
              Alexandru},
     TITLE = {Continuous distributions arising from the three gap theorem},
   JOURNAL = {Int. J. Number Theory},
  FJOURNAL = {International Journal of Number Theory},
    VOLUME = {12},
      YEAR = {2016},
    NUMBER = {7},
     PAGES = {1743--1764},
      ISSN = {1793-0421,1793-7310},
   MRCLASS = {11B57 (11K36)},
  MRNUMBER = {3544409},
MRREVIEWER = {Robert\ F.\ Tichy},
       DOI = {10.1142/S1793042116501074},
       URL = {https://doi.org/10.1142/S1793042116501074},
}

@article {Greenman1996,
    AUTHOR = {Greenman, C. D.},
     TITLE = {The generic spacing distribution of the two-dimensional
              harmonic oscillator},
   JOURNAL = {J. Phys. A},
  FJOURNAL = {Journal of Physics. A. Mathematical and General},
    VOLUME = {29},
      YEAR = {1996},
    NUMBER = {14},
     PAGES = {4065--4081},
      ISSN = {0305-4470,1751-8121},
   MRCLASS = {81Q50 (11Z05)},
  MRNUMBER = {1406922},
MRREVIEWER = {P\'eter\ P\'al\ L\'evay},
       DOI = {10.1088/0305-4470/29/14/028},
       URL = {https://doi.org/10.1088/0305-4470/29/14/028},
}

@article {KZ1997,
    AUTHOR = {Kargaev, Pavel and Zhigljavsky, Anatoly},
     TITLE = {Asymptotic distribution of the distance function to the
              {F}arey points},
   JOURNAL = {J. Number Theory},
  FJOURNAL = {Journal of Number Theory},
    VOLUME = {65},
      YEAR = {1997},
    NUMBER = {1},
     PAGES = {130--149},
      ISSN = {0022-314X,1096-1658},
   MRCLASS = {11J83 (11K60)},
  MRNUMBER = {1458209},
MRREVIEWER = {G\'erald\ Tenenbaum},
       DOI = {10.1006/jnth.1997.2110},
       URL = {https://doi.org/10.1006/jnth.1997.2110},
}

@article {Hall1970,
    AUTHOR = {Hall, R. R.},
     TITLE = {A note on {F}arey series},
   JOURNAL = {J. London Math. Soc. (2)},
  FJOURNAL = {Journal of the London Mathematical Society. Second Series},
    VOLUME = {2},
      YEAR = {1970},
     PAGES = {139--148},
      ISSN = {0024-6107,1469-7750},
   MRCLASS = {10.07},
  MRNUMBER = {253978},
MRREVIEWER = {H.\ London},
       DOI = {10.1112/jlms/s2-2.1.139},
       URL = {https://doi.org/10.1112/jlms/s2-2.1.139},
}

@article {BZ2005,
    AUTHOR = {Boca, Florin P. and Zaharescu, Alexandru},
     TITLE = {The correlations of {F}arey fractions},
   JOURNAL = {J. London Math. Soc. (2)},
  FJOURNAL = {Journal of the London Mathematical Society. Second Series},
    VOLUME = {72},
      YEAR = {2005},
    NUMBER = {1},
     PAGES = {25--39},
      ISSN = {0024-6107,1469-7750},
   MRCLASS = {11J71 (11B57 11K36)},
  MRNUMBER = {2145726},
MRREVIEWER = {Dmitry\ Y.\ Kleinbock},
       DOI = {10.1112/S0024610705006629},
       URL = {https://doi.org/10.1112/S0024610705006629},
}

@article {ABCZ,
    AUTHOR = {Anderson, Jack and Boca, Florin P. and Cobeli, Cristian and
              Zaharescu, Alexandru},
     TITLE = {Distribution of angles to lattice points seen from a fast
              moving observer},
   JOURNAL = {Res. Number Theory},
  FJOURNAL = {Research in Number Theory},
    VOLUME = {10},
      YEAR = {2024},
    NUMBER = {3},
     PAGES = {Paper No. 62, 31},
      ISSN = {2522-0160,2363-9555},
   MRCLASS = {11P21 (11B05 11K06)},
  MRNUMBER = {4761452},
MRREVIEWER = {Donald\ Jason\ Gibson},
       DOI = {10.1007/s40993-024-00548-z},
       URL = {https://doi.org/10.1007/s40993-024-00548-z},
}

@incollection {Marklof2013,
    AUTHOR = {Marklof, Jens},
     TITLE = {Fine-scale statistics for the multidimensional {F}arey
              sequence},
 BOOKTITLE = {Limit theorems in probability, statistics and number theory},
    SERIES = {Springer Proc. Math. Stat.},
    VOLUME = {42},
     PAGES = {49--57},
 PUBLISHER = {Springer, Heidelberg},
      YEAR = {2013},
      ISBN = {978-3-642-36067-1; 978-3-642-36068-8},
   MRCLASS = {11B57 (11H06 37D40)},
  MRNUMBER = {3079137},
MRREVIEWER = {Dieter\ H.\ Mayer},
       DOI = {10.1007/978-3-642-36068-8\_3},
       URL = {https://doi.org/10.1007/978-3-642-36068-8_3},
}

@article {MarStro,
    AUTHOR = {Marklof, Jens and Str\"{o}mbergsson, Andreas},
     TITLE = {The distribution of free path lengths in the periodic
              {L}orentz gas and related lattice point problems},
   JOURNAL = {Ann. of Math. (2)},
  FJOURNAL = {Annals of Mathematics. Second Series},
    VOLUME = {172},
      YEAR = {2010},
    NUMBER = {3},
     PAGES = {1949--2033},
      ISSN = {0003-486X},
   MRCLASS = {37D50 (37A17 37A60 60F17 82C41)},
  MRNUMBER = {2726104},
MRREVIEWER = {Nikolai Chernov},
       DOI = {10.4007/annals.2010.172.1949},
       URL = {https://doi.org/10.4007/annals.2010.172.1949},
}

@incollection {DM93,
    AUTHOR = {Dani, S. G. and Margulis, G. A.},
     TITLE = {Limit distributions of orbits of unipotent flows and values of
              quadratic forms},
 BOOKTITLE = {I. {M}. {G}el\cprime fand {S}eminar},
    SERIES = {Adv. Soviet Math.},
    VOLUME = {16, Part 1},
     PAGES = {91--137},
 PUBLISHER = {Amer. Math. Soc., Providence, RI},
      YEAR = {1993},
      ISBN = {0-8218-4118-1},
   MRCLASS = {22E40 (11H55 58F11)},
  MRNUMBER = {1237827},
MRREVIEWER = {Nimish\ A.\ Shah},
}

@article {Ratner91A,
    AUTHOR = {Ratner, Marina},
     TITLE = {Raghunathan's topological conjecture and distributions of
              unipotent flows},
   JOURNAL = {Duke Math. J.},
  FJOURNAL = {Duke Mathematical Journal},
    VOLUME = {63},
      YEAR = {1991},
    NUMBER = {1},
     PAGES = {235--280},
      ISSN = {0012-7094,1547-7398},
   MRCLASS = {22E40 (22D40 28D10)},
  MRNUMBER = {1106945},
MRREVIEWER = {Gopal\ Prasad},
       DOI = {10.1215/S0012-7094-91-06311-8},
       URL = {https://doi.org/10.1215/S0012-7094-91-06311-8},
}

@incollection {KM1,
    AUTHOR = {Kleinbock, D. Y. and Margulis, G. A.},
     TITLE = {On effective equidistribution of expanding translates of
              certain orbits in the space of lattices},
 BOOKTITLE = {Number theory, analysis and geometry},
     PAGES = {385--396},
 PUBLISHER = {Springer, New York},
      YEAR = {2012},
   MRCLASS = {37A17 (11J83 37A25)},
  MRNUMBER = {2867926},
MRREVIEWER = {Thomas Ward},
       DOI = {10.1007/978-1-4614-1260-1\_18},
       URL = {https://doi.org/10.1007/978-1-4614-1260-1_18},
}

@book {Margulis,
    AUTHOR = {Margulis, Grigoriy A.},
     TITLE = {On some aspects of the theory of {A}nosov systems},
    SERIES = {Springer Monographs in Mathematics},
      NOTE = {With a survey by Richard Sharp: Periodic orbits of hyperbolic
              flows,
              Translated from the Russian by Valentina Vladimirovna
              Szulikowska},
 PUBLISHER = {Springer-Verlag, Berlin},
      YEAR = {2004},
     PAGES = {vi+139},
      ISBN = {3-540-40121-0},
   MRCLASS = {37D20 (37C27 37C30 37C40)},
  MRNUMBER = {2035655},
MRREVIEWER = {Boris\ Hasselblatt},
       DOI = {10.1007/978-3-662-09070-1},
       URL = {https://doi.org/10.1007/978-3-662-09070-1},
}

@incollection {Hooley4,
    AUTHOR = {Hooley, Christopher},
     TITLE = {On the intervals between consecutive terms of sequences},
 BOOKTITLE = {Analytic number theory ({P}roc. {S}ympos. {P}ure {M}ath.,
              {V}ol. {XXIV}, {S}t. {L}ouis {U}niv., {S}t. {L}ouis, {M}o.,
              1972)},
    SERIES = {Proc. Sympos. Pure Math.},
    VOLUME = {Vol. XXIV},
     PAGES = {129--140},
 PUBLISHER = {Amer. Math. Soc., Providence, RI},
      YEAR = {1973},
   MRCLASS = {10L99},
  MRNUMBER = {384742},
MRREVIEWER = {S.\ L. G. Choi},
}

@article {Hooley2,
    AUTHOR = {Hooley, Christopher},
     TITLE = {On the difference between consecutive numbers prime to {$n$}.
              {II}},
   JOURNAL = {Publ. Math. Debrecen},
  FJOURNAL = {Publicationes Mathematicae Debrecen},
    VOLUME = {12},
      YEAR = {1965},
     PAGES = {39--49},
      ISSN = {0033-3883,2064-2849},
   MRCLASS = {10.42},
  MRNUMBER = {186641},
MRREVIEWER = {P.\ Erd\H os},
       DOI = {10.5486/pmd.1965.12.1-4.06},
       URL = {https://doi.org/10.5486/pmd.1965.12.1-4.06},
}

@article {Hooley3,
    AUTHOR = {Hooley, Christopher},
     TITLE = {On the difference between consecutive numbers prime to {$n$}.
              {III}},
   JOURNAL = {Math. Z.},
  FJOURNAL = {Mathematische Zeitschrift},
    VOLUME = {90},
      YEAR = {1965},
     PAGES = {355--364},
      ISSN = {0025-5874,1432-1823},
   MRCLASS = {10.42},
  MRNUMBER = {183702},
MRREVIEWER = {P.\ Erd\H os},
       DOI = {10.1007/BF01112354},
       URL = {https://doi.org/10.1007/BF01112354},
}

@article {Hooley1,
    AUTHOR = {Hooley, Christopher},
     TITLE = {On the difference of consecutive numbers prime to {$n$}},
   JOURNAL = {Acta Arith.},
  FJOURNAL = {Polska Akademia Nauk. Instytut Matematyczny. Acta Arithmetica},
    VOLUME = {8},
      YEAR = {1962/63},
     PAGES = {343--347},
      ISSN = {0065-1036},
   MRCLASS = {10.40},
  MRNUMBER = {155807},
MRREVIEWER = {P.\ Erd\H os},
       DOI = {10.4064/aa-8-3-343-347},
       URL = {https://doi.org/10.4064/aa-8-3-343-347},
}

@article {Shah96,
    AUTHOR = {Shah, Nimish A.},
     TITLE = {Limit distributions of expanding translates of certain orbits
              on homogeneous spaces},
   JOURNAL = {Proc. Indian Acad. Sci. Math. Sci.},
  FJOURNAL = {Indian Academy of Sciences. Proceedings. Mathematical
              Sciences},
    VOLUME = {106},
      YEAR = {1996},
    NUMBER = {2},
     PAGES = {105--125},
      ISSN = {0253-4142,0973-7685},
   MRCLASS = {22E40},
  MRNUMBER = {1403756},
MRREVIEWER = {Dave\ Witte\ Morris},
       DOI = {10.1007/BF02837164},
       URL = {https://doi.org/10.1007/BF02837164},
}

@article{ShahYang,
title = {Limit distributions of expanding translates of shrinking submanifolds and non-improvability of Dirichlet's approximation theorem},
journal = {Journal of Modern Dynamics},
volume = {19},
number = {0},
pages = {947-965},
year = {2023},
issn = {1930-5311},
doi = {10.3934/jmd.2023027},
url = {https://www.aimsciences.org/article/id/655dc43ac8d8053960399ebb},
author = {Nimish A. Shah and Pengyu Yang}
}

@article{Hejhal,
  author  = {Hejhal, Dennis A.},
  title   = {On the uniform equidistribution of long closed horocycles},
  journal = {Asian Journal of Mathematics},
  volume  = {4},
  number  = {4},
  pages   = {839--853},
  year    = {2000}
}

\end{document}